\documentclass[twoside,leqno,10pt, A4]{amsart}
\usepackage{amsfonts}
\usepackage{amsmath}
\usepackage{amscd}
\usepackage{amssymb}
\usepackage{amsthm}
\usepackage{amsrefs}
\usepackage{latexsym}
\usepackage{mathrsfs}
\usepackage{bbm}
\usepackage{amscd}
\usepackage{amssymb}
\usepackage{amsthm}
\usepackage{amsrefs}
\usepackage{latexsym}
\usepackage{mathrsfs}
\usepackage{bbm}
\usepackage{enumerate}
\usepackage{graphicx}
\allowdisplaybreaks
\usepackage{color}
\begin{document}

\newtheorem{theorem}[subsection]{Theorem}
\newtheorem{proposition}[subsection]{Proposition}
\newtheorem{lemma}[subsection]{Lemma}
\newtheorem{corollary}[subsection]{Corollary}
\newtheorem{conjecture}[subsection]{Conjecture}
\newtheorem{prop}[subsection]{Proposition}
\newtheorem{defin}[subsection]{Definition}

\numberwithin{equation}{section}
\newcommand{\mr}{\ensuremath{\mathbb R}}
\newcommand{\mc}{\ensuremath{\mathbb C}}
\newcommand{\N}{\mathbb{N}}
\newcommand{\dif}{\mathrm{d}}
\newcommand{\intz}{\mathbb{Z}}
\newcommand{\ratq}{\mathbb{Q}}
\newcommand{\natn}{\mathbb{N}}
\newcommand{\comc}{\mathbb{C}}
\newcommand{\rear}{\mathbb{R}}
\newcommand{\prip}{\mathbb{P}}
\newcommand{\uph}{\mathbb{H}}
\newcommand{\fief}{\mathbb{F}}
\newcommand{\majorarc}{\mathfrak{M}}
\newcommand{\minorarc}{\mathfrak{m}}
\newcommand{\sings}{\mathfrak{S}}
\newcommand{\fA}{\ensuremath{\mathfrak A}}
\newcommand{\mn}{\ensuremath{\mathbb N}}
\newcommand{\mq}{\ensuremath{\mathbb Q}}
\newcommand{\half}{\tfrac{1}{2}}
\newcommand{\f}{f\times \chi}
\newcommand{\summ}{\mathop{{\sum}^{\star}}}
\newcommand{\chiq}{\chi \bmod q}
\newcommand{\chidb}{\chi \bmod db}
\newcommand{\chid}{\chi \bmod d}
\newcommand{\sym}{\text{sym}^2}
\newcommand{\hhalf}{\tfrac{1}{2}}
\newcommand{\sumstar}{\sideset{}{^*}\sum}
\newcommand{\sumprime}{\sideset{}{'}\sum}
\newcommand{\sumprimeprime}{\sideset{}{''}\sum}
\newcommand{\sumflat}{\sideset{}{^\flat}\sum}
\newcommand{\shortmod}{\ensuremath{\negthickspace \negthickspace \negthickspace \pmod}}
\newcommand{\V}{V\left(\frac{nm}{q^2}\right)}
\newcommand{\sumi}{\mathop{{\sum}^{\dagger}}}
\newcommand{\mz}{\ensuremath{\mathbb Z}}
\newcommand{\leg}[2]{\left(\frac{#1}{#2}\right)}
\newcommand{\muK}{\mu_{\omega}}
\newcommand{\thalf}{\tfrac12}
\newcommand{\lp}{\left(}
\newcommand{\rp}{\right)}
\newcommand{\Lam}{\Lambda_{[i]}}
\newcommand{\lam}{\lambda}
\newcommand{\af}{\mathfrak{a}}
\newcommand{\sw}{S_{[i]}(X,Y;\Phi,\Psi)}
\newcommand{\lz}{\left(}
\newcommand{\pz}{\right)}
\newcommand{\bfrac}[2]{\lz\frac{#1}{#2}\pz}
\newcommand{\odd}{\mathrm{\ primary}}
\newcommand{\even}{\text{ even}}
\newcommand{\res}{\mathrm{Res}}
\newcommand{\sumn}{\sumstar_{(c,1+i)=1}  w\left( \frac {N(c)}X \right)}
\newcommand{\lab}{\left|}
\newcommand{\rab}{\right|}
\newcommand{\Go}{\Gamma_{o}}
\newcommand{\Ge}{\Gamma_{e}}
\newcommand{\M}{\widehat}
\def\su#1{\sum_{\substack{#1}}}
\newcommand{\Echar}{\mathbb{E}^{\text{char}}}
\newcommand{\E}{\mathbb{E}}
\newcommand{\p}{\mathbb{P}}

\theoremstyle{plain}
\newtheorem{conj}{Conjecture}
\newtheorem{remark}[subsection]{Remark}

\newcommand{\pfrac}[2]{\left(\frac{#1}{#2}\right)}
\newcommand{\pmfrac}[2]{\left(\mfrac{#1}{#2}\right)}
\newcommand{\ptfrac}[2]{\left(\tfrac{#1}{#2}\right)}
\newcommand{\pMatrix}[4]{\left(\begin{matrix}#1 & #2 \\ #3 & #4\end{matrix}\right)}
\newcommand{\ppMatrix}[4]{\left(\!\pMatrix{#1}{#2}{#3}{#4}\!\right)}
\renewcommand{\pmatrix}[4]{\left(\begin{smallmatrix}#1 & #2 \\ #3 & #4\end{smallmatrix}\right)}
\def\en{{\mathbf{\,e}}_n}

\newcommand{\ppmod}[1]{\hspace{-0.15cm}\pmod{#1}}
\newcommand{\ccom}[1]{{\color{red}{Chantal: #1}} }
\newcommand{\acom}[1]{{\color{blue}{Alia: #1}} }
\newcommand{\alexcom}[1]{{\color{green}{Alex: #1}} }
\newcommand{\hcom}[1]{{\color{brown}{Hua: #1}} }

\makeatletter
\def\widebreve{\mathpalette\wide@breve}
\def\wide@breve#1#2{\sbox\z@{$#1#2$}%
     \mathop{\vbox{\m@th\ialign{##\crcr
\kern0.08em\brevefill#1{0.8\wd\z@}\crcr\noalign{\nointerlineskip}%
                    $\hss#1#2\hss$\crcr}}}\limits}
\def\brevefill#1#2{$\m@th\sbox\tw@{$#1($}%
  \hss\resizebox{#2}{\wd\tw@}{\rotatebox[origin=c]{90}{\upshape(}}\hss$}
\makeatletter

\title[High moments of random multiplicative functions with Fourier coefficients of modular forms]{High moments of random multiplicative functions twisted by Fourier coefficients of modular forms}

\author[P. Gao]{Peng Gao}
\address{School of Mathematical Sciences, Beihang University, Beijing 100191, China}
\email{penggao@buaa.edu.cn}

\author[L. Zhao]{Liangyi Zhao}
\address{School of Mathematics and Statistics, University of New South Wales, Sydney NSW 2052, Australia}
\email{l.zhao@unsw.edu.au}

\begin{abstract}
  Let $\lambda(n)$ denote the Fourier coefficients of a fixed modular form and $h(n)$ a Steinhaus or Rademacher random multiplicative function.  In this paper, we determine, under the generalized Riemann hypothesis, the order of magnitude of $\E|\sum_{n \leq x} h(n)\lambda(n)|^{2q}$ up to factors of size $e^{O(q^2)}$, for all real $x, q$ with $1 \leq q \leq c\log x/\log\log x $ and $c>0$ a small constant. 
\end{abstract}

\maketitle

\noindent {\bf Mathematics Subject Classification (2010)}: 11N37, 11N56, 11K65  \newline

\noindent {\bf Keywords}:  Steinhaus random multiplicative function,  Rademacher random multiplicative function, modular $L$-functions, Fourier coefficients, high moments

\section{Introduction}\label{sec1}

  In a sequence of work \cites{Harper20, harper2020moments, Harper23}, A. J. Harper determined the order of magnitude of moments of sums of Steinhaus and Rademacher random multiplicative functions and extended his methods to arithmetic functions to show remarkably that the typical sizes of partial sums of character and zeta sums exhibit better than square-root cancellation.  We reserve the letter $p$ for a prime number throughout the paper.  Recall from \cite{Harper20} that a Steinhaus random multiplicative function $h(n)$ for all $n \in \N$ is defined to be $h(n) = \prod_{p^{a} \| n} h(p)^{a}$, where $(h(p))_{p \; \text{prime}}$ is a sequence of independent random variables, each distributed uniformly on the complex unit circle. Also, a Rademacher random multiplicative function $h(n)$ is defined to be supported on square-free integers $n$ only such that $h(n) = \prod_{p |n} h(p)$, where $(h(p))_{p \; \text{prime}}$ is a sequence of independent random variables taking values $\pm 1$ with probability $1/2$ each. \newline

Let $f$ be a fixed holomorphic Hecke eigenform of weight $\kappa \equiv 0 \pmod 4$ for the full modular group $SL_2 (\mathbb{Z})$, whose Fourier expansion of $f$ at infinity is given by
\[
f(z) = \sum_{n=1}^{\infty} \lambda (n) n^{(\kappa -1)/2} e(nz), \quad \mbox{where} \quad e(z) = \exp (2 \pi i z).
\]
In \cite{G&Zhao26-04}, the authors determined the order of magnitude of the low moments of sums of the form $\sum_{n \leq x} h(n)\lambda(n)$ where $h$ is a Steinhaus or a Rademacher random multiplicative function. The aim of this paper is to continue our work to determine all high moments of the above sum under the generalized Riemann hypothesis (GRH).  Let $\mathbb{E}$ denote the expectation throughout the paper. Our results are as follows.
\begin{theorem}
\label{upperboundsfixedmodmeanS}
Let $h$ be a Steinhaus random multiplicative function.  Then we have, under the truth of GRH and the notation as above, a small absolute constant $c > 0$ such that,
\begin{align*}
 \E |\sum_{n \leq x} h(n)\lambda(n)|^{2q} = \exp \left( -q^2\log q - q^2 \log\log(2q) + O(q^2) \right) x^{q} \log^{(q-1)^2}x,
\end{align*}
uniformly for all large $x$ and $1 \leq q \leq \frac{c\log x}{\log\log x}$.
\end{theorem}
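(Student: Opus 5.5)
We follow Harper's approach for high moments of Steinhaus random multiplicative functions, inserting GRH to control the twisted Euler products built from $\lambda(p)$. The plan is to reduce the moment to an expectation of an integral of random Euler products, and then compute that expectation.

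\textbf{Step 1: reduction to Euler products.} Write $S(x)=\sum_{n\le x}h(n)\lambda(n)$. We condition on the values $h(p)$ for $p\le x^{1/(Cq)}$, say, and split $n$ according to its largest prime factor $P(n)$. The small-prime and large-prime parts are then treated as in Harper's high-moment paper: a Burkholder/hypercontractive inequality for the large-prime martingale, combined with Parseval in the form
\begin{equation*}
\int_{\mr}\Bigl|\sum_{n\le e^t}a(n)\Bigr|^2 e^{-2t}\,dt=\frac{1}{2\pi}\int_{\mr}\Bigl|\frac{F(1/2+it)}{1/2+it}\Bigr|^2dt .
\end{equation*}
This should reduce the problem, both for the upper and the lower bound, to showing that
\begin{equation*}
\E\Bigl(\int_{-1/2}^{1/2}|F(1/2+\tfrac{1}{\log x}+it)|^2\,dt\Bigr)^{q}
\end{equation*}
is of size $e^{-q^2\log q-q^2\log\log(2q)+O(q^2)}\log^{(q-1)^2}x$, up to the normalisation by $x^q$. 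Here $F(s)=\prod_{p\le x}\bigl(1-h(p)\lambda(p)p^{-s}\bigr)^{-1}$, with the local factor suitably adjusted at $p^2$ using the Hecke relations. In the range $q\le c\log x/\log\log x$, only the range $|t|\lesssim 1$ matters after the usual truncation.

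\textbf{Step 2: computing the Euler-product moments.} For each $q$-tuple $t_1,\dots,t_q$ we expand $|F|^2$ into independent factors over primes. The expectation then equals
\begin{equation*}
\exp\Bigl(\sum_{p}\frac{|\lambda(p)|^2\,\bigl|\sum_j p^{-it_j}\bigr|^2}{p^{1+2/\log x}}+\text{error}\Bigr).
\end{equation*}
The main term here is governed by
\begin{equation*}
\sum_{p\le y}\frac{\lambda(p)^2\cos(t\log p)}{p}=\log\min\bigl(\log y,1/|t|\bigr)+O(1).
\end{equation*}
This formula follows from the prime number theorem for $L(s,\mathrm{sym}^2 f)$: since $\lambda(p)^2=1+\lambda(p^2)$, the contribution of $\lambda(p^2)$ is $O(1)$ uniformly in $|t|\le 1$. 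This is where GRH is needed. Under GRH for $L(s,\mathrm{sym}^2 f)$, the sums $\sum_{p\le y}\lambda(p^2)p^{-1-it}$ are bounded uniformly with good error terms, even over the short prime ranges $p\le x^{1/(Cq)}$ near the cutoff. Unconditionally we only have zero-free regions, which are insufficient when $q$ is as large as $\log x/\log\log x$.

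Once this is established, the Euler product moments are the same as in Harper's Steinhaus case up to factors $e^{O(q^2)}$. The $q$-fold integral over $t_j$ is then handled exactly as in Harper's high moments: the dominant contribution comes from all $t_j$ lying within $1/(q\log q)$-type scales. This produces the factor $e^{-q^2\log q-q^2\log\log(2q)}$ through Harper's Gaussian-type computation, and the factor $\log^{(q-1)^2}x$ from $\log^{q^2}x\cdot(\log x)^{-(2q-1)}$. For the lower bound we restrict the integral to this configuration and use positivity.

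\textbf{Main obstacle.} The main difficulty is the uniformity of the prime-sum asymptotics in Step 2. We need $\sum_p \lambda(p)^2p^{-1-\sigma-it}$ to within $O(1)$ error simultaneously for $\sigma\asymp 1/\log x$ and primes truncated at $x^{1/(Cq)}$, and we need similar bounds for the higher-order terms $\sum_p\lambda(p)^4/p^2$ and $\lambda(p^2)$-corrections. These terms contribute $O(q^2)$ only if the constants are uniform. We would derive these bounds from the GRH explicit formula for $L(s,\mathrm{sym}^2f)$ and $L(s,\mathrm{sym}^4 f)$, combined with Deligne's bound $|\lambda(p)|\le 2$, to keep all errors within $e^{O(q^2)}$.
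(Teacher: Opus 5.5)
There is a genuine gap, and it sits exactly where the factor $e^{-q^2\log q-q^2\log\log(2q)}$ is supposed to come from. You reduce both bounds to the Euler-product moment at abscissa $\tfrac12+\tfrac{1}{\log x}$, but at that abscissa the moment is too large. Run the paper's lower-bound argument there: restrict to $|t|\le 1/(2\log x)$, use $\E(\cdot)^q\ge\E\exp(2q\cdot\text{average of }\log|F|)$, and compute prime by prime. This gives $\E(\int_{-1/2}^{1/2}|F_f(\tfrac12+\tfrac{1}{\log x}+it)|^2\,dt)^q\ge e^{O(q^2)}(\log x)^{q^2-q+1}(\log 2q)^{-q^2}$. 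That exceeds what the theorem needs, after the normalisation $(x/\log x)^q$, by $e^{q^2\log q+O(q^2)}$. So your reduction cannot hold as a lower bound, since it would contradict the theorem's upper bound, and as an upper bound it loses $e^{q^2\log q}$.

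The missing idea is that the abscissa must be $\tfrac12+\sigma$ with $\sigma\asymp q/\log x$. In the upper bound this comes from Rankin's trick, $\mathbf{1}_{n\le z}\le (z/n)^{q/\log x}$, which costs only $e^{O(q)}$ in $\|\cdot\|_{2q}$; this is why Proposition \ref{propstupper} features $F_{k,f}(\tfrac12+\tfrac{q-k}{\log x}+it)$. In the lower bound the shift $4Vq/\log x$ in \eqref{hlambdalower1} is forced. The discarded range $z>x^{1/4}$ is damped only by $e^{-Vq/2}$ at the level of norms, and this must beat the $e^{O(q)}$ ratio between the norms at $2Vq/\log x$ and $4Vq/\log x$. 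With a shift $\asymp 1/\log x$ the damping is only $e^{-O(1)}$, and the subtracted term can swallow the main term. With $\sigma\asymp q/\log x$ one has $\sum_p\lambda^2(p)p^{-1-2\sigma}=\log(\log x/q)+O(1)$, so $e^{-q^2\log q}$ is simply $(\log x/q)^{q^2}$ against $(\log x)^{q^2}$. No Gaussian computation over the $t_j$ produces it. In the paper the $t$-variables are discretised at scale $1/\mathcal{X}\approx q/\log x$, where $\mathcal{X}=\min\{\log x/e^{k+1},1/|\sigma|\}$, not at scale $1/(q\log q)$.

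The $e^{-q^2\log\log(2q)}$ factor has an equally concrete source that your Step 2 misses. The formula $\E\prod_j|F(\tfrac12+\sigma+it_j)|^2=\exp(\sum_p\lambda(p)^2|\sum_jp^{-it_j}|^2p^{-1-2\sigma}+\text{error})$ does not hold with error $O(q^2)$. For $p\le 100q^2$ the quadratic Taylor approximation breaks down and the local expectation is at most $e^{O(q/\sqrt p)}$. These primes therefore contribute $e^{O(q^2/\log q)}$ instead of $\exp(q^2\sum_{p\le 100q^2}\lambda^2(p)/p)=e^{O(q^2)}(\log 2q)^{q^2}$. This is why Proposition \ref{prop1} is only applied to primes above $100(1+\max\{\alpha^2,\beta^2\})$, with small primes bounded trivially.

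There are further problems:
\begin{enumerate}
\item Expanding $(\int|F|^2)^q$ into a $q$-fold integral needs $q\in\mathbb{N}$, while the theorem is for all real $q\ge1$. Interpolating between integers loses a power of $\log x$ in the upper bound and gives nothing for the lower bound. The paper uses H\"older's inequality for $q\ge2$ and, for $1<q<2$, the $G_d,H_d$ decomposition together with Doob's maximal inequality for a submartingale.
\item Your placement of GRH is off. The sums $\sum_p\lambda^2(p)\cos(t\log p)/p$ are handled unconditionally via the prime number theorem for $L(s,\operatorname{sym}^2 f)$, since errors $O(q^2)$ are affordable. No $\operatorname{sym}^4$ input is needed, because $\sum_p\lambda(p)^4/p^2$ converges by Deligne's bound. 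GRH is needed instead in the reduction step you delegate to Harper: for the short-interval sums $\sum_{x/t<n\le x/m}\lambda^2(n)$ in \eqref{uppersmoothingdisplay} and $\sum_{x/(r+1)<p\le x/r}\lambda^2(p)\log p$ in \eqref{shortprimesum}.
\item Proposition \ref{propstupper} is only available for $q\le\log^{0.05}x$. For $\log\log x\le q\le c\log x/\log\log x$ the paper proves the upper bound directly. It splits off the $q^2$-smooth part of $n$ and applies Proposition \ref{prop3} with Rankin's trick and the factorisation \eqref{Fsdecomp}.
\end{enumerate}
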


\begin{theorem}
\label{upperboundsfixedmodmeanR}
Let $h$ be a Rademacher random multiplicative function and $q_0 = (1 + \sqrt{5})/2$.  Under the truth of GRH and  the notation as above. There exists a small absolute constant $c > 0$ such that, 
\begin{align*}
 \E |\sum_{n \leq x} & h(n)\lambda(n)|^{2q} \\
 & = \exp \left( -2q^2\log q - 2q^2\log\log(2q) + O(q^2) \right) \left(1 + \min\left\{\log\log x , \frac{1}{|q - q_0 |}\right\}\right) x^{q} \log^{\max\{(q-1)^2, q(2q-3)\}}x,
\end{align*}
uniformly for all large $x$ and real $1 \leq q \leq \frac{c\log x}{\log\log x}$.
\end{theorem}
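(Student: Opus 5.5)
The plan is to follow Harper's method for high moments, as in \cite{harper2020moments}, where the case $\lambda\equiv 1$ is treated, and adapt it to the twist $\lambda(n)$ as in our low-moment work \cite{G&Zhao26-04}. Set $S(x)=\sum_{n\le x}h(n)\lambda(n)$. The first step is the Parseval-type reduction. Write $P(x)$ for the largest prime factor, condition on the values $h(p)$ with $p\le x^{1/\log\log x}$ (or a comparable threshold), and split $S(x)$ according to the large prime factors. This should give, up to factors $e^{O(q^2)}$,
\[
\E|S(x)|^{2q}\asymp \E\Big(\frac{1}{\log x}\int_{-1/2}^{1/2}\Big|F\big(\tfrac12+\tfrac{1}{\log x}+it\big)\Big|^2\,dt\Big)^{q} x^q,
\]
where $F(s)=\prod_{p\le x}(1-h(p)\lambda(p)p^{-s})^{-1}$ in the Steinhaus case. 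In the Rademacher case $F(s)=\prod_{p\le x}(1+h(p)\lambda(p)p^{-s})$.

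The upper bound then comes from Hölder/Minkowski, since $q\ge 1$. Since $q\ge1$, Minkowski's inequality bounds the $q$-th moment of the integral by
\[
\Big(\int\big(\E|F(\tfrac12+\tfrac1{\log x}+it)|^{2q}\big)^{1/q}dt\Big)^q .
\]
The Euler product expectations are computed prime by prime. For Steinhaus one gets
\[
\E|F|^{2q}=\exp\Big(q^2\sum_{p}\frac{\lambda(p)^2}{p^{1+2/\log x}}+O(\cdot)\Big).
\]
Here GRH for $L(s,\sym f)$, i.e. the prime number theorem $\sum_{p\le y}\lambda(p)^2\log p/p = \log y+O(\log\log y)$ together with the Rankin–Selberg results, gives $\sum_{y<p\le x}\lambda(p)^2/p=\log(\log x/\log y)+O(1)$. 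Primes $p\le q^2$ contribute the $\exp(-q^2\log q-q^2\log\log 2q)$ factor via the saddle-point estimate $\E|1-h(p)\lambda(p)p^{-s}|^{-2q}\approx I_0$-Bessel asymptotics. This uses Deligne's bound $|\lambda(p)|\le2$ and the average of $\lambda(p)^2$ over $p\le q^2$. GRH is needed because the uniformity in $q$ up to $\log x/\log\log x$ requires a prime number theorem for $\lambda(p)^2$ and $\lambda(p)^4$ with error terms good down to $y\approx q^2$, which is small relative to $x$.

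For the Rademacher case one computes instead $\E F(\sigma+it)^{q}\overline{F(\sigma+it)}^{q}$. This picks up an additional factor
\[
\exp\Big(q^2\sum_p \lambda(p)^2\cos(2t\log p)/p\Big)\approx |L(1+2it,\sym f)\,\zeta(1+2it)|^{q^2}\asymp|\zeta(1+2it)|^{q^2}
\]
near $t=0$, because $\lambda(p)^2=1+\lambda(p^2)$. The $t$-integral therefore has a singular piece of size $(\log x)^{q^2}\cdot(1/\log x)$ from $|t|\le1/\log x$, and integrating $|t|^{-q^2}$ against the $q$-th power structure gives the exponent $\max\{(q-1)^2,q(2q-3)\}$. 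The transition at $q_0$, where $(q-1)^2=q(2q-3)$ i.e. $q^2-q-1=0$, produces the factor $1+\min\{\log\log x,1/|q-q_0|\}$ from a geometric sum over dyadic ranges of $|t|$. The factor $2q^2$ in the exponent appears because $F$ and $\bar F$ are both real-valued at $t=0$ and the Gaussian variance doubles.

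For the lower bound, I would restrict to the event that the partial Euler products over dyadic ranges of primes behave typically under the tilted measure (Harper's "good event" argument). I would apply Hölder in reverse, $\E(\int|F|^2)^q\ge(\int\E|F|^2\mathbf 1_{\rm good})^q$-type bounds after changing measure. This is done using the random walk/Girsanov-type comparison, with the normal approximation justified by the moment computations above.

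The main obstacle is the lower bound in the Rademacher case near $q_0$ and for $q$ as large as $c\log x/\log\log x$. There one must show that both the $t\approx0$ singular contribution and the bulk contribution are genuinely attained, with the correct secondary factor, while controlling $\lambda(p)$-dependent fluctuations uniformly via GRH.
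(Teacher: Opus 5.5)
Your overall plan of following Harper matches the paper's, but two central steps do not work as stated. The first is the reduction with the fixed shift $\sigma=1/\log x$: it is not valid up to $e^{O(q^2)}$ uniformly in $q$, and your explanation of the factor $e^{-2q^2\log q}$ is wrong. The primes $p\le 100q^2$ only account for the $\log\log(2q)$ part. If they behaved like larger primes they would contribute $\exp\bigl((2q^2-q)\sum_{p\le 100q^2}\lambda^2(p)/p\bigr)=(\log 2q)^{2q^2}e^{O(q^2)}$ by \eqref{merten1}. In fact they contribute only $e^{O(q^2/\log(2q))}$, trivially, so no power of $q$ arises there. Only Mertens-type estimates are used for these primes, not a GRH-strength prime number theorem for $\lambda(p)^2,\lambda(p)^4$. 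The factor $q^{-2q^2}$ instead comes from taking a shift of size $\asymp q/\log x$. The paper uses $\sigma=(q-k)/\log x$ in the upper bound (Proposition \ref{propstupper}), and $\sigma=4Vq/\log x$ in the lower bound. Proposition \ref{propstupper} is obtained by splitting according to $P(n)\in(x^{e^{-(k+1)}},x^{e^{-k}}]$, conditioning, applying Proposition \ref{prop3}, and a smoothing step where GRH enters via \eqref{lambdasquareasympconj}. At $\sigma=1/\log x$, the Jensen argument of \eqref{radlowerstep1} with only the window $k=0$ already gives $\E\bigl(\int_{-1/2}^{1/2}|F(\tfrac12+\sigma+it)|^2\dif t\bigr)^q\ge e^{-O(q^2)}\log^{-q}x\,(\log x/\log 2q)^{2q^2-q}$. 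So your right-hand side exceeds the true size of $\E|S(x)|^{2q}$ by $e^{2q^2\log q-O(q^2)}$, and the claimed $\asymp$ fails once $q\to\infty$. For the upper bound when $\log\log x\le q\le c\log x/\log\log x$, the paper avoids Euler product integrals altogether. It uses Minkowski over the $q^2$-smooth part, Proposition \ref{prop3} with $d_{2\lceil q\rceil-1}$, Rankin's trick with exponent $1+q/\log x$, and $\zeta(1+q/\log x)\approx\log x/q$.

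The second gap is that Minkowski in $t$ loses a power of $\log x$ exactly where the theorem is delicate. Fix $q\in(1,2)$. By \eqref{Eprod1R} (where the coefficient of $\cos(2t\log p)$ is $q^2-q$, not $q^2$), $\E|F(\tfrac12+\tfrac{1}{\log x}+it)|^{2q}\asymp_q(\log x)^{q^2}(1+\min\{\log x,1/|t|\})^{q^2-q}$. Hence $\int_{-1/2}^{1/2}(\E|F|^{2q})^{1/q}\dif t\asymp_q(\log x)^q$, and your route only gives $\E|S(x)|^{2q}\ll_q x^q(\log x)^{q^2-q}$. This exceeds the truth by roughly $(\log x)^{\min\{q-1,\,q(2-q)\}}$; Minkowski happens to give the right power only for fixed $q>2$, where the peak at $t\approx 0$ dominates. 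The range $(1,2)$ contains $q_0$, and Minkowski can never produce the factor $1+\min\{\log\log x,1/|q-q_0|\}$, which comes from adding up the contributions of different $t$-ranges inside the $q$-th moment. What is missing is an analysis of the $q$-th moment of the integral itself:
\begin{itemize}
\item discretize at spacing $1/\mathcal{X}$ and, for $q\ge2$, reduce by H\"older to two-point correlations $\E|F(s)|^2|F(s+im/\mathcal{X})|^{2(q-1)}$;
\item for $1<q<2$, use Harper's $G_d,H_d$ decomposition under the tilted measure together with Doob's inequality (Proposition \ref{prop4});
\item since the Rademacher law is not invariant under $h(n)\mapsto h(n)n^{it}$, use the $\max_N(|N|+1)^{-1/8}$ form of the reduction and the bound \eqref{EhalfqNlargeR}.
\end{itemize}
Your lower bound is likewise insufficient as written. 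The inequality $(\int\E|F|^2\mathbf{1}_{\mathrm{good}}\dif t)^q\le(\E\int|F|^2\dif t)^q\asymp(\log x)^q$ only recovers $\E|S(x)|^{2q}\gg x^q$, missing the whole factor $\log^{\max\{(q-1)^2,q(2q-3)\}}x$. The paper proceeds differently:
\begin{itemize}
\item it restricts to $P(n)>x^{3/4}$ and applies the conditional Khintchine bound (Proposition \ref{Khintchine});
\item it uses GRH for the short prime sums \eqref{shortprimesum}, then Parseval, with a subtracted term at shift $2Vq/\log x$ controlled by the upper-bound proposition;
\item it bounds the main term from below by Jensen on windows of length $1/\log x$, namely $(\tfrac{1}{2R}\int_{-R}^{R}|F|^2)^q\ge\exp(\tfrac{q}{2R}\int_{-R}^{R}\log|F|^2)$ with $R=1/(2\log x)$, and factors over primes by independence;
\item for $1\le q<2$ it sums over the windows, which produces $\sum_k\min\{\log x/V,\log x/(1+|k|)\}^{q^2-q}$.
\end{itemize}
No barrier events or Girsanov comparison are needed when $q\ge1$.
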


The proofs of the above theorems follow closely the treatments in \cite{harper2020moments} with the incorporation of the arithmetic properties of $\lambda(n)$.  We shall prove the validity of the bounds 
\begin{align}
\label{mainestimationupper}
\begin{split} 
 \E |  \sum_{n \leq x}  h(n)\lambda(n)|^{2q}  \leq 
\begin{cases}
S(q,x) \exp \left( O(q^2) \right) , \quad  & \text{if $h$ is Steinhaus}, \\ \\
R(q,x) \exp \left( O(q^2) \right) , \quad  & \text{if $h$ is  Rademacher},
\end{cases}
\end{split}
\end{align}
  and
\begin{align}
\label{mainestimationlower}
\begin{split} 
\E |\sum_{n \leq x} h(n)\lambda(n)|^{2q} \geq 
\begin{cases}
S(q,x) \exp \left( O(q^2) \right) , \quad  & \text{if $h$ is Steinhaus}, \\ \\
R(q,x) \exp \left( O(q^2) \right) , \quad  & \text{if $h$ is  Rademacher},
\end{cases}
\end{split}
\end{align}
where
\[ S(q,x) = \exp \left( -q^{2}\log q - q^{2}\log\log(2q) \right) x^{q} \log^{(q-1)^2}x \]
and
\[ R(q,x) = \exp \left( -2q^{2}\log q - 2q^{2}\log\log(2q) \right) \left(1 + \min\left\{\log\log x , \frac{1}{|q - q_0 |}\right\}\right)  x^{q} \log^{\max\{(q-1)^2, q(2q-3)\}}x . \]
Theorems \ref{upperboundsfixedmodmeanS} and \ref{upperboundsfixedmodmeanR} follow from these bounds in \eqref{mainestimationupper} and \eqref{mainestimationlower} readily. \newline
 
We remark here that our assumption on GRH is only indispensable at two places in the paper.  The powerful conjecture is necessary to show the second term in \eqref{uppersmoothingdisplay} is negligible and to estimate from below certain sums over primes in short interval at \eqref{shortprimesum}.  With greater exertions, one may be able to remove the dependence on GRH and prove Theorems \ref{upperboundsfixedmodmeanS} and \ref{upperboundsfixedmodmeanR} unconditionally. 
  
\section{Preliminaries}
\label{sec 2}

In this section, we collect some auxiliary results needed in the proofs of our theorems.

\subsection{Properties of $\lambda(n)$}
\label{sec:cusp form}

    Recall that $f$ is a fixed holomorphic Hecke eigenform $f$ of weight $\kappa \equiv 0 \pmod 4$ for the full modular group $SL_2 (\mathbb{Z})$. For $\Re(s)>1$, the associated modular $L$-function $L(s, f)$ is defined as
\begin{align*}
L(s, f ) &= \sum_{n=1}^{\infty} \frac{\lambda(n)}{n^s}
 = \prod_{p} \left(1 + \sum^{\infty}_{i=1}\frac{\lambda(p^i)}{p^{is}}\right)=\prod_{p} \left(1 - \frac{\alpha_p }{p^s} \right)^{-1}\left(1 - \frac{\beta_p }{p^s} \right)^{-1}.
\end{align*}
  It follows that $\lambda(n)$ is multiplicative and satisfies the identity
\begin{align}
\label{Lambdapkrel}
 \lambda(p^m)=\sum^{m}_{j=0}\alpha^{m-j}_p\beta^{j}_p,
\end{align}
 for any prime $p$ and any non-negative integer $m$.  Moreover, by Deligne's proof \cite{D} of the Weil conjecture,
\begin{align}
\label{alpha}
|\alpha_{p}|=|\beta_{p}|=1, \quad \alpha_{p}\beta_{p}=1.
\end{align}
From \eqref{Lambdapkrel} and \eqref{alpha}, we deduce
\begin{align} \label{alphalambdarel}
  \alpha_p+\beta_p= \lambda(p) \quad \mbox{and} \quad  \alpha^2_p+\beta^2_p= \lambda^2(p)-2=\lambda(p^2)-1.
\end{align}

 Furthermore, \eqref{Lambdapkrel}, \eqref{alpha} and the multiplicativity of $\lambda(n)$ yield that $\lambda(n) \in \mr$, $\lambda (1) =1$ and for any $\varepsilon>0$,
\begin{align}
\label{lambdabound}
\begin{split}
  |\lambda(n)| \leq d(n) \ll n^{\varepsilon},
\end{split}
\end{align}
where $d(n)$ is the number of positive divisors $n$ and the last bound above follows from \cite[Theorem 2.11]{MVa1}. \newline

 Observe that $|\lambda(n)|^2=\lambda^2(n) \geq 0$ as $\lambda(n)$ is real. It was proved independently by R. A. Rankin \cite{Rankin39} and A. Selberg \cite{Selberg40}  that for $x \geq 1$ and $\theta=3/5$, 
\begin{align}
\label{lambdasquareasymp}
\begin{split}
  \sum_{n \leq x}\lambda^2(n) =\frac {L(1, \operatorname{sym}^2 f)}{\zeta(2)}x+O(x^{\theta}).
\end{split}
\end{align}
  Here $L(s, \operatorname{sym}^2 f)$ is the symmetric square $L$-function of $f$ defined for $\Re(s)>1$ by
 (see \cite[p. 137]{iwakow} and \cite[(25.73)]{iwakow})
\begin{align}
\label{Lsymexp}
\begin{split}
 L(s, \operatorname{sym}^2 f)= \prod_p \frac{1}{1-p^{-s}} \frac{1}{1-\alpha^2_pp^{-s}} \frac{1}{1-\beta^2_pp^{-s}} 
 = \zeta(2s) \sum_{n \geq 1}\frac {\lambda(n^2)}{n^s}=\prod_{p}\left( 1-\frac {\lambda(p^2)}{p^s}+\frac {\lambda(p^2)}{p^{2s}}-\frac {1}{p^{3s}} \right)^{-1}.
\end{split}
\end{align}
 A result of G. Shimura \cite{Shimura} implies that the corresponding completed symmetric square $L$-function
\begin{align}
\label{Lsymfeq}
 \Lambda(s, \operatorname{sym}^2 f)=& \pi^{-3s/2}\Gamma \Big( \frac {s+1}{2} \Big)\Gamma \Big(\frac {s+\kappa-1}{2}\Big) \Gamma \Big(\frac {s+\kappa}{2}\Big) L(s, \operatorname{sym}^2 f)
\end{align}
  is entire and satisfies the functional equation $\Lambda(s, \operatorname{sym}^2 f)=\Lambda(1-s, \operatorname{sym}^2 f)$. Moreover, $L(s, \operatorname{sym}^2 f)$ has no pole at $s=1$. \newline

The current best known $\theta$ in \eqref{lambdasquareasymp} is due to B. Huang \cites{Huang21, Huang24}, who showed that $\theta=3/5-\delta$ with $\delta<3/305$. Moreover, it is known (see \cites{Huang21}), under GRH, for any $\varepsilon>0$, 
\begin{align}
\label{lambdasquareasympconj}
\begin{split}
  \sum_{n \leq x}\lambda^2(n) =\frac {L(1, \operatorname{sym}^2 f)}{\zeta(2)}x+O(x^{1/2+\varepsilon}).
\end{split}
\end{align}

Next,  we include a result on certain sums over primes.
\begin{lemma}
\label{RS}
 Let $x \geq 2$. We have, for some constant $b_1$, $b_2$, $b_3$, $b_4$ with $b_3$, $b_4>0$,
\begin{align}
\label{merten}
\sum_{p\le x} \frac{1}{p} =& \log \log x + b_1+ O\Big(\frac{1}{\log x}\Big), \;  \\
\label{merten1}
\sum_{p\le x} \frac{\lambda^2(p)}{p} =& \log \log x + b_2+ O\Big(\frac{1}{\log x}\Big),  \\
\label{merten2-0}
\sum_{p\le x} \log p =& 
\begin{cases}
  x+O\Big(x \exp(-b_3\sqrt{\log x})\Big), & \text{unconditionally}, \\
  x+O\Big(x^{1/2}\log^2x \Big), &  \text{under GRH}, 
\end{cases} \\
\label{merten2}
\sum_{p\le x} (\lambda^2(p)-1)\log p \ll &  
\begin{cases}
 x \exp(-b_4\sqrt{\log x}), & \text{unconditionally}, \\
 x^{1/2}\log^2x , & \text{under GRH}.
\end{cases} 
\end{align}
\end{lemma}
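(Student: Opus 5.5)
The plan is to derive all four estimates from the prime number theorem for the two relevant $L$-functions, namely $\zeta(s)$ and $L(s,\operatorname{sym}^2 f)$, and then pass to the reciprocal sums by partial summation. The key algebraic input is \eqref{alphalambdarel}, which gives $\lambda^2(p)-1=\lambda(p^2)=\alpha_p^2+1+\beta_p^2-1$. Comparing with the Euler product \eqref{Lsymexp}, $\lambda^2(p)-1$ is exactly the coefficient $\alpha_p^2+\beta_p^2+1-1$, that is, the value at $p$ of the von Mangoldt-type coefficient $\Lambda_{\operatorname{sym}^2 f}(p)/\log p$ minus $1$. Concretely, $-\frac{L'}{L}(s,\operatorname{sym}^2 f)=\sum_n \Lambda_{\operatorname{sym}^2 f}(n)n^{-s}$ with $\Lambda_{\operatorname{sym}^2 f}(p)=(1+\alpha_p^2+\beta_p^2)\log p=\lambda(p^2)\log p=(\lambda^2(p)-1)\log p$. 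Therefore \eqref{merten2} is precisely the prime number theorem for $\operatorname{sym}^2 f$ with no main term, since $L(s,\operatorname{sym}^2 f)$ has no pole at $s=1$. This holds once we note that prime powers $p^k$ with $k\ge2$ contribute $O(x^{1/2}\log x)$, by $|\alpha_p|=|\beta_p|=1$.

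For \eqref{merten2-0}, I will simply quote the classical prime number theorem with de la Vallée Poussin error term, and its GRH (Riemann hypothesis) version $\psi(x)=x+O(x^{1/2}\log^2 x)$, together with $\psi(x)-\theta(x)\ll x^{1/2}$. For \eqref{merten2}, I will use the standard zero-free region for $L(s,\operatorname{sym}^2 f)$, which is of de la Vallée Poussin type and has no exceptional zero. This follows from Shimura's analytic continuation \eqref{Lsymfeq} together with the Gelbart--Jacquet lift and standard results as in Iwaniec--Kowalski, Theorem 5.13. That theorem gives $\psi_{\operatorname{sym}^2 f}(x)\ll x\exp(-b_4\sqrt{\log x})$ unconditionally, and under GRH gives $\ll x^{1/2}\log^2 x$ via the explicit formula (Iwaniec--Kowalski, Theorem 5.15). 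The only points to verify are that $\operatorname{sym}^2 f$ is a fixed self-dual degree-$3$ $L$-function satisfying the hypotheses of those theorems, and that there are no Siegel zeros. The latter holds for $\operatorname{sym}^2$ of a level-one form by Goldfeld--Hoffstein--Lieman, or more simply because the conductor is fixed, so only the effective constant $b_4$ is affected. I expect this verification of the hypotheses to be the main obstacle, although it is entirely citable.

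For \eqref{merten}, which is Mertens' theorem, I will use partial summation from \eqref{merten2-0}. Writing $\theta(t)=t+E(t)$ with $E(t)\ll t/\log^2 t$, we have
\[
\sum_{p\le x}\frac1p=\int_{2^-}^x\frac{d\theta(t)}{t\log t}=\log\log x+b_1+O\Bigl(\frac{1}{\log x}\Bigr),
\]
where the constant $b_1$ absorbs the convergent integral $\int_2^\infty E(t)\,d(1/(t\log t))$, and the tail $\int_x^\infty$ is $O(1/\log x)$.

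For \eqref{merten1}, I will write $\lambda^2(p)=1+(\lambda^2(p)-1)$. The first part is handled by \eqref{merten}. For the second part, set $A(t)=\sum_{p\le t}(\lambda^2(p)-1)\log p\ll t/\log^2 t$ by \eqref{merten2}. Partial summation then gives $\sum_{p\le x}(\lambda^2(p)-1)/p=\int_2^x dA(t)/(t\log t)$, which equals a convergent constant plus $O(1/\log x)$. Combining the two parts yields \eqref{merten1} with $b_2=b_1+{}$this constant.
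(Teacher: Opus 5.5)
Your proof is correct. For \eqref{merten2-0} and \eqref{merten2} it is the paper's argument. The paper also uses $\Lambda_{\operatorname{sym}^2 f}(p)=(\alpha_p^2+\beta_p^2+1)\log p=(\lambda^2(p)-1)\log p$ together with the Iwaniec--Kowalski prime number theorem for $L(s,\operatorname{sym}^2 f)$ under GRH. For the unconditional bound it simply quotes the authors' earlier paper, which is what your zero-free-region argument supplies. You are right that for fixed $f$ a possible real exceptional zero $\beta_0<1$ only contributes $O_f(x^{\beta_0})$, so Goldfeld--Hoffstein--Lieman is not even needed.

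The difference is in \eqref{merten} and \eqref{merten1}. The paper quotes Montgomery--Vaughan's Mertens theorem and Lemma 2.1 of the reference GHH. You instead derive both by partial summation from the prime sums, splitting $\lambda^2(p)=1+(\lambda^2(p)-1)$. Since \eqref{merten2} is part of the lemma anyway, this costs nothing. It makes the lemma self-contained, with only the two prime number theorems imported. It also shows that \eqref{merten1} is a corollary of \eqref{merten} and \eqref{merten2}, and the $\lambda^2(p)-1$ part even converges to its constant with error $O(1/\log^2 x)$ or better.

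One slip to fix in your first paragraph: $\alpha_p^2+1+\beta_p^2-1=\alpha_p^2+\beta_p^2$ is $\lambda(p^2)-1$, not $\lambda(p^2)$. Likewise, $\lambda^2(p)-1$ equals $\Lambda_{\operatorname{sym}^2 f}(p)/\log p$ itself, not that quantity minus $1$. The identity you write after ``Concretely'' is the correct one, and it is the only one you actually use.
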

\begin{proof}
  The expression in \eqref{merten} follows from \cite[Theorem 2.7 (d)]{MVa1}.  The formula \eqref{merten1} follows from \cite[Lemma 2.1]{GHH}.  From \cite[Theorem 6.9]{MVa1} (resp. \cite[Theorem 13.1]{MVa1}), the asymptotic formula in \eqref{merten2-0} holds unconditionally (resp. under GRH). Lastly, it was shown in \cite[(2.9)]{G&Zhao26-04} that the unconditional estimate in \eqref{merten2} is valid. Moreover, it is shown there that if we write, for $\Re(s) \geq 2$,
\begin{align*}
\begin{split}
  -\frac {L'(s, \operatorname{sym}^2 f)}{L(s, \operatorname{sym}^2 f)}=\sum_{n \geq 1}\frac {\Lambda_{\operatorname{sym}^2 f}(n)}{n^s},
\end{split}
\end{align*}
 then by \eqref{alphalambdarel} and \eqref{Lsymexp} that
\begin{align*}
\begin{split}
  \Lambda_{\operatorname{sym}^2 f}(p)=(\alpha^2_p+\beta^2_p+1)\log p=(\lambda^2(p)-1)\log p.
\end{split}
\end{align*}
We apply the above together with \cite[(5.4), (5.8), (5.49), (5.56)]{iwakow} and \eqref{Lsymfeq} to see that conditional bound in \eqref{merten2} is also valid under GRH. This completes the proof of the lemma.
\end{proof}

\subsection{Parseval’s identity}

    The following lemma is taken from \cite[Theorem 5.4]{MVa1} and gives a version of Parseval’s identity for Dirichlet series.
\begin{lemma}
\label{parseval}
    Let $(a_n)_{n\geq 1}$ be a sequence of complex numbers and $F(s)=\sum_{n=1}^{\infty} a_nn^{-s}$ be the corresponding Dirichlet series. If $\sigma_c$ denotes its abscissa of convergence, then, for any $\sigma>\max(0,\sigma_c)$,
    \begin{equation*}
        \int\limits_{1}^{\infty} \frac{\big|\sum_{n\leq x}a_n\big|^2 }{x^{1+2\sigma }} \dif x=\frac{1}{2\pi}\int\limits_{-\infty}^{+\infty}\frac{|F(\sigma+it)|^2}{|\sigma+it|^2} \dif t.
    \end{equation*}
\end{lemma}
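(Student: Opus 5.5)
The statement is the Parseval identity for Dirichlet series, Lemma \ref{parseval}. I would prove it by writing $F(s)$ as a Mellin transform of its summatory function and then applying Plancherel's theorem for the Fourier transform on $L^2(\mathbb{R})$.

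Set $A(x)=\sum_{n\le x}a_n$, with $A(x)=0$ for $x<1$. The first step is the standard fact that for $\Re s>\max(0,\sigma_c)$ one has $A(x)\ll_{\varepsilon} x^{\max(0,\sigma_c)+\varepsilon}$. Partial summation then gives
\[
F(s)=s\int_1^\infty A(x)x^{-s-1}\,\dif x ,
\]
with the integral converging absolutely for $\Re s>\max(0,\sigma_c)$. Strictly, the growth bound on $A$ needs the abscissa of convergence: one uses $\sigma_c\ge 0$ or the trivial bound when $\sigma_c<0$. I would treat this bound as routine, following \cite[Theorem 1.3]{MVa1}.

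Fix $\sigma$ and pick $\sigma'$ with $\max(0,\sigma_c)<\sigma'<\sigma$. Substituting $x=e^u$ gives
\[
\frac{F(\sigma+it)}{\sigma+it}=\int_{-\infty}^{\infty} g(u)e^{-itu}\,\dif u,
\qquad g(u)=A(e^u)e^{-\sigma u}.
\]
Since $|g(u)|\ll e^{-(\sigma-\sigma')u}$ and $g$ vanishes for $u<0$, $g$ lies in $L^1\cap L^2$, so the display says that $F(\sigma+it)/(\sigma+it)$ is the Fourier transform $\hat g(t)$. Plancherel then gives
\[
\int_{-\infty}^{\infty}|g(u)|^2\,\dif u=\frac{1}{2\pi}\int_{-\infty}^{\infty}|\hat g(t)|^2\,\dif t .
\]
Changing variables back with $u=\log x$ turns the left side into $\int_1^\infty |A(x)|^2x^{-1-2\sigma}\,\dif x$, which is exactly the claim.

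The only genuinely delicate point is the convergence step: making sure $g\in L^2$ uses only $\sigma>\max(0,\sigma_c)$ and not absolute convergence. This is handled by the bound $A(x)=O(x^{\sigma'})$ for every $\sigma'>\max(0,\sigma_c)$, which follows from partial summation applied to the convergent series $\sum a_n n^{-s_0}$ with $\sigma_c<s_0<\sigma'$. Both sides of the identity are then finite, and the rest is bookkeeping.
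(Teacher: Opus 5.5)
Your argument is correct: writing $F(\sigma+it)/(\sigma+it)$ as the Fourier transform of $g(u)=A(e^u)e^{-\sigma u}\in L^1\cap L^2$, using $A(x)\ll x^{\sigma'}$ for $\max(0,\sigma_c)<\sigma'<\sigma$, and then applying Plancherel is exactly the standard proof. The paper gives no proof of its own and simply cites \cite[Theorem 5.4]{MVa1}, which rests on this same Mellin-transform and Plancherel argument.
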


\subsection{Expectations on Random Euler products}
  We develope in this section some estimate on the expectations of certain random Euler products.  We start with Steinhaus random multiplicative functions.  Recall that $\mathbb{E}$ denotes the expectation.  We also write $\p$ for the probability measure and $\textbf{1}$ the indicator function throughout the paper. 
\begin{prop}
\label{prop1}
With the notation as above, let $h(n)$ be a Steinhaus random multiplicative function. We have, for any$\alpha$, $\beta \geq 0$, $x$, $y$ with $100(1 + \max\{\alpha^2 , \beta^2\}) \leq x \leq y$, $\sigma \geq - 1/\log y$ and $t \in \mr$,
\begin{align}
\label{Eprod}
\begin{split}
\E \prod_{x < p \leq y} & \left|1 - \frac{\alpha_ph(p)}{p^{1/2+\sigma}}\right|^{-2\alpha} \left|1 - \frac{\beta_ph(p)}{p^{1/2+\sigma}}\right|^{-2\alpha}\left|1 - \frac{\alpha_ph(p)}{p^{1/2+\sigma + it}}\right|^{-2\beta}\left|1 - \frac{\beta_ph(p)}{p^{1/2+\sigma + it}}\right|^{-2\beta}  \\
& = \exp\Big (\sum_{x < p \leq y} \frac{\lambda^2(p)(\alpha^2 + \beta^2 + 2\alpha\beta \cos(t\log p))}{p^{1 + 2\sigma}} + O\Big( \frac{\max\{\alpha, \beta, \alpha^3 , \beta^3\}}{\sqrt{x} \log x} \Big) \Big ) . 
\end{split}
\end{align}
 If additionally $\sigma \leq 1/\log y$, then the left-hand side of \eqref{Eprod} is
\begin{align}
\label{Eprod1}
\begin{split} = \exp \left( O \left(\max\{\alpha, \beta, \alpha^2 , \beta^2\} \left( 1 + \frac{|t|}{\log^{100}x} \right) \right) \right) \left(\frac{\log y}{\log x} \right)^{\alpha^2 + \beta^2} \left(1 + \min\left\{\frac{\log y}{\log x}, \frac{1}{|t|\log x}\right\} \right)^{2\alpha \beta}. 
\end{split}
\end{align}

  In particular, we have for any real $\alpha \geq 0$, any real $100(1 + \alpha^2  \leq x \leq y$, and any real $\sigma \geq - 1/\log y$,
\begin{align}
\label{Eprod2}
\begin{split}  
 \E \prod_{x < p \leq y} \left|1 - \frac{\alpha_ph(p)}{p^{1/2+\sigma}}\right|^{-2\alpha} \left|1 - \frac{\beta_ph(p)}{p^{1/2+\sigma}}\right|^{-2\alpha}=  & \exp\Big (\sum_{x < p \leq y} \frac{\lambda^2(p) \alpha^2}{p^{1 + 2\sigma}} + O\Big(\frac{\max\{\alpha, \alpha^3 \}}{\sqrt{x} \log x} \Big) \Big ) . 
\end{split}
\end{align}
   If $\sigma$ also satisfies $\sigma \leq 1/\log y$, then the above is
\begin{align}
\label{Eprod3}
= \exp \left( O\left( \max\{\alpha, \alpha^2 \} \right) \right) \left(\frac{\log y}{\log x} \right)^{\alpha^2}. 
\end{align}

\end{prop}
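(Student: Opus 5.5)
By independence of the $h(p)$, the expectation factors as a product over primes $x<p\le y$ of local expectations. So the plan is to compute each local factor to second order, exponentiate, and sum; then evaluate the prime sums with Lemma \ref{RS}.

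\textbf{Local factor.} Fix $p$ and put $z=h(p)$, uniform on the unit circle. Write $u=p^{-1/2-\sigma}$ and $v=p^{-1/2-\sigma-it}$. Since $|\alpha_p|=|\beta_p|=1$, the $p$-factor equals
\[
\left|1-\alpha_p u z\right|^{-2\alpha}\left|1-\beta_p u z\right|^{-2\alpha}\left|1-\alpha_p v z\right|^{-2\beta}\left|1-\beta_p v z\right|^{-2\beta}.
\]
Taking logarithms and using $-\log|1-w|^2 = 2\Re \sum_{k\ge1} w^k/k$, the logarithm of the factor is
\[
2\Re \sum_{k\ge1} \frac{z^k}{k}\Big(\alpha u^k(\alpha_p^k+\beta_p^k)+\beta v^k(\alpha_p^k+\beta_p^k)\Big).
\]
The $k=1$ term is $2\Re\big(z\lambda(p)(\alpha u+\beta v)\big)$ by \eqref{alphalambdarel}. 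We have $|u|,|v|\le p^{-1/2+1/\log y}\ll p^{-1/2}$ because $\sigma\ge -1/\log y$ and $p\le y$. Also $p>x\ge 100(1+\max\{\alpha^2,\beta^2\})$, so $\max(\alpha,\beta)|u|\le 1/10$ or so. We therefore expand $\exp$ of this expression as $1+L+L^2/2+O(|L|^3 e^{|L|})$, where $L$ is the logarithm above.

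Now take expectations, using $\E z^k\bar z^l=\mathbf 1_{k=l}$. Then $\E L=0$. The second-order term gives $\E L^2/2 = \lambda^2(p)|\alpha u+\beta v|^2$, which equals
\[
\frac{\lambda^2(p)\left(\alpha^2+\beta^2+2\alpha\beta\cos(t\log p)\right)}{p^{1+2\sigma}}.
\]
The remaining contributions are as follows.
\begin{itemize}
\item The $k\ge2$ terms contribute to $\E L^2$ only through $\max\{\alpha^2,\beta^2\}p^{-2+O(1/\log y)}$.
\item The third-order terms have size $\max\{\alpha^3,\beta^3\}p^{-3/2}$.
\item Terms of $\E L$ type vanish, since mixed moments $\E z^k$ with $k\ne 0$ are zero.
\end{itemize}
Cross terms pairing $k=2$ with a product of two $k=1$ terms also appear, with size $\max(\alpha,\alpha^2,\ldots)p^{-2}$. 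A cleaner route is to expand the product of four factors $(1-wz)^{-a}(1-\bar w\bar z)^{-a}$ as binomial series and compute $\E$ exactly as $\sum_k |c_k|^2$. This gives $1+\lambda^2(p)|\alpha u+\beta v|^2+O(\max\{\alpha,\beta,\alpha^4,\ldots\}p^{-3/2})$. Taking $\log$ and summing $\sum_{p>x}p^{-3/2}\ll 1/(\sqrt x\log x)$ yields \eqref{Eprod}. The key point is that the exponent $\max\{\alpha,\beta,\alpha^3,\beta^3\}$ comes from the size constraint $x\gg \max\{\alpha^2,\beta^2\}$, which absorbs higher powers. Controlling this error uniformly in $\alpha,\beta$ is the main technical obstacle, and I would handle it by bounding the tail of the power series in $\max(\alpha,\beta)p^{-1/2}\le 1/10$.

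\textbf{Evaluating the prime sums.} For \eqref{Eprod1}, when $|\sigma|\le 1/\log y$ we have $p^{-2\sigma}=1+O(\log p/\log y)$. By partial summation with \eqref{merten1}, this gives
\[
\sum_{x<p\le y}\frac{\lambda^2(p)}{p^{1+2\sigma}}=\log\frac{\log y}{\log x}+O(1).
\]
For the cosine term, write $\lambda^2(p)=1+(\lambda^2(p)-1)$.
\begin{itemize}
\item The term $1$ is handled by the standard estimate: $\sum_{x<p\le y}\cos(t\log p)/p$ equals $\log\min\{\log y/\log x,\,1/(|t|\log x)\}$ plus $O(1)$ when this is $\ge 1$, and is $O(1)$ otherwise. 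This follows from \eqref{merten2-0} via partial summation and $\int \cos(t u)\,du/u$, with error $O(1+|t|/\log^{100}x)$ from the prime number theorem error term.
\item The term $\lambda^2(p)-1$ is handled by \eqref{merten2} and partial summation. Its contribution is $O(1+|t|e^{-b_4\sqrt{\log x}})$, which lies within $O(1+|t|/\log^{100}x)$.
\end{itemize}
The $O(1)$ errors are multiplied by $\max\{\alpha^2,\beta^2,\alpha\beta\}$, which gives the stated exponent.

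\textbf{Special case.} Finally, \eqref{Eprod2} and \eqref{Eprod3} are the case $\beta=0$.
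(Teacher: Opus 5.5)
Your proposal is correct and follows essentially the same route as the paper: expand each local factor to second order, use $\E\, h(p)^k\overline{h(p)}^{\,l}=\mathbf{1}_{k=l}$ together with independence over primes, then evaluate the prime sums by partial summation. Your split $\lambda^2(p)=1+(\lambda^2(p)-1)$, handled with \eqref{merten2-0} and \eqref{merten2}, is just the paper's \eqref{merten4} unpacked. One slip in your ``cleaner route'': the $|c_2|^2$ contribution is $O(\max\{\alpha^4,\beta^4\}p^{-2})$, not $O(\max\{\alpha^4,\beta^4\}p^{-3/2})$, and it is exactly this extra factor $p^{-1/2}$ that lets $x\ge 100(1+\max\{\alpha^2,\beta^2\})$ absorb it into $\max\{\alpha^3,\beta^3\}p^{-3/2}$.
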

\begin{proof}
Our proof is similar to that of the Euler Product Result 2.1 in \cite{harper2020moments}. We set $M = M(\alpha,\beta) := \max\{\alpha, \beta, \alpha^3 , \beta^3\}$.
We note that by \eqref{alpha} and Taylor's approximation,
\begin{align*}
\begin{split}
& \left|1 - \frac{\alpha_ph(p)}{p^{1/2+\sigma}}\right|^{-2\alpha} \left|1 - \frac{\beta_ph(p)}{p^{1/2+\sigma}}\right|^{-2\alpha}\left|1 - \frac{\alpha_ph(p)}{p^{1/2+\sigma + it}}\right|^{-2\beta}\left|1 - \frac{\beta_ph(p)}{p^{1/2+\sigma + it}}\right|^{-2\beta} \\
=&  \exp\Big (-2\alpha \Re\log\Big(1 - \frac{\alpha_ph(p)}{p^{1/2+\sigma}}\Big) -2\alpha \Re\log\Big(1 - \frac{\beta_ph(p)}{p^{1/2+\sigma}}\Big) -2\beta \Re\log\Big(1 - \frac{\alpha_p h(p)}{p^{1/2+\sigma + it}}\Big)-2\beta \Re\log\Big(1 - \frac{\beta_p h(p)}{p^{1/2+\sigma + it}}\Big)\Big )  \\
= & \exp\Big ( \frac{2\alpha \Re (\alpha_p+\beta_p)h(p)}{p^{1/2 + \sigma}} + \frac{\alpha \Re (\alpha^2_p+\beta^2_p)h(p)^2}{p^{1+2\sigma}} +  \frac{2\beta \Re (\alpha_p+\beta_p)h(p)p^{-it}}{p^{1/2 + \sigma}} + \frac{\beta \Re (\alpha^2_p+\beta^2_p)h(p)^2 p^{-2it}}{p^{1+2\sigma}} + O\Big(\frac{\max\{\alpha, \beta\}}{p^{3/2 + 3\sigma}}\Big) \Big ). 
\end{split}
\end{align*}
Next, if $y \geq p > x \geq 100(1+\max\{\alpha^2 , \beta^2\})$, then \eqref{alpha} further implies that every term in the exponential here has size at most $4\max\{\alpha, \beta\}/p^{1/2 + \sigma} = 4\max\{\alpha, \beta\} e^{-\sigma \log p}/ p^{1/2} \leq 2e/5$. Therefore we may apply the Taylor series expansion of the exponential function, finding the above is
\begin{equation*}
\begin{split}
 =  1  + &\frac{2(\alpha \Re (\alpha_p+\beta_p)h(p) + \beta \Re (\alpha_p+\beta_p)h(p)p^{-it})}{p^{1/2 + \sigma}} + \frac{(\alpha \Re (\alpha^2_p+\beta^2_p)h(p)^2 + \beta \Re (\alpha^2_p+\beta^2_p)h(p)^2 p^{-2it})}{p^{1+2\sigma}}  \\
& + \frac{2(\alpha \Re (\alpha_p+\beta_p)h(p) + \beta \Re (\alpha_p+\beta_p)h(p)p^{-it})^2}{p^{1 + 2\sigma}} + O\Big(\frac{M}{p^{3/2 + 3\sigma}}\Big) . 
\end{split}
\end{equation*}
Now taking expectations, by symmetry, we have $\E \Re h(p) = \E \Re h(p)^2 = 0$, similarly for $\E \Re h(p) p^{-it}$ and $\E \Re h(p)^2 p^{-2it}$. A simple trigonometric calculation also shows that $\E (\Re h(p))^2 = 1/2$, and similarly $\E \Re h(p) \Re h(p) p^{-it} = \cos(t\log p)/2$. So by \eqref{alphalambdarel} we get
\begin{align*}
\begin{split}
\E & \left|1 - \frac{\alpha_ph(p)}{p^{1/2+\sigma}}\right|^{-2\alpha} \left|1 - \frac{\beta_ph(p)}{p^{1/2+\sigma}}\right|^{-2\alpha}\left|1 - \frac{\alpha_ph(p)}{p^{1/2+\sigma + it}}\right|^{-2\beta}\left|1 - \frac{\beta_ph(p)}{p^{1/2+\sigma + it}}\right|^{-2\beta} \\
& =  1 + \frac{2 (\E (\alpha \Re (\alpha_p+\beta_p)h(p) + \beta \Re (\alpha_p+\beta_p)h(p)p^{-it})^2)}{p^{1 + 2\sigma}} + O\Big(\frac{M}{p^{3/2 + 3\sigma}}\Big)  \\
& = 1 + \frac{2 (\E (\alpha \Re \lambda(p)h(p) + \beta \Re \lambda(p)h(p)p^{-it})^2)}{p^{1 + 2\sigma}} + O\Big(\frac{M}{p^{3/2 + 3\sigma}}\Big)  \\
& = 1 + \frac{2\lambda^2(p) (\alpha^2 \E (\Re h(p))^2 + 2\alpha\beta \E \Re h(p) \Re h(p) p^{-it} + \beta^2 \E (\Re h(p) p^{-it})^2)}{p^{1 + 2\sigma}} + O\Big(\frac{M}{p^{3/2 + 3\sigma}}\Big)  \\
& =  1 + \frac{\lambda^2(p) (\alpha^2 + \beta^2 + 2\alpha\beta \cos(t\log p))}{p^{1 + 2\sigma}} + O\Big(\frac{M}{p^{3/2 + 3\sigma}}\Big)  = \exp\Big (\frac{\lambda^2(p) (\alpha^2 + \beta^2 + 2\alpha\beta \cos(t\log p))}{p^{1 + 2\sigma}} + O\Big(\frac{M}{p^{3/2 + 3\sigma}}\Big) \Big ) . 
\end{split}
\end{align*}

With the above calculation, the independence of $h$ on distinct primes, and $p^{3/2 + \sigma} = e^{\sigma \log p} p^{3/2} \geq e^{-1} p^{3/2}$ for $p \leq y$, we derive that 
\begin{align*}
\begin{split}
\E \prod_{x < p \leq y} & \left|1 - \frac{\alpha_ph(p)}{p^{1/2+\sigma}}\right|^{-2\alpha} \left|1 - \frac{\beta_ph(p)}{p^{1/2+\sigma}}\right|^{-2\alpha}\left|1 - \frac{\alpha_ph(p)}{p^{1/2+\sigma + it}}\right|^{-2\beta}\left|1 - \frac{\beta_ph(p)}{p^{1/2+\sigma + it}}\right|^{-2\beta} \\
 = & \exp\Big (\sum_{x < p \leq y} \Big(\frac{\lambda^2(p) (\alpha^2 + \beta^2 + 2\alpha\beta \cos(t\log p))}{p^{1 + 2\sigma}} + O\Big(\frac{M}{p^{3/2 + 3\sigma}} \Big) \Big) \Big ). 
\end{split}
\end{align*}
 The above then implies that the validity of  \eqref{Eprod}. \newline
 
 Next, by \eqref{merten2-0} and \eqref{merten2}, we have for some constant $b_4>0$, and any $x \geq 2$,
\begin{align}
\label{merten3}
\begin{split}
\sum_{p\le x} \lambda^2(p)\log p =&  x+O\Big(x \exp(-b_4\sqrt{\log x})\Big).
\end{split}
\end{align}

The above leads to, for $x \geq 2$,
\begin{align}
\label{merten4}
\begin{split}
 \pi_f(x):=\sum_{p\le x} \lambda^2(p)=&  \int\limits^x_2\frac {\dif u}{\log u}+O\Big(x \exp(-b_4\sqrt{\log x})\Big).
\end{split}
\end{align}

  Now, using that $e^{-2\sigma \log p} - 1 \ll |\sigma| \log p \ll \log p/\log y$ for $|\sigma| \leq 1/\log y$, we deduce via \eqref{merten1} and \eqref{merten3} that
\begin{eqnarray*}
\sum_{x < p \leq y} \frac{\lambda^2(p) (\alpha^2 + \beta^2)}{p^{1 + 2\sigma}} & = & (\alpha^2 + \beta^2) \sum_{x < p \leq y} \frac{\lambda^2(p)}{p} + (\alpha^2 + \beta^2) \sum_{x < p \leq y} \frac{\lambda^2(p) (e^{-2\sigma \log p} - 1)}{p} \\
& = & (\alpha^2 + \beta^2)\log\Big(\frac{\log y}{\log x}\Big) + O(\max\{\alpha^2 , \beta^2\}).
\end{eqnarray*}

Similarly,
\begin{align*}
\begin{split}
& \sum_{x < p \leq y} \frac{2\alpha\beta \cos(t\log p)}{p^{1 + 2\sigma}} = \sum_{x < p \leq y} \frac{2\alpha\beta \cos(t\log p)}{p}+ O(\max\{\alpha^2 , \beta^2\}). 
\end{split}
\end{align*}  
Moreover, by \eqref{merten4}, 
\begin{eqnarray*}
\sum_{x < p \leq y} \frac{\lambda^2(p)\cos(t\log p)}{p} = \int\limits_{x}^{y} \frac{\cos(t \log z)}{z} \dif \pi_f(z) = \int\limits_{x}^{y} \frac{\cos(t \log z)}{z \log z} \dif z + O\Big(\frac{1 + |t|}{\log^{100}x} \Big)  = \int\limits_{\log x}^{\log y} \frac{\cos(tu)}{u} \dif u + O\Big(\frac{1+|t|}{\log^{100}x}\Big) . 
\end{eqnarray*}
  We then proceed as in the proof of Euler Product Result 2.1 in \cite{harper2020moments} to get that \eqref{Eprod1} holds. The relations given in \eqref{Eprod2} and \eqref{Eprod3} are obtained by setting $\beta=0$ in \eqref{Eprod} and \eqref{Eprod1}, respectively. This completes the proof of the proposition.  
\end{proof}

Our next result is a counterpart of the above for the Rademacher case. 
\begin{prop}
With the notation as above, let $h(n)$ be a  Rademacher random multiplicative function. We have, for any $\alpha$, $\beta \geq 0$, $x$, $y$ with $100(1 + \max\{\alpha^2 , \beta^2\}) \leq x \leq y$, $\sigma \geq - 1/\log y$ and $t_1$, $t_2 \in \mr$, 
\begin{align}
\label{EprodR}
\begin{split}
 \E \prod_{x < p \leq y} & \left|1 + \frac{\lambda(p)h(p)}{p^{1/2+\sigma + it_1}}\right|^{2\alpha} \left|1 + \frac{\lambda(p)h(p)}{p^{1/2+\sigma + it_2}}\right|^{2\beta} \\
 = & \exp\Big ( \sum_{x < p \leq y} \frac{\lambda^2(p)(\alpha^2 + \beta^2 + (\alpha^2 - \alpha)\cos(2t_1 \log p) + (\beta^2 - \beta)\cos(2t_2 \log p))}{p^{1 + 2\sigma}}  \\
& \hspace*{2cm} + \sum_{x < p \leq y} \frac{\lambda^2(p)(2\alpha\beta(\cos((t_1 + t_2)\log p) + \cos((t_1 - t_2)\log p)))}{p^{1 + 2\sigma}} + O\Big(\frac{\max\{\alpha, \beta, \alpha^3 , \beta^3\}}{\sqrt{x} \log x} \Big) \Big ) .
\end{split}
\end{align} 

 If moreover $\sigma \leq 1/\log y$, then the above is
\begin{align}
\label{Eprod1R}
\begin{split}
=  \exp & \Big( O\Big(\max\{\alpha, \beta, \alpha^2 , \beta^2\} \Big(1 + \frac{|t_1| + |t_2|}{\log^{100}x} \Big)\Big) \Big) \left(1 + \min\left\{\frac{\log y}{\log x}, \frac{|t_1|^{-1}}{\log x}\right\} \right)^{\alpha^2 - \alpha} \left(1 + \min\left\{\frac{\log y}{\log x}, \frac{|t_2|^{-1}}{\log x}\right\} \right)^{\beta^2 - \beta} \\
& \times \left(\frac{\log y}{\log x} \right)^{\alpha^2 + \beta^2} \left( \Big(1 + \min\left\{\frac{\log y}{\log x}, \frac{|t_1 + t_2|^{-1}}{\log x}\right\} \Big) \Big(1 + \min\left\{\frac{\log y}{\log x}, \frac{|t_1 - t_2|^{-1}}{\log x}\right\} \Big) \right)^{2\alpha\beta} . 
\end{split}
\end{align} 

As an upper bound of the above, we may replace the factor $\exp \Big( O\Big(\max\{\alpha, \beta, \alpha^2 , \beta^2\} \Big(1 + \frac{|t_1| + |t_2|}{\log^{100}x} \Big)\Big) \Big)$ by 
\begin{align}
\label{Eprod1R1}
\begin{split}
\exp (O(\max\{\alpha, \beta, \alpha^2 , \beta^2\})) \min\left\{\frac{\log y}{\log x}, 1 + \frac{(|t_1| + |t_2|)^{1/100}}{\log x} \right\}^{|\alpha^2 - \alpha| + |\beta^2 - \beta| + 4\alpha\beta},
\end{split}
\end{align}  
 and as a lower bound we may replace the same by 
 \[ \exp(O(\max\{\alpha, \beta, \alpha^2 , \beta^2\})) \min\left\{\frac{\log y}{\log x}, 1 + \frac{(|t_1| + |t_2|)^{1/100}}{\log x} \right\}^{-(|\alpha^2 - \alpha| + |\beta^2 - \beta| + 4\alpha\beta)}. \]
\end{prop}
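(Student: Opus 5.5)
We follow the proof of Proposition \ref{prop1}, adapted to the Rademacher setting as in Euler Product Result 2 of \cite{harper2020moments}. Write $M=\max\{\alpha,\beta,\alpha^3,\beta^3\}$ and $\theta_j = t_j\log p$. The first step is to expand each factor at a single prime. Since $h(p)=\pm1$ and $\lambda(p)$ is real, we have
\[
\log\Big|1+\frac{\lambda(p)h(p)}{p^{1/2+\sigma+it_j}}\Big|^{2} = 2\Re\log\Big(1+\frac{\lambda(p)h(p)}{p^{1/2+\sigma+it_j}}\Big) = \frac{2\lambda(p)h(p)\cos\theta_j}{p^{1/2+\sigma}} - \frac{\lambda^2(p)\cos 2\theta_j}{p^{1+2\sigma}} + O\Big(\frac{1}{p^{3/2+3\sigma}}\Big),
\]
using $|\lambda(p)|\le 2$ from \eqref{lambdabound} and $h(p)^2=1$. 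Multiplying by $\alpha$ and $\beta$ gives an exponent whose terms are all bounded by an absolute constant, because $x\ge 100(1+\max\{\alpha^2,\beta^2\})$ and $p^{-\sigma}\le e$. So we may Taylor expand the exponential up to the quadratic term, with error $O(M/p^{3/2+3\sigma})$. The term linear in $h(p)$ then vanishes in expectation, and the quadratic term uses $h(p)^2=1$ deterministically. The expectation of the single-prime factor is therefore
\[
1+\frac{\lambda^2(p)}{p^{1+2\sigma}}\Big(2(\alpha\cos\theta_1+\beta\cos\theta_2)^2-\alpha\cos2\theta_1-\beta\cos2\theta_2\Big)+O\Big(\frac{M}{p^{3/2+3\sigma}}\Big).
\]
Next we apply $2\cos^2\theta=1+\cos2\theta$ and $2\cos\theta_1\cos\theta_2=\cos(\theta_1+\theta_2)+\cos(\theta_1-\theta_2)$. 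This turns the bracket into
\[
\alpha^2+\beta^2+(\alpha^2-\alpha)\cos2\theta_1+(\beta^2-\beta)\cos2\theta_2+2\alpha\beta\big(\cos(\theta_1+\theta_2)+\cos(\theta_1-\theta_2)\big).
\]
We rewrite $1+u$ as $\exp(u+O(u^2))$, where $u^2\ll M/p^{2+4\sigma}$ is absorbed. Independence across primes then gives the product over $x<p\le y$. Finally, $\sum_{p>x}p^{-3/2}e^{3}\ll x^{-1/2}/\log x$, which yields \eqref{EprodR}.

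For \eqref{Eprod1R}, we take $|\sigma|\le 1/\log y$ and, exactly as in Proposition \ref{prop1}, replace $p^{-1-2\sigma}$ by $p^{-1}$ at a cost of $O(\max\{\alpha^2,\beta^2\})$, the relevant coefficients being $O(\max\{\alpha,\beta,\alpha^2,\beta^2\})$. The non-oscillating part gives $(\alpha^2+\beta^2)\log(\log y/\log x)$ by \eqref{merten1}. Each oscillating sum $\sum_{x<p\le y}\lambda^2(p)\cos(\tau\log p)/p$, with $\tau\in\{2t_1,2t_2,t_1\pm t_2\}$, is handled by partial summation against \eqref{merten4}. This gives $\int_{\log x}^{\log y}\cos(\tau u)\,u^{-1}\,\dif u+O((1+|\tau|)/\log^{100}x)$, and the integral equals $\log(1+\min\{\log y/\log x,\,|\tau|^{-1}/\log x\})+O(1)$. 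Exponentiating with the coefficients $\alpha^2-\alpha$, $\beta^2-\beta$ and $2\alpha\beta$ produces the stated product, with the error factor $\exp(O(\max\{\alpha,\beta,\alpha^2,\beta^2\})(1+(|t_1|+|t_2|)/\log^{100}x))$. The factor of $2$ in $2t_j$ changes $\min\{\cdot,|2t_j|^{-1}/\log x\}$ only by a bounded factor, so it is harmless.

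The main obstacle is the refined replacement \eqref{Eprod1R1}, which is needed when $|t_j|$ is huge and the error $|t|/\log^{100}x$ is useless. Here we argue as in \cite{harper2020moments}. Split the range at $z=\min\{y,\exp((\log x)(1+(|t_1|+|t_2|)^{1/100}/\log x))\}$, or more simply at a point where the prime number theorem error term is admissible. For primes below $z$, bound each $|\cos|\le1$ trivially; each oscillating sum then contributes at most $\pm\log(\log z/\log x)+O(1)$. Raised to the coefficient, this gives the factor $\min\{\log y/\log x,\,1+(|t_1|+|t_2|)^{1/100}/\log x\}^{\pm(|\alpha^2-\alpha|+|\beta^2-\beta|+4\alpha\beta)}$, with the sign chosen according to the upper or lower bound. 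For primes above $z$, the error $(1+|\tau|)/\log^{100}z$ is $O(1)$, because $\log z\gg|\tau|^{1/100}$. The resulting main terms there differ from the full-range main terms by at most the same factor, since $\min\{\cdot\}$ is monotone in the endpoint. Checking that this bookkeeping gives exactly the stated exponent $|\alpha^2-\alpha|+|\beta^2-\beta|+4\alpha\beta$ is the only delicate step; everything else is routine.
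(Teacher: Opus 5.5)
Your derivation of \eqref{EprodR} and \eqref{Eprod1R} follows the paper's route and is fine: the single-prime expansion of $2\Re\log(1+\cdot)$, the second-order Taylor step, $\E h(p)=0$ and $h(p)^2=1$, the two product-to-sum identities, independence, and partial summation against \eqref{merten4} as in Proposition \ref{prop1}. One small slip: $u^2$ can be as large as $\max\{\alpha^4,\beta^4\}p^{-2-4\sigma}$, not $M p^{-2-4\sigma}$, but it is still absorbed into $O(M/p^{3/2+3\sigma})$ because $\sqrt p\ge 10\max\{\alpha,\beta\}$.

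The genuine gap is in \eqref{Eprod1R1}, at the step you call delicate. With one split point $z$ and the trivial bound $|\cos|\le 1$ on $(x,z]$ for every frequency, the bookkeeping does not close. Write $S_\tau(a,b)=\sum_{a<p\le b}\lambda^2(p)\cos(\tau\log p)/p$, $I_w(\tau)=\log(1+\min\{\log y/\log w,\,1/(|\tau|\log w)\})$ and $L=\log(\log z/\log x)$. Your two ingredients are $|S_\tau(x,z)|\le L+O(1)$ and $0\le I_x(\tau)-I_z(\tau)\le L$. These losses add, so you only get $-2L-O(1)\le S_\tau(x,y)-I_x(\tau)\le L+O(1)$.

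For a concrete case, take $t_1=t_2=T$ with $T^{1/100}$ much larger than $\log x$, and $y$ larger still. The term with $\tau=t_1-t_2=0$ has coefficient $2\alpha\beta$. Using $\cos\ge -1$ on $(x,z]$ and then passing from $I_z(0)=\log(1+\log y/\log z)$ back to $I_x(0)$ costs $(\log z/\log x)^{-4\alpha\beta}$ in the lower bound, not $(\log z/\log x)^{-2\alpha\beta}$. Likewise, for $\alpha<1$ and $t_1=0$, the negative coefficient $\alpha^2-\alpha$ is doubled in the upper bound. So the argument as written gives an exponent up to twice $|\alpha^2-\alpha|+|\beta^2-\beta|+4\alpha\beta$.

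The missing idea is to treat each frequency separately. The endpoint shift is only costly when $|\tau|\lesssim 1/\log x$, and such frequencies need no splitting.
\begin{itemize}
\item If $|\tau|\le 1/\log x$, the full-range partial summation already gives $S_\tau(x,y)=I_x(\tau)+O((1+|\tau|)/\log^{100}x)=I_x(\tau)+O(1)$.
\item If $|\tau|>1/\log x$, then $I_x(\tau),I_z(\tau)\le\log 2$, so moving the endpoint costs $O(1)$. On $(z,y]$, when $z<y$, the error $(1+|\tau|)/\log^{100}z$ is $O(1)$, because $|\tau|\le 2(|t_1|+|t_2|)\le 2\log^{100}z$. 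Hence $|S_\tau(x,y)-I_x(\tau)|\le L+O(1)$.
\end{itemize}
In both cases $|S_\tau(x,y)-I_x(\tau)|\le L+O(1)$. Multiply by $|c_\tau|$, where $c_{2t_1}=\alpha^2-\alpha$, $c_{2t_2}=\beta^2-\beta$ and $c_{t_1\pm t_2}=2\alpha\beta$, and sum over the four frequencies. This gives exactly the exponent $\pm(|\alpha^2-\alpha|+|\beta^2-\beta|+4\alpha\beta)$ with your choice $\log z/\log x=\min\{\log y/\log x,\,1+(|t_1|+|t_2|)^{1/100}/\log x\}$. The remaining $O(1)$'s and the $p^{-2\sigma}$ correction go into $\exp(O(\max\{\alpha,\beta,\alpha^2,\beta^2\}))$. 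The paper gives no details at this point and simply defers to Harper's Euler Product Result 2.
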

\begin{proof}
 The proof proceeds by a modification of that of Euler Product Result 2.2 in \cite{harper2020moments}. We set $M = M(\alpha,\beta) := \max\{\alpha, \beta, \alpha^3 , \beta^3\}$ as well to see from \eqref{alpha} and \eqref{alphalambdarel} that
\begin{equation*}
\begin{split}
& \left|1 + \frac{\lambda(p)h(p)}{p^{1/2+\sigma+it_1}}\right|^{2\alpha}  \left|1 + \frac{\lambda(p)h(p)}{p^{1/2+\sigma + it_2}}\right|^{2\beta}  = \exp\Big (2\alpha \Re\log(1 + \frac{\lambda(p)h(p)}{p^{1/2+\sigma+it_1}}) +2\beta \Re\log(1 + \frac{\lambda(p)h(p)}{p^{1/2+\sigma + it_2}})\Big )  \\
& \hspace*{1cm} = \exp\Big ( \frac{2\alpha \Re \lambda(p)h(p) p^{-it_1}}{p^{1/2 + \sigma}} - \frac{\alpha \Re f(p)^2 p^{-2it_1}}{p^{1+2\sigma}} +  \frac{2\beta \Re f(p)p^{-it_2}}{p^{1/2 + \sigma}} - \frac{\beta \Re f(p)^2 p^{-2it_2}}{p^{1+2\sigma}} + O\Big(\frac{\max\{\alpha, \beta\}}{p^{3/2 + 3\sigma}} \Big) \Big ) \\
& \hspace*{1cm} =  1 + \frac{2(\alpha \Re \lambda(p)h(p)p^{-it_1} + \beta \Re \lambda(p)h(p)p^{-it_2})}{p^{1/2 + \sigma}} - \frac{(\alpha \Re \lambda^2(p)h(p)^2 p^{-2it_1} + \beta \Re \lambda^2(p)h(p)^2 p^{-2it_2})}{p^{1+2\sigma}} + \nonumber \\
& \hspace*{2cm} + \frac{2(\alpha \Re \lambda(p)h(p)p^{-it_1} + \beta \Re \lambda(p)h(p)p^{-it_2})^2}{p^{1 + 2\sigma}} + O\Big(\frac{M}{p^{3/2 + 3\sigma} }\Big) .
\end{split}
\end{equation*}
 Note that in the Rademacher case we have $h(p)^2 \equiv 1$ and $\E \Re h(p) p^{-it} = \cos(t\log p) \E h(p) = 0$. Thus
\begin{equation*}
\begin{split}
\E & \left|1 + \frac{\lambda(p)h(p)}{p^{1/2+\sigma+it_1}}\right|^{2\alpha} \left|1 + \frac{\lambda(p)h(p)}{p^{1/2+\sigma + it_2}}\right|^{2\beta} \nonumber \\
& \hspace*{1cm} = 1 - \frac{\lambda^2(p)(\alpha\cos(2t_1 \log p) + \beta\cos(2t_2 \log p))}{p^{1+2\sigma}} + \frac{2\lambda^2(p) (\alpha\cos(t_1 \log p) + \beta\cos(t_2 \log p) )^2}{p^{1 + 2\sigma}} + O\Big(\frac{M}{p^{3/2 + 3\sigma}} \Big). \nonumber
\end{split}
\end{equation*}
  We then follow the proof of Euler Product Result 2.2 in \cite{harper2020moments} and argue similar to the ones given in the proof of Proposition \ref{prop1} to see that all the assertions of the proposition are valid. This completes the proof.  
\end{proof}

\subsection{Probabilistic Results}
\label{secprobcalc}

  In this section, we gather some probabilistic results on random multiplicative functions, largely inherited from those in \cite{harper2020moments}. For any real $x$, let $\lfloor x \rfloor$ denote the largest integer not exceeding $x$ and $\lceil x \rceil$ the smallest integer greater than or equal to $x$.  For any positive integers $k$, $n$, 
we write $d_{k}(n)$ for the $k$-fold divisor function, which is the number of $k$-tuples of natural numbers whose product equals $n$.  Equivalently, it is the Dirichlet series coefficient of $\zeta(s)^k$, where $\zeta(s)$ is the Riemann zeta function. Our first result is from \cite[Probability Result 2.3]{harper2020moments}.
\begin{prop}
\label{prop3}
With the notation as above, we have for any $q \geq 1$ and sequence of complex numbers $(a_{n})_{n \leq N}$,  
\begin{align*}
\begin{split}
  \E\left|\sum_{n \leq N} a_n f(n) \right|^{2q} \leq 
 \begin{cases}
\displaystyle{\left(\sum_{n \leq N} |a_n|^2 d_{\lceil q \rceil}(n) \right)^{q}}, \quad & \text{if $h(n)$ is a Steinhaus random multiplicative function}, \\ \\
\displaystyle{\left(\sum_{n \leq N} |a_n|^2 d_{2\lceil q \rceil - 1}(n) \right)^{q}}, \quad & \text{if $h(n)$ is a Rademacher random multiplicative function}.
\end{cases}
\end{split}
\end{align*}
\end{prop}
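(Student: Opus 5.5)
This is Harper's Probability Result 2.3, with $f(n)$ in the display read as $h(n)$. The plan is to prove the integer case by expanding the moment and then pass to real $q$ by interpolation.

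\emph{Integer case, Steinhaus.} Take $q=k\in\mathbb N$. Write
\[
\E\Big|\sum_{n\le N}a_nh(n)\Big|^{2k}=\E\Big|\sum_{m}b_m h(m)\Big|^2,
\qquad b_m=\sum_{\substack{n_1\cdots n_k=m\\ n_i\le N}}a_{n_1}\cdots a_{n_k}.
\]
This uses the complete multiplicativity of Steinhaus $h$. By orthogonality, $\E h(m)\overline{h(m')}=\mathbf 1_{m=m'}$, so the moment equals $\sum_m|b_m|^2$. Cauchy--Schwarz over the $d_k(m)$ factorisations gives
\[
|b_m|^2\le d_k(m)\sum_{n_1\cdots n_k=m}|a_{n_1}|^2\cdots|a_{n_k}|^2 .
\]
Since $d_k$ is submultiplicative, $d_k(n_1\cdots n_k)\le\prod_i d_k(n_i)$. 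Summing over $m$ then gives
\[
\Big(\sum_{n\le N}|a_n|^2d_k(n)\Big)^k,
\]
as required.

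\emph{Integer case, Rademacher.} The same expansion applies, but $\E h(n_1)\cdots h(n_k)\overline{h(n_{k+1})\cdots h(n_{2k})}$ is $1$ exactly when the product $n_1\cdots n_{2k}$ is a perfect square (with all $n_i$ squarefree), and $0$ otherwise. To bound this count, I would follow Harper and argue by induction on $k$, or via hypercontractivity (Bonami's inequality), that the moment is at most
\[
\Big(\sum_n|a_n|^2d_{2k-1}(n)\Big)^k .
\]
The combinatorial step is this: fix the tuple $(n_1,\dots,n_{2k-1})$; then $n_{2k}$ is determined as the squarefree part of their product. Apply Cauchy--Schwarz, pairing $|a_{n_{2k}}|^2$ against $\prod_{i<2k}|a_{n_i}|^2$. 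The count of representations is controlled by $d_{2k-1}$ of the relevant squarefree integer.

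\emph{Real $q$.} Set $k=\lceil q\rceil$ and apply Hölder (Lyapunov), $\E|X|^{2q}\le(\E|X|^{2k})^{q/k}$. This yields
\[
\Big(\sum|a_n|^2d_k(n)\Big)^{q},
\]
and $d_k$ is monotone in $k$, so the bound holds with $d_{\lceil q\rceil}$. The same argument gives the Rademacher bound with $d_{2\lceil q\rceil-1}$.

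The main obstacle is the Rademacher combinatorics: bounding the square-product count by a divisor weight with index $2k-1$. I would import Harper's argument for this step essentially verbatim, as the paper does.
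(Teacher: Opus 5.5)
Your Steinhaus computation is correct. The orthogonality of the $h(m)$, Cauchy--Schwarz over the at most $d_k(m)$ factorisations, and submultiplicativity of $d_k$ together give the bound. The passage from real $q$ to $k=\lceil q\rceil$ via $\E|X|^{2q}\le(\E|X|^{2k})^{q/k}$ is also correct. This is the standard argument for the Steinhaus half of Harper's Probability Result 2.3, which the paper cites without proof.

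The Rademacher step you actually write down fails. After eliminating $n_{2k}$ as the squarefree part of $n_1\cdots n_{2k-1}$, your Cauchy--Schwarz pairing needs a bound on the number of $(2k-1)$-tuples of squarefree $n_i\le N$ whose product has a prescribed squarefree part $m$. You need that number to be at most $d_{2k-1}(m)$, and it is not. For $k=2$ and $m=1$, every triple $(n,n,1)$ with $n\le N$ squarefree qualifies, as do all triples $(ab,ac,bc)$. So the count grows with $N$, and you get no bound of the form $\sum_m|a_m|^2d_3(m)$. Because $h$ is not completely multiplicative, square factors are invisible to it. The fibres of the map taking $(n_1,\dots,n_{2k-1})$ to its squarefree part are therefore unbounded, and this pairing cannot produce a weight depending only on $m$.

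The missing ingredient is hypercontractivity. Regard $\sum_{n\le N}a_nh(n)$ as a multilinear polynomial in the independent signs $(h(p))_p$, where the monomial attached to squarefree $n$ has degree $\omega(n)$. Bonami's inequality, i.e.\ $\|T_\rho G\|_r\le\|G\|_2$ for $\rho=(r-1)^{-1/2}$ and $r\ge 2$, is also valid for complex coefficients. It gives
\[
\Big\|\sum_{n\le N}a_nh(n)\Big\|_r^2\le\sum_{n\le N}|a_n|^2(r-1)^{\omega(n)} .
\]
Take $r=2q$. For squarefree $n$ you have $(2q-1)^{\omega(n)}\le(2\lceil q\rceil-1)^{\omega(n)}=d_{2\lceil q\rceil-1}(n)$, and non-squarefree $n$ contribute nothing on the left. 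This yields the claim directly and explains where the index $2\lceil q\rceil-1$ comes from.

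If you want the argument self-contained, the induction to write out is on the number of primes, not on $k$. Condition on one $h(p)$ at a time, and combine the two-point inequality with Minkowski's inequality in $L^{r/2}$.
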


Our next result is an extension of the lower bound part of Khintchine's inequality (see \cite[Lemma 2.8]{G&Zhao26-04}) to both the Steinhaus and the Rademacher case.
\begin{prop}
\label{Khintchine}
     Let $h(n)$ be either a Steinhaus or a Rademacher random multiplicative function and $(a_n)_{n\geq 1}$ a sequence of complex numbers. The we have, for any $q \geq 1$, 
\begin{align}
\label{KTlower}
\begin{split}
 \E \left|\sum_n h(n)\lambda(n)a_n \right|^{2q}
\gg \Big( \sum_n \lambda^2(n)|a_n|^2 \Big)^{q}.
\end{split}
\end{align}
\end{prop}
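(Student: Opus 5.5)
Since $q \geq 1$, the function $u \mapsto u^q$ is convex on $[0,\infty)$. I would therefore reduce \eqref{KTlower} to a second-moment computation via Jensen's (or H\"older's) inequality. Write $b_n = \lambda(n)a_n$ and $S = \sum_n h(n) b_n$. Then
\[
\E|S|^{2q} = \E\big(|S|^2\big)^{q} \geq \big(\E|S|^2\big)^{q}.
\]
It remains to compute $\E|S|^2$ by orthogonality, treating the two cases separately.

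In the Steinhaus case, $\E h(n)\overline{h(m)} = \mathbf{1}_{n=m}$ for all $n,m \geq 1$. This is because $h(n)\overline{h(m)} = \prod_p h(p)^{v_p(n)-v_p(m)}$, and $\E h(p)^k = 0$ for every integer $k \neq 0$. Expanding the square gives
\[
\E|S|^2 = \sum_n |b_n|^2 = \sum_n \lambda^2(n)|a_n|^2,
\]
using that $\lambda(n)$ is real. In the Rademacher case, $h$ is supported on square-free $n$, and for square-free $n \neq m$ we have $\E h(n)h(m) = \E \prod_{p \mid nm/(n,m)^2} h(p) = 0$. Hence
\[
\E|S|^2 = \sum_n \mu^2(n)\lambda^2(n)|a_n|^2.
\]

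The only real obstacle is the Rademacher case, where this differs from the right-hand side of \eqref{KTlower} by the terms with $n$ not square-free. Those terms contribute nothing to $S$, because $h(n)=0$ there. I would therefore read the statement in the natural way: in the Rademacher case the sequence $a_n$ is implicitly restricted to square-free $n$, or equivalently the right-hand side carries a factor $\mu^2(n)$. With that convention the two cases coincide, and we obtain \eqref{KTlower} with implied constant $1$. The sums are assumed absolutely convergent, for instance $a_n$ finitely supported, so that expanding the square and interchanging expectation and summation are justified.

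Finally, I note that if the referenced version in \cite[Lemma 2.8]{G&Zhao26-04} is stated without $\lambda$, then the proposition follows immediately by applying it to the sequence $\lambda(n)a_n$. The Jensen argument above gives a self-contained proof in any case.
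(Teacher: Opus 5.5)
Your proof is correct, and it is more self-contained than the paper's. The paper gives no argument of its own here; it only points to the lower-bound half of Khintchine's inequality in the authors' earlier paper.

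You observe instead that for $q\ge 1$ no Khintchine-type input is needed. Jensen reduces the $2q$-th moment to the second moment, which orthonormality of $\{h(n)\}$ computes exactly, so the implied constant is $1$. Your route uses only orthogonality of the $h(n)$, not independence. This matters because the $h(n)$ are not independent across composite $n$ (e.g. $h(6)=h(2)h(3)$), so Khintchine in its usual form does not literally apply to a sum over all $n$. It also makes plain that $q\ge 1$ is exactly the hypothesis being used.

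Your argument also carries over unchanged to the conditional form in which the result is actually applied in Proposition \ref{propstlower}. There $a_p=\sum_{m\le x/p}h(m)\lambda(m)$ is random but independent of $(h(p))_{x^{3/4}<p\le x}$, and conditional Jensen gives $\E|S|^{2q}\ge\E\bigl(\sum_p\lambda^2(p)|a_p|^2\bigr)^q$.

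Your Rademacher caveat is not just a reading convention; it is a necessary correction. As printed, the statement is false: take $a_4=1$ and $a_n=0$ otherwise. The left side then vanishes, while the right side is $|\lambda(4)|^{2q}=|\lambda(2)^2-1|^{2q}$, which is nonzero in general (for $f=\Delta$, $\lambda(4)=-1472/2^{11}$). The Rademacher right-hand side should carry $\mu^2(n)$. This does not affect the paper, which only applies the result to sums over primes.

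For infinite sums, square-summability of $\lambda(n)a_n$ is enough to justify your computation, via $L^2$ convergence.
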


  Let $(\Omega, \mathcal{F}, \p)$ be a probability space. We say $(\mathcal{F}_{n})_{n \geq 0}$ is a filtration on $\mathcal{F}$ if it is a sequence of sub-$\sigma$-algebras satisfying $\mathcal{F}_{0} \subseteq \mathcal{F}_{1} \subseteq ... \subseteq \mathcal{F}$. We say a sequence of random variables $(X_n)_{n \geq 0}$ on $(\Omega, \mathcal{F}, \p)$ is a submartingale relative to $(\mathcal{F}_{n})_{n \geq 0}$ and $\p$ if it satisfies:
\begin{enumerate}
\item $X_n$ is measurable with respect to $\mathcal{F}_{n}$, for all $n \geq 0$;

\item  $\E|X_n|$ is finite, for all $n \geq 0$;

\item the conditional expectation $\E(X_n |\mathcal{F}_{n-1})$ is $\geq X_{n-1}$ almost surely for all $n \geq 1$, .
\end{enumerate}

  The following result is Doob’s maximal inequality from \cite[Theorem 9.4]{gut}. 
\begin{prop}
\label{prop4}
 With the notation as above, let $(X_n)_{n \geq 0}$ be a non-negative submartingale on some probability space and with respect to some filtration. Then for any $p > 1$, we have
$$ \E \Big( \max_{0 \leq k \leq n} X_k \Big)^p \leq \Big(\frac{p}{p-1} \Big)^p \E X_{n}^p . $$
\end{prop}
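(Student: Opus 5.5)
This is Doob's inequality, quoted from \cite[Theorem 9.4]{gut}, and we could simply cite it. For completeness, here is the standard argument. Write $M_n = \max_{0 \leq k \leq n} X_k$. The plan has two stages: first prove a weak-type (distributional) inequality that controls $M_n$ by $X_n$ on the event where $M_n$ is large, then integrate it in $\lambda$ and close with H\"older's inequality. We may assume $\E X_n^p < \infty$, otherwise there is nothing to prove.

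\emph{Step 1 (maximal weak-type bound).} For $\lambda > 0$ we show
$$\lambda \, \p(M_n \geq \lambda) \leq \E\big( X_n \mathbf{1}_{\{M_n \geq \lambda\}} \big).$$
Decompose $\{M_n \geq \lambda\}$ into the disjoint events $A_k = \{X_k \geq \lambda,\ X_j < \lambda \text{ for } j < k\}$ for $0 \leq k \leq n$, so that $A_k \in \mathcal{F}_k$. On $A_k$ we have $\lambda \leq X_k$, hence $\lambda \p(A_k) \leq \E(X_k \mathbf{1}_{A_k})$. The submartingale property, iterated via the tower property, gives $X_k \leq \E(X_n \mid \mathcal{F}_k)$ almost surely. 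Since $A_k \in \mathcal{F}_k$, this yields $\E(X_k \mathbf{1}_{A_k}) \leq \E(X_n \mathbf{1}_{A_k})$. Summing over $k$ gives the claim. Non-negativity of $X$ is not needed here, but it is used in Step 2.

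\emph{Step 2 (layer-cake and H\"older).} Since $M_n \geq 0$, write
$$\E M_n^p = \int_0^\infty p\lambda^{p-1} \p(M_n \geq \lambda)\, \dif \lambda \leq \int_0^\infty p\lambda^{p-2}\, \E\big(X_n \mathbf{1}_{\{M_n \geq \lambda\}}\big)\, \dif\lambda .$$
Interchange integral and expectation by Tonelli, which is legitimate because $X_n \geq 0$. This gives
$$\E M_n^p \leq \frac{p}{p-1}\, \E\big( X_n M_n^{p-1} \big).$$
H\"older's inequality with exponents $p$ and $p/(p-1)$ then bounds the right side by
$$\frac{p}{p-1} \big(\E X_n^p\big)^{1/p} \big(\E M_n^p\big)^{(p-1)/p}.$$

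\emph{Step 3 (finiteness and division).} The main point to watch is that we divide by $(\E M_n^p)^{(p-1)/p}$, so it must be finite. Here this is immediate: $M_n^p \leq \sum_{k \leq n} X_k^p$, and each $\E X_k^p \leq \E X_n^p < \infty$. This holds because $x \mapsto x^p$ is convex and nondecreasing on $[0,\infty)$, so by conditional Jensen $X_k^p \leq (\E(X_n\mid\mathcal{F}_k))^p \leq \E(X_n^p \mid \mathcal{F}_k)$. If $\E M_n^p = 0$ the result is trivial. Otherwise, dividing and raising to the $p$-th power gives
$$\E M_n^p \leq \Big(\frac{p}{p-1}\Big)^p \E X_n^p ,$$
which is the statement.
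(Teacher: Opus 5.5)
Your proof is correct and complete. The paper gives no proof of its own and simply quotes Doob's inequality from \cite[Theorem 9.4]{gut}. Your argument is the standard proof of that result: the weak-type bound $\lambda\,\p(M_n \geq \lambda) \leq \E\big(X_n \mathbf{1}_{\{M_n \geq \lambda\}}\big)$, layer-cake integration with Tonelli, H\"older, and the finiteness check $\E M_n^p \leq (n+1)\E X_n^p$ before dividing.
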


\section{Connecting to Euler products}\label{seceulerprod}
 
For a Steinhaus or Rademacher random multiplicative function $h(n)$, any $P \geq 2$ and $s \in \comc$ with $\Re(s) > 0$, let $F_{P, f}$ denote the partial Euler product of $h(n)\lambda(n)$ over $P$-smooth numbers.  Recall that for any positive real $x$, a positive integer is said to be $x$-smooth if all of its prime factors do not exceed $x$. Thus we have by \eqref{Lambdapkrel},
\begin{align*}
\begin{split}
F_{P,f}(s) =& \sum_{\substack{n=1, \\ n \; \text{is} \; P \; \text{smooth}}}^{\infty} \frac{h(n)\lambda(n)}{n^s} \\
=& \begin{cases}
\displaystyle \prod_{p \leq P} \left(1 - \frac{\alpha_ph(p)}{p^s}\right)^{-1}\left(1 - \frac{\beta_ph(p)}{p^s}\right)^{-1},  \quad & \text{$h$ is a Steinhaus random multiplicative function}, \\
\displaystyle \prod_{p \leq P} \left(1 + \frac{\lambda(p)h(p)}{p^s} \right), \quad & \text{$h$ is a Rademacher random multiplicative function}.
\end{cases}
\end{split}
\end{align*}
  We also write, for brevity, $F_{k,f}$ for the function $F_{x^{e^{-(k+1)}}, f}$ with any integer $-1 \leq k \leq \log\log x $ and $F_f$ for $F_{-1,f}$.  We further set $\| \cdot \|_{r} := (\E|\cdot|^{r})^{1/r}$ for $r \geq 1$. \newline
 
We now follow the treatments of Section 4 in \cite{harper2020moments} to bound $\E|\sum_{n \leq x} h(n)\lambda(n)|^{2q}$ for $q \geq 1$ by connectiing to integrals of Euler products.

\subsection{Upper bounds}
\label{secupperbounds}

The following result gives the upper bounds of $\E|\sum_{n \leq x} h(n)\lambda(n)|^{2q}$ and is analoguous to \cite[Propositions 4.1--4.2]{harper2020moments}.
\begin{prop}
\label{propstupper}
With the notation as above, let $x$ be large and $\mathcal{L} := \lfloor (\log\log x)/10 \rfloor$. Then we have, uniformly for all $1 \leq q \leq \log^{0.05}x$ and any Steinhaus random multiplicative function $h(n)$,
\begin{align}
\label{EupperboundS}
\begin{split}
 \Big\| \sum_{n \leq x} h(n)\lambda(n) \Big\|_{2q} \leq \sqrt{\frac{x}{\log x}} e^{O(q)} \sum_{0 \leq k \leq \mathcal{L}} \Big\| \int\limits_{-1/2}^{1/2} |F_{k,f}( \tfrac12 + \tfrac{q-k}{\log x} + it)|^2 \dif t \Big\|_{q}^{1/2} + e^{O(q)} \sqrt{\frac{x}{\log x}}.
\end{split}
\end{align}
 Similarly, uniformly for all $1 \leq q \leq \log^{0.05}x$ and any Rademacher random multiplicative function $h(n)$,
\begin{align}
\label{EupperboundR}
\begin{split}
\Big\| \sum_{n \leq x} h(n)\lambda(n) \Big\|_{2q} \leq  \sqrt{\frac{x}{\log x}} e^{O(q)} \sum_{0 \leq k \leq \mathcal{L}} & \max_{N \in \mz} \frac{1}{(|N|+1)^{1/8}} \Big\| \int\limits_{N-1/2}^{N+1/2} |F_{k,f}(\tfrac12 + \tfrac{q-k}{\log x} + it)|^2 \dif t \Big\|_{q}^{1/2} \\
& + e^{O(q)} \sqrt{\frac{x}{ \log x}}.
\end{split}
\end{align}
\end{prop}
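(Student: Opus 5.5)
We follow the proof of Propositions 4.1--4.2 of \cite{harper2020moments} and write $n\le x$ according to the size of its largest prime factor $P(n)$. Set $x_k := x^{e^{-(k+1)}}$ for $-1\le k\le \mathcal{L}$ and decompose
\[
\sum_{n\le x} h(n)\lambda(n) = \sum_{0\le k\le \mathcal{L}} \sum_{\substack{n\le x\\ x_k< P(n)\le x_{k-1}}} h(n)\lambda(n) + \sum_{\substack{n\le x\\ P(n)\le x_{\mathcal{L}}}} h(n)\lambda(n).
\]
By Minkowski's inequality it suffices to bound the $\|\cdot\|_{2q}$-norm of each piece separately.

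\textbf{Smooth part.} For the last sum we use Proposition \ref{prop3} together with Rankin's trick. Bounding $\lambda^2(n)d_{\lceil q\rceil}(n)$ (or $d_{2\lceil q\rceil-1}(n)$ in the Rademacher case) on $x_{\mathcal{L}}$-smooth $n$ by the weight $(x/n)^{1-\eta}$ with $\eta\asymp 1/\log x_{\mathcal L}$ gives a bound
\[
\ll x\exp\Big(\sum_{p\le x_{\mathcal L}} \frac{O(q)\,p^{\eta}}{p}-\eta\log x\Big).
\]
Since $\log x/\log x_{\mathcal L}=e^{\mathcal L+1}\ge \log^{0.1}x$ and $q\le \log^{0.05}x$, this is far smaller than $e^{O(q)}x/\log x$. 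The estimate \eqref{merten1} is what controls $\lambda^2(p)$ on average here. This produces the final term $e^{O(q)}\sqrt{x/\log x}$.

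\textbf{Pieces with $0\le k\le\mathcal L$.} For a fixed $k$, write $n=pm$ with $p\in(x_k,x_{k-1}]$ the largest prime factor and $m$ being $x_k$-smooth. Square-full contributions from $p^2\mid n$ are negligible. By multiplicativity $\lambda(n)=\lambda(p)\lambda(m)$, so the piece equals $\sum_{x_k<p\le x_{k-1}} h(p)\lambda(p)\sum_{m\le x/p,\, P(m)\le x_k} h(m)\lambda(m)$.

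Conditioning on $(h(p))_{p\le x_k}$, the outer sum is a sum of independent terms. Applying Proposition \ref{prop3} conditionally in the Steinhaus case (for Rademacher, use the hypercontractive bound from the same proposition with $d_{2\lceil q\rceil-1}$, which costs only $e^{O(q)}$ on primes), we get
\[
\|\cdot\|_{2q}\le e^{O(q)}\Big\|\sum_{x_k<p\le x_{k-1}}\lambda^2(p)\,\Big|\sum_{m\le x/p} h(m)\lambda(m)\Big|^2\Big\|_q^{1/2}.
\]
Next we smooth the prime sum into an integral $\int_{x_k}^{x_{k-1}} |\sum_{m\le x/t}\cdots|^2\,\frac{\dif t}{\log t}$. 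This uses short-interval estimates for $\sum\lambda^2(p)$ from \eqref{merten2} under GRH and \eqref{merten4}. The error from replacing $\lambda^2(p)$ by its average is the second term in the paper's smoothing display, and GRH is needed so that it is negligible relative to the main term. After substituting $z=x/t$, the integral becomes $\frac{x}{\log x_k}\int |\sum_{m\le z}\cdots|^2 z^{-2}\,\dif z$. Inserting the factor $(z/x)^{2(q-k)/\log x}$, which is $\asymp e^{O(q)}$ in the relevant range after the choice $\sigma=(q-k)/\log x$, we apply Parseval (Lemma \ref{parseval}) with $F=F_{k,f}$. This yields
\[
\frac{x\,e^{O(q)}}{e^{-k}\log x}\,e^{-2(q-k)}\cdots\int_{\mathbb R}\frac{|F_{k,f}(\tfrac12+\tfrac{q-k}{\log x}+it)|^2}{|\tfrac12+\tfrac{q-k}{\log x}+it|^2}\,\dif t.
\]
The $e^{k}$ gain from $\log x_k$ is offset by the shift $\sigma$, which is why $\sigma=(q-k)/\log x$ is chosen.

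\textbf{Reduction to a unit interval.} Split $\mathbb R$ into intervals $[N-\tfrac12,N+\tfrac12]$. The weight is then $\asymp (1+|N|)^{-2}$, and Minkowski's inequality in $L^q$ gives $\sum_N (1+N^2)^{-1}\|\int_N\|_q$. In the Steinhaus case, the law of $(h(p)p^{-iN})_p$ equals that of $(h(p))_p$, so every interval has the same $q$-norm as $[-\tfrac12,\tfrac12]$, and the sum over $N$ converges. In the Rademacher case there is no such translation invariance. We instead take the maximum of $(|N|+1)^{1/8}$-weighted norms, using that $\sum_N(1+|N|)^{-2+1/4}$ converges after taking square roots.

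\textbf{Main obstacle.} I expect the main obstacle to be the conditional step and the smoothing from primes to integrals with only $e^{O(q)}$ loss, uniformly for $q$ up to $\log^{0.05}x$. This is where the irregularity of $\lambda^2(p)$ enters, and it is handled by GRH via \eqref{merten2}.
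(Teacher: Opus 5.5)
\textbf{There is a genuine gap: your factorisation of the $k$-th piece is wrong, and this is not cosmetic.}

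Writing $n=pm$ with one prime $p\in(x_k,x_{k-1}]$ and $m$ $x_k$-smooth captures only those $n$ with exactly one prime factor in $(x_k,x_{k-1}]$. Integers with two or more such factors are omitted. For $k=0$ these include $p_1p_2m'\le x$ with $p_1,p_2\in(x^{1/e},x^{1/2}]$; for larger $k$ there can be up to roughly $e^{k+1}$ such factors. Your remark about $p^2\mid n$ does not cover them.

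Since the statement is phrased with $F_{k,f}$, the inner sum must run over $x_k$-smooth numbers. So the outer variable has to absorb \emph{all} prime factors above $x_k$: one writes $n=mn'$ with $m>1$ composed only of primes in $(x_k,x_{k-1}]$ and $n'$ $x_k$-smooth. Conditioning on $(h(p))_{p\le x_k}$ and applying Proposition \ref{prop3} then produces weights $d_{\lceil q\rceil}(m)\lambda^2(m)$ on such $m$. Passing to an integral then requires
\[
\sum_{\substack{t/(1+1/X)\le m\le t\\ p\mid m\Rightarrow x_k<p\le x_{k-1}}}\frac{X}{m}\,d_{\lceil q\rceil}(m)\lambda^2(m)\ll\frac{e^{O(q)}}{\log t},\qquad X=e^{\sqrt{\log x}}.
\]
This is a sieve bound for integers built from primes in an interval, not a short-interval prime estimate. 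The paper obtains it from three inputs:
\begin{itemize}
\item the bound $d_{\lceil q\rceil}(m)\lambda^2(m)\le 5^{-\Omega(m)}(20\lceil q\rceil)^{\Omega(m)}$;
\item the lower bound $\Omega(m)\ge e^k\log t/\log x$;
\item Number Theory Result 1 of \cite{Harper20}.
\end{itemize}
The factor $5^{-\Omega(m)}$ is what both tames the exponential weights and converts the natural $1/\log x_k$ into $1/\log t$. Your proposal has no substitute for this.

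\textbf{The smoothing step is also misidentified.} The GRH-sensitive term is not an error from replacing $\lambda^2(p)$ by its mean. It is $\frac{X}{m}\int_m^{m(1+1/X)}\bigl|\sum_{x/t<n\le x/m,\ x_k\text{-smooth}}h(n)\lambda(n)\bigr|^2\,\dif t$, i.e.\ the cost of replacing the sharp cutoff $x/m$ by $x/t$. Controlling its $q$-norm needs Cauchy--Schwarz between two moments:
\begin{itemize}
\item the second moment, which is $\ll x/(mX)$ for $m\le x/X^{2/(1-2\varepsilon)}$ by the $O(y^{1/2+\varepsilon})$ error term in \eqref{lambdasquareasympconj};
\item a crude $(4q-2)$-th moment bound.
\end{itemize}
It also needs a separate treatment of $m>x/X^{2/(1-2\varepsilon)}$ via the divisor bound and Number Theory Result 1.

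\textbf{The $1/\log t$ factor is handled lossily.} Bounding $1/\log t$ by $1/\log x_k$ loses $e^{k+1}$ uniformly in $z=x/t$, including near $z=1$ where nothing can compensate. You would therefore end up with an extra $e^{k/2}$ in \eqref{EupperboundS}. Your inserted factor $(z/x)^{2(q-k)/\log x}$ is not $e^{O(q)}$ when $k>q$: it reaches $e^{2(k-q)}$ at $z=1$. As written, it also moves the abscissa to $\frac12+\frac{k-q}{\log x}$, the wrong sign. The paper instead keeps $1/\log(x/z)$ and uses
\[
\frac{\log x}{\log(x/z)}\le e^{O(q)}z^{2(k-q)/\log x}\qquad\text{for } 1\le z\le x^{1-e^{-(k+1)}} .
\]
This is exactly what yields the abscissa $\frac12+\frac{q-k}{\log x}$ with no loss in $k$.
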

\begin{proof}
  We denote $P(n)$ the largest prime factor of $n$. For $h(n)$ being either a Steinhaus or a Rademacher random multiplicative function, Minkowski's inequality gives that, for all $q \geq 1$,
\begin{align}
\label{EPndecomp}
\Big\| \sum_{n \leq x} h(n)\lambda(n) \Big\|_{2q} \leq \sum_{0 \leq k \leq \mathcal{L}} \Big\| \sum_{\substack{n \leq x, \\ x^{e^{-(k+1)}} < P(n) \leq x^{e^{-k}}}} h(n)\lambda(n) \Big\|_{2q} + \Big\| \sum_{\substack{n \leq x, \\ P(n) \leq x^{e^{-(\mathcal{L}+1)}} }} h(n)\lambda(n) \Big\|_{2q} .
 \end{align}
  
 We now apply Proposition \ref{prop3} and bound $\textbf{1}_{n \leq x}$ by $(\frac{x}{n})^{1-1/\log^{0.9}x} = xe^{-\log^{0.1}x} /n^{1-1/\log^{0.9}x}$ via Rankin's trick to see that
\begin{align}
\label{sumdsmallsmoothS}
\begin{split} 
\Big\| \sum_{\substack{n \leq x, \\ P(n) \leq x^{e^{-(\mathcal{L}+1)}} }} h(n)\lambda(n) \Big\|_{2q} \leq \sqrt{\sum_{\substack{n \leq x, \\ P(n) \leq x^{e^{-(\mathcal{L}+1)}} }} d_{\lceil q \rceil}(n)\lambda^2(n) } \leq \sqrt{x e^{-\log^{0.1}x} \sum_{\substack{n \\ P(n) \leq x^{e^{-(\mathcal{L}+1)}} }} \frac{d_{\lceil q \rceil}(n)\lambda^2(n)}{n^{1 - 1/\log^{0.9}x}} }. 
\end{split}
\end{align}
Now,
\begin{align}
\label{sumdlambdapsmall}
\begin{split}
\sum_{\substack{n \\ P(n) \leq x^{e^{-(\mathcal{L}+1)}} }} & \frac{d_{\lceil q \rceil}(n)\lambda^2(n)}{n^{1 - 1/\log^{0.9}x}}  = \prod_{p \leq x^{e^{-(\mathcal{L}+1)}}} \Big(1 - \frac{\lambda^2(p)}{p^{1 - 1/\log^{0.9}x}}\Big)^{- \lceil q \rceil}\prod_{p \leq x^{e^{-(\mathcal{L}+1)}}}\Big(1 - \frac{\lambda^2(p)}{p^{1 - 1/\log^{0.9}x}}\Big)^{ \lceil q \rceil} \\
& \hspace*{5cm} \times \prod_{p \leq x^{e^{-(\mathcal{L}+1)}}}\Big(1+\frac{d_{\lceil q \rceil}(p)\lambda^2(p)}{p^{1 - 1/\log^{0.9}x}}+\sum^{\infty}_{j=2}\frac{d_{\lceil q \rceil}(p^j)\lambda^2(p^j)}{p^{j(1 - 1/\log^{0.9}x)}}\Big) \\
& = \prod_{p \leq x^{e^{-(\mathcal{L}+1)}}} \Big(1 - \frac{\lambda^2(p)}{p^{1 - 1/\log^{0.9}x}}\Big)^{- \lceil q \rceil}\prod_{p \leq x^{e^{-(\mathcal{L}+1)}}}\Big(1-\frac{d_{\lceil q \rceil}(p)\lambda^2(p)}{p^{1 - 1/\log^{0.9}x}}+O\Big(\frac{\lceil q \rceil^2}{p^{2(1 - 1/\log^{0.9}x)}}\Big)\Big) \\
& \hspace*{2cm} \times \prod_{p \leq x^{e^{-(\mathcal{L}+1)}}}\Big(1+\frac{d_{\lceil q \rceil}(p)\lambda^2(p)}{p^{1 - 1/\log^{0.9}x}}+O\Big(\frac{d_{\lceil q \rceil}(p^2)}{p^{2(1 - 1/\log^{0.9}x)}}\Big)\Big) \\
& = \prod_{p \leq x^{e^{-(\mathcal{L}+1)}}}\Big(1 - \frac{\lambda^2(p)}{p^{1 - 1/\log^{0.9}x}}\Big)^{- \lceil q \rceil}\prod_{p \leq x^{e^{-(\mathcal{L}+1)}}}\Big(1+O\Big(\frac{d_{\lceil q \rceil}(p^2)}{p^{2(1 - 1/\log^{0.9}x)}}\Big)\Big).
\end{split}
\end{align}

  Recall that $\mathcal{L} := \lfloor (\log\log x)/10 \rfloor$, so that $p^{1/\log^{0.9}x} \ll \exp(\log x \cdot e^{-\log \log x/10}/\log^{0.9}x) \ll 1$ for $p \leq x^{e^{-(\mathcal{L}+1)}}$. It follows that
\begin{align*}
\begin{split}
  \prod_{p \leq x^{e^{-(\mathcal{L}+1)}}}\Big(1+O\Big(\frac{d_{\lceil q \rceil}(p^2)}{p^{2(1 - 1/\log^{0.9}x)}}\Big)\Big)=\prod_{p \leq x^{e^{-(\mathcal{L}+1)}}}\Big(1+O\Big(\frac{d_{\lceil q \rceil}(p^2)}{p^2}\Big)\Big) \ll 1.
\end{split}
\end{align*}

From \eqref{sumdlambdapsmall} and the above, we get
\begin{align}
\label{sumdlambdapsmall1}
\begin{split}
   \sum_{\substack{n \\ P(n) \leq x^{e^{-(\mathcal{L}+1)}} }} \frac{d_{\lceil q \rceil}(n)\lambda^2(n)}{n^{1 - 1/\log^{0.9}x}} \ll &  \prod_{p \leq x^{e^{-(\mathcal{L}+1)}}}\Big(1 - \frac{\lambda^2(p)}{p^{1 - 1/\log^{0.9}x}}\Big)^{- \lceil q \rceil} \ll \prod_{p \leq e^{\log^{0.9}x}}\Big(1 - \frac{\lambda^2(p)}{p^{1 - 1/\log^{0.9}x}}\Big)^{- \lceil q \rceil} \\
= & \exp \Big(-\lceil q \rceil \sum_{p \leq e^{\log^{0.9}x}} \log \Big(1 - \frac{\lambda^2(p)}{p^{1 - 1/\log^{0.9}x}} \Big) \Big) = e^{O(q)}\exp \Big(\lceil q \rceil \sum_{p \leq e^{\log^{0.9}x}}\frac{\lambda^2(p)}{p^{1 - 1/\log^{0.9}x}} \Big) \\
= & e^{O(q)}\exp \Big(O(q) \sum_{p \leq e^{\log^{0.9}x}}\frac{\lambda^2(p)}{p} \Big)  = \log^{O(q)}x,
\end{split}
\end{align}
 where the last bound above follows from \eqref{merten1}. As $q \leq \log^{0.05}x$, from \eqref{sumdsmallsmoothS}--\eqref{sumdlambdapsmall1}, for a sufficiently small $c>0$, we have
\begin{align}
\label{sumdsmallsmoothS1}
\begin{split} 
\Big\| \sum_{\substack{n \leq x, \\ P(n) \leq x^{e^{-(\mathcal{L}+1)}} }} h(n)\lambda(n) \Big\|_{2q} \ll \sqrt{x} e^{-c\log^{0.1}x}. 
\end{split}
\end{align}

 Next, we denote by $\E^{(k)}$ the expectation conditional on $(h(p))_{p \leq x^{e^{-(k+1)}}}$.  Using Proposition \ref{prop3}, we obtain
\begin{align}
\label{sumdPnsmall}
\begin{split} 
 \sum_{0 \leq k \leq \mathcal{L}} & \Big\| \sum_{\substack{n \leq x, \\ x^{e^{-(k+1)}} < P(n) \leq x^{e^{-k}}}} h(n)\lambda(n) \Big\|_{2q} 
=  \sum_{0 \leq k \leq \mathcal{L}} \Big\| \sum_{\substack{1 < m \leq x , \\ p|m \Rightarrow x^{e^{-(k+1)}} < p \leq x^{e^{-k}}}} h(m)\lambda(m) \sum_{\substack{n \leq x/m, \\ n \; \text{is} \; x^{e^{-(k+1)}} \text{-smooth}}} h(n)\lambda(n)  \Big\|_{2q}  \\
& = \sum_{0 \leq k \leq \mathcal{L}} \Biggl( \E \E^{(k)}\Biggl|\sum_{\substack{1 < m \leq x , \\ p|m \Rightarrow x^{e^{-(k+1)}} < p \leq x^{e^{-k}}}} h(m)\lambda(m) \sum_{\substack{n \leq x/m, \\ n \; \text{is} \; x^{e^{-(k+1)}} \text{-smooth}}} h(n)\lambda(n)  \Biggr|^{2q} \Biggr)^{1/2q}  \\
& \leq \sum_{0 \leq k \leq \mathcal{L}} \Big( \E \Big( \sum_{\substack{1 < m \leq x , \\ p|m \Rightarrow x^{e^{-(k+1)}} < p \leq x^{e^{-k}}}} d_{\lceil q \rceil}(m)\lambda^2(m) \Big|\sum_{\substack{n \leq x/m, \\ n \; \text{is} \; x^{e^{-(k+1)}} \text{-smooth}}} h(n)\lambda(n)  \Big|^2 \Big)^{q} \Big)^{1/2q} . 
\end{split}
\end{align}

Setting $X = e^{\sqrt{\log x}}$, we then see that, uniformly for any $1 \leq q \leq \log^{0.05}x$, the above is
\begin{align}
\label{uppersmoothingdisplay}
\begin{split}
 =  \sum_{0 \leq k \leq \mathcal{L}} & \Big\| \sum_{\substack{1 < m \leq x , \\ p|m \Rightarrow x^{e^{-(k+1)}} < p \leq x^{e^{-k}}}} d_{\lceil q \rceil}(m)\lambda^2(m) \Big|\sum_{\substack{n \leq x/m, \\ n \; \text{is} \; x^{e^{-(k+1)}} \text{-smooth}}} h(n)\lambda(n)  \Big|^2 \Big\|_{q}^{1/2} \\
& \ll  \sum_{0 \leq k \leq \mathcal{L}} \Big\| \sum_{\substack{1 < m \leq x , \\ p|m \Rightarrow x^{e^{-(k+1)}} < p \leq x^{e^{-k}}}} d_{\lceil q \rceil}(m)\lambda^2(m) \frac{X}{m} \int\limits_{m}^{m(1 + 1/X)} \Big| \sum_{\substack{n \leq x/t, \\ x^{e^{-(k+1)}} \text{-smooth}}} h(n)\lambda(n)  \Big|^2 \dif t \Big\|_{q}^{1/2} \\
& \hspace*{1cm} + \sum_{0 \leq k \leq \mathcal{L}} \Big\| \sum_{\substack{1 < m \leq x , \\ p|m \Rightarrow x^{e^{-(k+1)}} < p \leq x^{e^{-k}}}} d_{\lceil q \rceil}(m)\lambda^2(m) \frac{X}{m} \int\limits_{m}^{m(1 + 1/X)} \Big| \sum_{\substack{x/t < n \leq x/m, \\ x^{e^{-(k+1)}} \text{-smooth}}} h(n)\lambda(n)  \Big|^2 \dif t \Big\|_{q}^{1/2} .
\end{split}
\end{align}

 We now apply Minkowski's inequality, followed by H\"{o}lder's inequality with exponent $q$ to the normalized integral $\frac{X}{m} \int_{m}^{m(1 + 1/X)} \dif t$ to see that the second term on the right-hand side of \eqref{uppersmoothingdisplay} is
\begin{align}
\label{secondtermest}
\begin{split}
 \leq & \sum_{0 \leq k \leq \mathcal{L}} \sqrt{\sum_{\substack{1 < m \leq x , \\ p|m \Rightarrow x^{e^{-(k+1)}} < p \leq x^{e^{-k}}}} d_{\lceil q \rceil}(m)\lambda^2(m) \Big\| \frac{X}{m} \int\limits_{m}^{m(1 + 1/X)} \Big| \sum_{\substack{x/t < n \leq x/m, \\ x^{e^{-(k+1)}} \text{-smooth}}} h(n)\lambda(n) \Big|^2 \dif t \Big\|_{q} } \\
 \leq & \sum_{0 \leq k \leq \mathcal{L}} \sqrt{\sum_{\substack{1 < m \leq x , \\ p|m \Rightarrow x^{e^{-(k+1)}} < p \leq x^{e^{-k}}}} d_{\lceil q \rceil}(m)\lambda^2(m) \Biggl( \frac{X}{m} \int\limits_{m}^{m(1 + 1/X)} \E \Biggl| \sum_{\substack{x/t < n \leq x/m, \\ x^{e^{-(k+1)}} \text{-smooth}}} h(n)\lambda(n) \Biggr|^{2q} \dif t \Biggr)^{1/q} }  \\
= & \sum_{0 \leq k \leq \mathcal{L}} \sqrt{M_1+M_2} . 
\end{split}
\end{align}
  where $M_1$ denotes the sum under the square root in the penultimate line in \eqref{secondtermest} with $m$ restricted to $1 < m \leq Y$ and $M_2$ the complementary sum.  We shall specify the value of $Y$ later. \newline

Now Cauchy's inequality renders 
\begin{align}
\label{E2qMsmall}
\begin{split}
 \E \Biggl| \sum_{\substack{x/t < n \leq x/m, \\ x^{e^{-(k+1)}} \text{-smooth}}} h(n)\lambda(n) \Biggr|^{2q} \leq & \sqrt{\E \Biggl| \sum_{\substack{x/t < n \leq x/m, \\ x^{e^{-(k+1)}} \text{-smooth}}} h(n)\lambda(n) \Biggr|^{2} 
\E \Biggl| \sum_{\substack{x/t < n \leq x/m, \\ x^{e^{-(k+1)}} \text{-smooth}}} h(n)\lambda(n) \Biggr|^{2(2q-1)}},
\end{split}
\end{align}
 where the last estimation above follows by arguing as those in Section \ref{secsteineasy} to see that
\begin{align}
\label{E2q-1}
\begin{split} 
\E \Biggl| \sum_{\substack{x/t < n \leq x/m, \\ x^{e^{-(k+1)}} \text{-smooth}}} h(n)\lambda(n) \Biggr|^{2(2q-1)} \ll  \Big( \frac {x}{m} \Big)^{2q-1} \log^{O(q^2)}x. 
\end{split}
\end{align}  

  On the other hand, we have by \eqref{lambdasquareasympconj} that under GRH, 
\begin{align*}
\begin{split}
 & \E \Biggl| \sum_{\substack{x/t < n \leq x/m, \\ x^{e^{-(k+1)}} \text{-smooth}}} h(n)\lambda(n) \Biggr|^{2} =  \sum_{\substack{x/t < n \leq x/m, \\ x^{e^{-(k+1)}} \text{-smooth}}}\lambda^2(n) \ll \sum_{\substack{x/t < n \leq x/m}}\lambda^2(n) \ll \frac xm-\frac xt+\left(\frac x{m}\right)^{1/2+\varepsilon} \ll \frac {x}{mX}+ \left(\frac x{m}\right)^{1/2+\varepsilon} ,
\end{split}
\end{align*}
where the last bound follows from the observation that if  $m \leq t \leq m(1+1/X)$, then
\[ \frac xm-\frac xt \leq \frac xm-\frac x{m(1+1/X)} \ll \frac {x}{mX}. \]

Now $\left( x/m\right)^{1/2+\varepsilon} \leq \frac {x}{mX}$ when $m \leq x/X^{2/(1-2\varepsilon)}$, so that we have for $1 < m \leq x/X^{2/(1-2\varepsilon)}$, 
\begin{align}
\label{EPmlarge2}
\begin{split}
\E \Biggl| \sum_{\substack{x/t < n \leq x/m, \\ x^{e^{-(k+1)}} \text{-smooth}}} h(n)\lambda(n) \Biggr|^{2} \ll \frac {x}{mX}.  
\end{split}
\end{align}
  We deduce from the above, \eqref{E2qMsmall} and \eqref{E2q-1} that for $1 < m \leq x/X^{2/(1-2\varepsilon)}$, 
\begin{align}
\label{E2qMsmallest}
\begin{split}
 \E \Biggl| \sum_{\substack{x/t < n \leq x/m, \\ x^{e^{-(k+1)}} \text{-smooth}}} h(n)\lambda(n) \Biggr|^{2q} \ll & \sqrt{\frac {x}{mX}  \Big(\frac{x}{m} \Big)^{2q-1} \log^{O(q^2)}x}.
\end{split}
\end{align}

 On the other hand, by a result of M. N. Huxley \cite{Huxley03} on the Dirichlet divisor problem , we have, 
\begin{equation*} 
  \sum_{n \leq x}d(n) = x\log x+(2\gamma_0-1)+O(x^{131/416+\varepsilon}),
\end{equation*}
where $\gamma_0$ is the Euler constant. From this, \eqref{lambdabound} and \eqref{EPmlarge2} for $x/X^{2/(1-2\varepsilon)} < m \leq x$, 
\begin{align*}
\begin{split}
 \Biggl| \sum_{\substack{x/t < n \leq x/m, \\ x^{e^{-(k+1)}} \text{-smooth}}} h(n)\lambda(n) \Biggl| \leq \sum_{\substack{x/t < n \leq x/m}} d(n) & \leq 
\frac {x}{m}\log \frac xm-\frac {x}{t}\log \frac xt+O \Big( \Big(\frac xm \Big)^{131/416+\varepsilon} \Big) \\
&  \ll \Big( \frac {x}{m}-\frac {x}{t} \Big)\log \frac xm+\Big(\frac xm\Big)^{1/2+\varepsilon},
\end{split}
\end{align*}
using the mean value theorem. This implies that for $x/X^{2/(1-2\varepsilon)} < m \leq x$, 
\begin{align*}
\begin{split}
 \Biggl| \sum_{\substack{x/t < n \leq x/m, \\ x^{e^{-(k+1)}} \text{-smooth}}} h(n)\lambda(n) \Biggl| \ll \frac {x}{Xm}\log \frac xm+\Big( \frac xm \Big)^{1/2+\varepsilon} \ll \Big( \frac xm \Big)^{1/2+\varepsilon}.
\end{split}
\end{align*}
  We thus conclude that for $x/X^{2/(1-2\varepsilon)} < m \leq x$, 
\begin{align}
\label{E2qMlargeest}
\begin{split}
 \E \Biggl| \sum_{\substack{x/t < n \leq x/m, \\ x^{e^{-(k+1)}} \text{-smooth}}} h(n)\lambda(n) \Biggr|^{2q} \ll & \Big(\frac xm \Big)^{q + 2 q \varepsilon}.
\end{split}
\end{align}
 
  We deduce from \eqref{secondtermest} upon setting $Y = x/X^{2/(1-2\varepsilon)}$, \eqref{E2qMsmallest} and \eqref{E2qMlargeest} that the second term in \eqref{uppersmoothingdisplay} is 
\begin{align}
\label{secondtermest1}
\begin{split}
 \ll & \sum_{0 \leq k \leq \mathcal{L}} \sqrt{\frac{x \log^{O(q)}x}{X^{1/2q}} \sum_{\substack{1 < m \leq Y , \\ p|m \Rightarrow x^{e^{-(k+1)}} < p \leq x^{e^{-k}}}} \frac{d_{\lceil q \rceil}(m)\lambda^2(m)}{m} + \sum_{\substack{Y < m \leq x , \\ p|m \Rightarrow x^{e^{-(k+1)}} < p \leq x^{e^{-k}}}} d_{\lceil q \rceil}(m)\lambda^2(m)\Big(\frac xm\Big)^{1+\varepsilon} } . 
\end{split}
\end{align}

Now
\begin{align}
\label{Eulerprodest}
\begin{split} 
\sum_{\substack{1 < m \leq Y , \\ p|m \Rightarrow x^{e^{-(k+1)}} < p \leq x^{e^{-k}}}} \frac{d_{\lceil q \rceil}(m)\lambda^2(m)}{m} \leq \prod_{x^{e^{-(k+1)}} < p \leq x^{e^{-k}}}\Big(1+\frac{d_{\lceil q \rceil}(p)\lambda^2(p)}{p}+\sum^{\infty}_{j=2}\frac{d_{\lceil q \rceil}(p^j)\lambda^2(p^j)}{p^{j}} \Big) . 
\end{split}
\end{align}  

  Recall that for any positive integer $k$,  $\sum_{n=1}^{\infty} \frac{d_{k}(n)}{n^s} = \prod_{p}(1 - p^{-s})^{-k}$, which implies that for any integer $a\geq 1$ and any prime $p$, we have
\begin{align}
\label{dpest}
\begin{split}
  d_k(p^a)=(-1)^a\binom {-k}{a}=\binom {a+k-1}{a}=\binom {a+k-1}{k-1}. 
\end{split}
\end{align}
  It follows that
\begin{align*}
\begin{split}
 d_k(p^a) \leq 
\displaystyle \begin{cases}
\frac {(2k-3)^{a}}{a!}, \quad & \text{if $a < k-1$}, \\
\frac {(2a)^{k-1}}{(k-1)!}, \quad & \text{if $a \geq k-1$}.
\end{cases}
\end{split}
\end{align*}
This, from the above and \eqref{lambdabound},
\begin{align}
\label{ppowersum}
\begin{split}
  \sum^{\infty}_{j=2}\frac{d_{\lceil q \rceil}(p^j)\lambda^2(p^j)}{p^{j}}=O\Big(\frac{d_{\lceil q \rceil}(p^2)}{p^{2}} \Big). 
\end{split}
\end{align}

  We note moreover infer from \eqref{dpest} that $d_{\lceil q \rceil}(m) \leq \lceil q \rceil^{\Omega(m)}$, where $\Omega(m)$ denotes the number of distinct prime powers dividing $m$. Using this and \eqref{ppowersum}, we deduce from \eqref{Eulerprodest} that
\begin{align}
\label{Eulerprodest1}
\begin{split} 
\sum_{\substack{1 < m \leq Y , \\ p|m \Rightarrow x^{e^{-(k+1)}} < p \leq x^{e^{-k}}}} \frac{d_{\lceil q \rceil}(m)\lambda^2(m)}{m} = e^{O(q)}. 
\end{split}
\end{align}  

  We further apply the estimation $d_{\lceil q \rceil}(m) \leq \lceil q \rceil^{\Omega(m)}$ together with \eqref{lambdabound} to see that $\lambda(m)^2 \leq d^2(m) \leq 4^{\Omega(m)}$. We also note that if $m \geq x^{1/20}$ only has prime factors from the interval $(x^{e^{-(k+1)}}, x^{e^{-k}}]$, then we must have $\Omega(m) \geq e^{k}/20$. We now apply Number Theory Result 1 in \cite{Harper20} by setting the parameters there to be $v=v, u=x^{1/20}, y=x^{e^{-(k+1)}}, z=x^{e^{-k}}, \delta = 1/2$.  For $v \geq x^{1/10}$, we get
\begin{align*}
\begin{split}
\sum_{\substack{x/X \leq m \leq x , \\ p|m \Rightarrow x^{e^{-(k+1)}} < p \leq x^{e^{-k}}}} & d_{\lceil q \rceil}(m)\lambda^2(m)
\leq (5^{20})^{-e^{k}/20}\sum_{\substack{x^{1/20} \leq m \leq v, \\ p | m \Rightarrow x^{e^{-(k+1)}} \leq p \leq x^{e^{-k}}}} (4 \cdot 5^{20}\lceil q \rceil)^{\Omega(m)} \\
 \ll & 5^{-e^{k}} \frac{qv}{\log(x^{e^{-(k+1)}})} \prod_{x^{e^{-(k+1)}} \leq p \leq x^{e^{-k}}}\left(1 - \frac{4 \cdot 5^{20}\lceil q \rceil}{p}\right)^{-1}  \ll e^{O(q)}2^{-e^{k}} \frac{v}{\log x} .
\end{split}
\end{align*}
 where the last estimation above follows from \eqref{merten}. \newline

The above and partial summation to see that
\begin{align}
\label{EEjremainder21-1}
\begin{split}
  \sum_{\substack{Y < m \leq x, \\ p | m \Rightarrow x^{e^{-(k+1)}} \leq p \leq x^{e^{-k}}}} d_{\lceil q \rceil}(m) \lambda^2(m) \Big(\frac xm \Big)^{1+\varepsilon} 
  \ll e^{O(q)}2^{-e^{k}} \frac{x}{\log x}.
\end{split}
\end{align}

  As we have $q \leq \log^{0.05}x$, $\mathcal{L} = \lfloor (\log\log x)/10 \rfloor$ and $X = e^{\sqrt{\log x}}$, we see from \eqref{secondtermest1}, \eqref{Eulerprodest1} and \eqref{EEjremainder21-1} that the second term on the right-hand side of \eqref{uppersmoothingdisplay} is
\begin{align}
\label{Secondtermest}
\begin{split} \leq \sum_{0 \leq k \leq \mathcal{L}} \sqrt{\frac{x \log^{O(q)}x}{X^{1/2q}} + e^{O(q)} 2^{-e^{k}} \frac{x}{\log x} } \leq e^{O(q)} \sqrt{\frac{x}{\log x}}. 
\end{split}
\end{align}

 It remains to treat the first sum in \eqref{uppersmoothingdisplay}, which equals to
$$ \sum_{0 \leq k \leq \mathcal{L}} \Big\| \int\limits_{x^{e^{-(k+1)}}}^{x} \Big| \sum_{\substack{n \leq x/t, \\ x^{e^{-(k+1)}} \text{-smooth}}} h(n)\lambda(n) \Big|^2 \sum_{\substack{t/(1+1/X) \leq m \leq t , \\ p|m \Rightarrow x^{e^{-(k+1)}} < p \leq x^{e^{-k}}}} \frac{X}{m} d_{\lceil q \rceil}(m)\lambda^2(m) \dif t \Big\|_{q}^{1/2} . $$

  We set $u = u(k,t) := e^{k} (\log t)/\log x$ to see that if $m \geq t/(1+1/X)$ only has prime factors from the interval $(x^{e^{-(k+1)}}, x^{e^{-k}}]$ then we must have $\Omega(m) \geq u$. Using again the inequality $d_{\lceil q \rceil}(m) \leq \lceil q \rceil^{\Omega(m)}, \lambda^2(m) \leq 4^{\Omega(m)}$,  from Number Theory Result 1 in \cite{Harper20} and \eqref{merten}, for $x$ large enough, we have
\begin{eqnarray*}
\sum_{\substack{t/(1+1/X) \leq m \leq t , \\ p|m \Rightarrow x^{e^{-(k+1)}} < p \leq x^{e^{-k}}}} \frac{X}{m} d_{\lceil q \rceil}(m)\lambda^2(m) & \ll & \frac{X}{t} 5^{-u} \sum_{\substack{t/(1+1/X) \leq m \leq t , \\ p|m \Rightarrow x^{e^{-(k+1)}} < p \leq x^{e^{-k}}}} (20 \lceil q \rceil)^{\Omega(m)} \\
& \ll & \frac{q e^k 5^{-u}}{\log x} \prod_{x^{e^{-(k+1)}} < p \leq x^{e^{-k}}} \Big(1-\frac{20 \lceil q \rceil}{p}\Big)^{-1} \ll \frac{e^{O(q)}}{\log t}. 
\end{eqnarray*}

 We then derive that the first sum in \eqref{uppersmoothingdisplay} is
\begin{align*}
\leq & e^{O(q)} \sum_{0 \leq k \leq \mathcal{L}} \Big\| \int\limits_{x^{e^{-(k+1)}}}^{x} \Biggl| \sum_{\substack{n \leq x/t, \\ x^{e^{-(k+1)}} \text{-smooth}}} h(n)\lambda(n) \Biggr|^2 \frac{\dif t}{\log t} \Big\|_{q}^{1/2} \\
= & e^{O(q)} \sum_{0 \leq k \leq \mathcal{L}} \sqrt{x} \Big\| \int\limits_{1}^{x^{1 - e^{-(k+1)}}} \Biggl| \sum_{\substack{n \leq z, \\ x^{e^{-(k+1)}} \text{-smooth}}} h(n)\lambda(n) \Biggr|^2 \frac{\dif z}{z^2 \log(x/z)} \Big\|_{q}^{1/2} ,
\end{align*}
where the last expression above follows from a the substitution $z=x/t$ in the first integral above. \newline

We then argue as in the proof of Propositions 4.1--4.2 in \cite{Harper23}.  Upon applying Lemma \ref{parseval} and Minkowski's inequality to see that the first sum in \eqref{uppersmoothingdisplay} is
\begin{align}
\label{Firsttermest}
\begin{split} 
\leq & \frac{\sqrt{x}}{\sqrt{\log x}} e^{O(q)} \sum_{0 \leq k \leq \mathcal{L}} \Big\| \int\limits_{1}^{x^{1 - e^{-(k+1)}}} \Biggl| \sum_{\substack{n \leq z, \\ x^{e^{-(k+1)}} \text{-smooth}}} f(n) \Biggr|^2 \frac{\dif z}{z^{2 + 2q/\log x - 2k/\log x}} \Big\|_{q}^{1/2}  \\
 \leq & \sqrt{\frac{x}{\log x}} e^{O(q)} \sum_{0 \leq k \leq \mathcal{L}} \Big\| \int\limits_{-\infty}^{\infty} \frac{|F_{k,f}(\tfrac12+\tfrac{q}{\log x} - \tfrac{k}{\log x} + it)|^2}{|\tfrac12+\tfrac{q}{\log x} - \tfrac{k}{\log x}  + it|^2} \dif t \Big\|_{q}^{1/2}  \\
 \leq & \sqrt{\frac{x}{\log x}} e^{O(q)} \sum_{0 \leq k \leq \mathcal{L}} \sqrt{\sum_{n \in \mz} \frac{1}{n^2 + 1} \Big\| \int\limits_{n-1/2}^{n+1/2} |F_{k,f}( \tfrac12+\tfrac{q-k}{\log x} + it)|^2 \dif t \Big\|_{q}}. 
\end{split}
\end{align}

  Note that the law of the random function $h(n)$ is the same as the law of $h(n)n^{it}$ for any fixed $t \in \mr$ for the Steinhaus case. Thus we have
\[ \Big\| \int\limits_{n-1/2}^{n + 1/2} |F_{k,f}(\tfrac12+\tfrac{q-k}{\log x} + it)|^2 \dif t \Big\|_{q} = \Big\| \int\limits_{-1/2}^{1/2} |F_{k,f}(\tfrac12+\tfrac{q-k}{\log x} + it)|^2 \dif t \Big\|_{q} \;\;\; \mbox{for all} \; n . \]

  We now conclude from \eqref{EPndecomp}, \eqref{sumdsmallsmoothS1}--\eqref{uppersmoothingdisplay}, \eqref{Secondtermest}, \eqref{Firsttermest} and the above that \eqref{EupperboundS} is valid. The proof of \eqref{EupperboundR} is very similar to the Steinhaus case above. We use the Rademacher part of Proposition \ref{prop3}, producing various terms $d_{2\lceil q \rceil - 1}(n)$ in place of $d_{\lceil q \rceil}(n)$  in the arguments above. We then follow the arguments until the very end, at which point we apply
\begin{eqnarray*}
 \sqrt{\sum_{n \in \mz} \frac{1}{n^2 + 1} \Big\| \int\limits_{n-1/2}^{n+1/2} |F_{k,f}(\tfrac12+\tfrac{q-k}{\log x} + it)|^2 \dif t \Big\|_{q}} \ll  \sqrt{\max_{N \in \mz} \frac{1}{(|N|+1)^{1/4}} \Big\| \int\limits_{N-1/2}^{N+1/2} |F_{k,f}(\tfrac12+\tfrac{q-k}{\log x} + it)|^2 \dif t \Big\|_{q} }  
\end{eqnarray*}
 to arrive at \eqref{EupperboundR}. This completes the proof of the proposition.
\end{proof}

\subsection{Lower bounds}
 
 Recall the function $F_{f}$ defined earlier so that $F_f$ is the partial Euler product of $h(n)\lambda(n)$ over $x$-smooth numbers.  Analoguous to \cite[Proposition 4.3--4.4]{harper2020moments}, we have the following result concerning the lower bounds of $\E|\sum_{n \leq x} h(n)\lambda(n)|^{2q}$. 
\begin{prop}
\label{propstlower}
 With the notation as above, let $h(n)$ be a Steinhaus random multiplicative function or a Rademacher random multiplicative function. We have uniformly for all $q \geq 1$ and large real $x$,
\begin{align}
\label{hlambdalower}
\begin{split}
\Big\| \sum_{n \leq x} h(n)\lambda(n) \Big\|_{2q} \gg \sqrt{\frac{x}{\log x}} \Big\| \int\limits_{1}^{x^{1/4}} \left|\sum_{m \leq z} h(m)\lambda(m) \right|^2 \frac{\dif z}{z^{2}} \Big\|_{q}^{1/2}.
\end{split}
\end{align}
 
 In particular, for any large fixed constant $V \leq (\log x)/q$ and any Steinhaus random multiplicative function $h(n)$, there is an absolute constant $C > 0$ such that we have 
\begin{align}
\label{hlambdalower1}
\begin{split}
 \Big\| \sum_{n \leq x} h(n)\lambda(n) \Big\|_{2q} \gg \sqrt{\frac{x}{\log x}} \Biggl( \Big\| \int_{-1/2}^{1/2} |F_f(\tfrac 12 + \tfrac{4Vq}{\log x} + it)|^2 \dif t \Big\|_{q}^{1/2} - \frac{C}{e^{Vq/2}} \Big\| \int\limits_{-1/2}^{1/2} |F_f(\tfrac 12 + \tfrac{2Vq}{\log x} + it)|^2 \dif t \Big\|_{q}^{1/2}  \Biggr).
\end{split}
\end{align}
 Similarly,  for any Rademacher random multiplicative function $h(n)$, we have 
\begin{align}
\label{hlambdalower2}
\begin{split}
\Big\| \sum_{n \leq x} h(n)\lambda(n) \Big\|_{2q}  \gg \sqrt{\frac{x}{\log x}} & \Biggl( \Big\| \int\limits_{-1/2}^{1/2} |F_f(\tfrac12 + \tfrac{4Vq}{\log x} + it)|^2 \dif t \Big\|_{q}^{1/2}  \\
& - \frac{C}{e^{Vq/2}} \max_{N \in \mz} \frac{1}{(|N|+1)^{1/8}} \Big\| \int\limits_{N-1/2}^{N+1/2} |F_f(\tfrac12 + \tfrac{2Vq}{\log x} + it)|^2 \dif t \Big\|_{q}^{1/2}   \Biggr) .
\end{split}
\end{align}
\end{prop}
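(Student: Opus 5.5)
We follow the proof of Propositions 4.3--4.4 in \cite{harper2020moments}, replacing $f(n)$ there by $h(n)\lambda(n)$ and using multiplicativity of $\lambda$.

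\textbf{Step 1: conditioning on the small primes.} For \eqref{hlambdalower}, restrict the sum to integers $n=pm$, where $p>x^{1/4}$ is a large prime and $m\le x/p$. Such $n$ have a unique large prime factor, and $\lambda(pm)=\lambda(p)\lambda(m)$ since $(p,m)=1$, so
\[
\sum_{n \leq x} h(n)\lambda(n)=\sum_{x^{1/4}<p\le x} h(p)\lambda(p)\sum_{m\le x/p} h(m)\lambda(m) + (\text{rest}),
\]
where in the rest every prime factor exceeding $x^{1/4}$ appears at most three times. Let $\E'$ denote expectation over $(h(p))_{p>x^{1/4}}$, conditional on the small primes. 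By a conditional Jensen/symmetrisation argument (as in Harper, using that the rest is independent in the relevant sense), $\E|\cdot|^{2q}$ is bounded below by the $2q$-th moment of the main term. Applying Proposition \ref{Khintchine} conditionally (its proof only uses independence of the $h(p)$), we get
\[
\E\Big|\sum_{n\le x}h(n)\lambda(n)\Big|^{2q}\gg \E\Big(\sum_{x^{1/4}<p\le x}\lambda^2(p)\Big|\sum_{m\le x/p}h(m)\lambda(m)\Big|^2\Big)^q .
\]

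\textbf{Step 2: smoothing the prime sum.} Replace the weights $\lambda^2(p)$ by a density via short-interval prime sums. Under GRH, \eqref{merten2-0} and \eqref{merten2} give $\sum_{t<p\le t(1+1/X)}\lambda^2(p)\gg t/(X\log t)$ for $t\ge x^{1/4}$ and suitable $X$ (a power of $\log x$ or $e^{\sqrt{\log x}}$). This is the ``short prime sum'' use of GRH mentioned in the introduction. Grouping $p$ into such intervals, on each interval the inner sum $\sum_{m\le x/p}$ is compared with $\sum_{m\le x/t}$. Substituting $z=x/t$, the main quantity becomes $\frac{x}{\log x}\int_1^{x^{3/4}}|\sum_{m\le z}h(m)\lambda(m)|^2\,\frac{dz}{z^2}$, which dominates the range $z\le x^{1/4}$ and yields \eqref{hlambdalower}.

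The error from replacing $x/p$ by $x/t$ involves sums over $x/(t(1+1/X))<m\le x/t$. We would like to drop it using only positivity rather than bound it. The cleanest route is to pick, for each $z$, the primes $p$ with $x/p\in[z,z(1+1/X)]$ and use $|a|^2\ge \frac12|b|^2-|a-b|^2$. The difference terms are then controlled in $L^q$ by Proposition \ref{prop3}, giving mean square of order $z/X$ times divisor-weighted sums, which is negligible after integration against $dz/z^2$. Controlling this error uniformly in $q$ is the step I expect to be the main obstacle, since the moment bound must not lose a factor $\log^{O(q^2)}$ relative to the main term.

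\textbf{Step 3: passing to Euler products.} For \eqref{hlambdalower1} and \eqref{hlambdalower2}, insert the factor $z^{-8Vq/\log x}\le 1$ into the integral and extend it to $\int_1^\infty$. The tail $\int_{x^{1/4}}^\infty$ carries the weight $z^{-8Vq/\log x}\le e^{-2Vq}z^{-4Vq/\log x}$. Applying Lemma \ref{parseval} to the full integral with $\sigma=1/2+4Vq/\log x$, and to the tail with $\sigma=1/2+2Vq/\log x$, expresses both as $\int|F_f(\sigma+it)|^2/|\sigma+it|^2\,dt$. Here $\sum_{m\le z}h(m)\lambda(m)$ equals the $x$-smooth sum for $z\le x$, and we handle $z>x$ by working with the $x$-smooth sum throughout. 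The factor $1/|\sigma+it|^2\asymp 1$ on $[-1/2,1/2]$ gives the main term. For the tail we bound by $\sum_N (N^2+1)^{-1}\|\int_{N-1/2}^{N+1/2}\cdot\|_q$ and use Minkowski's inequality together with $\|A-B\|\ge\|A\|-\|B\|$ after taking square roots. In the Steinhaus case the translation invariance of $h(n)n^{it}$ collapses the sum over $N$ to the single window $[-1/2,1/2]$. In the Rademacher case we keep $\max_N (|N|+1)^{-1/8}$, exactly as at the end of Proposition \ref{propstupper}. Finally, $e^{-2Vq}$ raised to the power $1/2$ in the norm gives the factor $C e^{-Vq/2}$ (with room to spare).
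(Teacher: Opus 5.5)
There is a genuine gap in Step 2, at exactly the point you flag. Grouping the primes into windows $[t,t(1+1/X)]$ creates an error from replacing $\sum_{m\le x/p}$ by $\sum_{m\le x/t}$, and you do not control it. The route you sketch through Proposition \ref{prop3} only bounds its $L^q$ contribution to the normalised integral by roughly $X^{-1}\sum_{m\le x}d_{\lceil q\rceil}(m)\lambda^2(m)/m=X^{-1}(\log x)^{O(q)}$. For $X=e^{\sqrt{\log x}}$ or a power of $\log x$, this is not small against the trivial lower bound $\int_1^2\dif z/z^2=\tfrac12$ once $q$ grows. In any case it grows with $q$, whereas \eqref{hlambdalower} is claimed uniformly in $q\ge1$.

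The missing observation is that there is no error at all: $\sum_{m\le x/p}h(m)\lambda(m)$ depends only on $r=\lfloor x/p\rfloor$. The paper groups the primes as $p\in(x/(r+1),x/r]$ with $r\le x^{1/4}$, so the weighted sum equals exactly $\sum_{r\le x^{1/4}}|\sum_{m\le r}h(m)\lambda(m)|^2\sum_{x/(r+1)<p\le x/r}\lambda^2(p)\log p$. Under GRH the inner prime sum is $x/(r(r+1))+O((x/r)^{1/2}\log^2x)\gg x/(r(r+1))=\int_{x/(r+1)}^{x/r}\dif t$. This works because $x/(r(r+1))\gg(x/r)^{2/3}$ precisely when $r\le x^{1/4}$, which is where the exponent $1/4$ in the statement comes from. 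Since $\sum_{m\le x/t}=\sum_{m\le r}$ for every $t$ in that interval, you obtain the lower bound $\int_{x^{3/4}}^{x}|\sum_{m\le x/t}h(m)\lambda(m)|^2\dif t=x\int_1^{x^{1/4}}|\sum_{m\le z}h(m)\lambda(m)|^2\dif z/z^2$ directly. The constants are independent of $q$.

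Step 1 also needs repair. With $p>x^{1/4}$, several things go wrong:
\begin{itemize}
\item an $n\le x$ can have two or three prime factors above $x^{1/4}$;
\item $p\mid m$ can occur, so $\lambda(pm)=\lambda(p)\lambda(m)$ fails in general;
\item $a_p=\sum_{m\le x/p}h(m)\lambda(m)$ involves $h(p')$ for $x^{1/4}<p'<x^{3/4}$, so the coefficients are not determined by the variables you condition on, and the conditional use of Proposition \ref{Khintchine} is not legitimate;
\item the random-sign symmetrisation keeps every term with an odd number of prime factors above $x^{1/4}$, not your main term.
\end{itemize}
The paper instead restricts to $P(n)>x^{3/4}$. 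Then $n=pm$ with a unique $p>x^{3/4}$ and $m\le x^{1/4}$, and $a_p$ depends only on $(h(p'))_{p'\le x^{1/4}}$. This is all that \eqref{hlambdalower} requires.

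Step 3 agrees with the paper's (that is, Harper's) argument, with one arithmetic slip. For $z\ge x^{1/4}$ one has $z^{-4Vq/\log x}\le e^{-Vq}$, not $e^{-2Vq}$. After the square root this gives exactly the factor $e^{-Vq/2}$ in the statement, with no room to spare.
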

\begin{proof}
 Recall that $P(n)$ denotes the largest prime factor of $n$. Let $\epsilon$ be an auxiliary Rademacher random variable that is independent of everything else. We argue in a manner similar to the Proof of Propositions 4.3 and 4.4. in \cite{harper2020moments} and arrive at, $q \geq 1$,
\begin{eqnarray*}
\Big\| \sum_{n \leq x} h(n)\lambda(n) \Big\|_{2q}  \geq  \Big\| \sum_{\substack{n \leq x, \\ P(n) > x^{3/4}}} h(n)\lambda(n) \Big\|_{2q}.
\end{eqnarray*}

 Note that $\sum_{\substack{n \leq x, \\ P(n) > x^{3/4}}} h(n)\lambda(n) = \sum_{x^{3/4} < p \leq x} h(p)\lambda(p) \sum_{m \leq x/p} h(m)\lambda(m)$. Now the inner sums are determined by the values $(h(p)\lambda(p))_{p \leq x^{1/4}}$, which are independent of $(h(p)\lambda(p))_{x^{3/4} < p \leq x}$.  Thus applying \eqref{KTlower} with $a_p = \sum_{m \leq x/p} h(m)\lambda(m)$ renders
\begin{align*}
\begin{split} 
\Big\| \sum_{\substack{n \leq x, \\ P(n) > x^{3/4}}} h(n)\lambda(n) \Big\|_{2q} \geq & \Big\| \sum_{x^{3/4} < p \leq x} \lambda^2(p) \Big| \sum_{m \leq x/p} h(m)\lambda(m) \Big|^2 \Big\|_{q}^{1/2} \\
\geq & \frac{1}{\sqrt{\log x}} \Big\| \sum_{x^{3/4} < p \leq x} \lambda^2(p)\log p \Big|\sum_{m \leq x/p} h(m)\lambda(m) \Big|^2 \Big\|_{q}^{1/2} . 
\end{split}
\end{align*}

Now setting $r=\lfloor x/p \rfloor$, we get
\begin{align}
\label{hlambdalower3}
\begin{split} \sum_{x^{3/4} < p \leq x} \lambda^2(p)\log p \left|\sum_{m \leq x/p} h(m)\lambda(m) \right|^2 = \sum_{r \leq x^{1/4}} \sum_{x/(r+1) < p \leq x/r} \lambda^2(p)\log p \left|\sum_{m \leq r} h(m)\lambda(m) \right|^2. 
\end{split}
\end{align}
Here we observe that $x/r - x/(r+1) = x/(r(r+1)) \gg (x/r)^{2/3}$ for $r \leq x^{1/4}$. Moreover, from \eqref{merten2-0} and \eqref{merten2}, under GRH,  
\begin{align*}
\begin{split} 
\sum_{p \leq x}\lambda^2(p)\log p=x+O(x^{1/2}\log^2x). 
\end{split}
\end{align*}
The above lead to, for $r \leq x^{1/4}$, 
\begin{align}
\label{shortprimesum}
\begin{split} 
\sum_{x/(r+1) < p \leq x/r} \lambda^2(p)\log p \gg \int\limits_{x/(r+1)}^{x/r} 1 \dif t. 
\end{split}
\end{align}
The above and \eqref{hlambdalower3} yield
$$ \sum_{x^{3/4} < p \leq x} \lambda^2(p)\log p \left|\sum_{m \leq x/p} h(m)\lambda(m) \right|^2 \gg \sum_{r \leq x^{1/4}} \int\limits_{x/(r+1)}^{x/r} 1 \ \dif t \left|\sum_{m \leq r} h(m)\lambda(m) \right|^2 \geq \int\limits_{x^{3/4}}^{x} \left|\sum_{m \leq x/t} h(m)\lambda(m) \right|^2 \dif t . $$
 We now make a substitution $z = x/t$ in the last integral above to see that it equals $x \int_{1}^{x^{1/4}} \left|\sum_{m \leq z} h(m)\lambda(m) \right|^2 \dif z/z^2$. This implies the validity of \eqref{hlambdalower}. \newline

The estimations given in \eqref{hlambdalower1} and \eqref{hlambdalower2} are then proved using  \eqref{hlambdalower} and using Lemma \ref{parseval}, by following the arguments in the proof of Propositions 4.3 and 4.4 in \cite{harper2020moments}. This completes the proof of the proposition.
\end{proof}

\section{Proof of Theorems \ref{upperboundsfixedmodmeanS} and \ref{upperboundsfixedmodmeanR}}

To establish Theorems \ref{upperboundsfixedmodmeanS} and \ref{upperboundsfixedmodmeanR}, it suffices to prove \eqref{mainestimationupper} and \eqref{mainestimationlower}. We start by proving \eqref{mainestimationupper}.  To this end, we divide into two cases according to the size of $q$.  

\subsection{Proof of \eqref{mainestimationupper}, large $q$}
\label{seceasycases}

In this section, we prove \eqref{mainestimationupper} for $\log\log x \leq q \leq c\log x/\log\log x$. 

\subsubsection{Upper bounds, Steinhaus case}
\label{secsteineasy}
 As shown in the proof of \cite[The upper bound in the Steinhaus case, for very large $q$]{harper2020moments}, to establish \eqref{mainestimationupper} for the Steinhaus case when $\log\log x \leq q \leq c\log x/\log\log x$, it suffices to show that
\begin{align}
\label{UpperboundqlargeS}
\begin{split} 
 \Big\| \sum_{n \leq x} h(n)\lambda(n) \Big\|_{2q} \leq \exp \Big( -\tfrac{q}{2} \log q - \tfrac{q}{2} \log\log(2q) + O(q) \Big) \sqrt{x} \log^{q/2}x.  
\end{split}
\end{align}
 To prove this, we first apply Minkowski's inequality to obtain that
\begin{align*}
\begin{split} 
 \Big\| \sum_{n \leq x} h(n)\lambda(n) \Big\|_{2q} = \Big\| \sum_{\substack{m \leq x, \\ m \; \text{is} \; q^2 \; \text{smooth}}} h(m)\lambda(m) \sum_{\substack{n \leq x/m, \\ p \mid n \Rightarrow p > q^2}} h(n)\lambda(n) \Big\|_{2q} \leq \sum_{\substack{m \leq x, \\ m \; \text{is} \; q^2 \; \text{smooth}}}  \lambda^2(m)\Big\| \sum_{\substack{n \leq x/m, \\ p \mid n \Rightarrow p > q^2}} h(n)\lambda(n) \Big\|_{2q} . 
\end{split}
\end{align*}
 Using Proposition \ref{prop3}, and then using Rankin's trick of upper bounding $\textbf{1}_{n \leq x/m}$ by $(\frac{x}{nm})^{1+q/\log x}$, we get
\begin{eqnarray*}
\Big\| \sum_{\substack{n \leq x/m, \\ p \mid n \Rightarrow p > q^2}} h(n)\lambda(n) \Big\|_{2q} \leq \Biggl (\sum_{\substack{n \leq x/m, \\ p \mid n \Rightarrow p > q^2}} d_{\lceil q \rceil}(n)\lambda^2(n) \Biggr)^{1/2} & \leq & \Biggl ( \Big(\frac{x}{m}\Big)^{1 + q/\log x} \sum_{\substack{n \\ p \mid n \Rightarrow p > q^2}} \frac{d_{\lceil q \rceil}(n)\lambda^2(n)}{n^{1+q/\log x}} \Biggr)^{1/2}.
\end{eqnarray*}

  Note that
\begin{align*}
\begin{split}
  \sum_{\substack{p \mid n \Rightarrow p > q^2}} \frac{d_{\lceil q \rceil}(n)\lambda^2(n)}{n^{1+q/\log x}}=\prod_{p > q^2}\Big(1+\frac{d_{\lceil q \rceil}(p)\lambda^2(p)}{p^{1+q/\log x}}+\sum^{\infty}_{j=2}\frac{d_{\lceil q \rceil}(p^j)\lambda^2(p^j)}{p^{j(1+q/\log x)}}\Big). 
\end{split}
\end{align*}

Now this, \eqref{lambdabound} and \eqref{dpest} render that
\begin{align*}
\begin{split}
  \sum^{\infty}_{j=2}\frac{d_{\lceil q \rceil}(p^j)\lambda^2(p^j)}{p^{j(1+q/\log x)}}=O\Big(\frac{d_{\lceil q \rceil}(p^2)}{p^{2(1+q/\log x)}}\Big). 
\end{split}
\end{align*}

  Similarly, we have
\begin{align*}
\begin{split}
  \prod_{p > q^2}\Big(1 - \frac{\lambda^2(p)}{p^{1 + q/\log x}}\Big)^{ \lceil q \rceil}=1-\frac{d_{\lceil q \rceil}(p)\lambda^2(p)}{p^{1 + q/\log x}}+O\Big(\frac{\lceil q \rceil^2}{p^{2(1+q/\log x)}}\Big). 
\end{split}
\end{align*}
  It follows, after computation similar to that in \eqref{sumdlambdapsmall},
\begin{align}
\label{sumdlambdaplarge}
\sum_{\substack{p \mid n \Rightarrow p > q^2}} \frac{d_{\lceil q \rceil}(n)\lambda^2(n)}{n^{1+q/\log x}} 
= \prod_{p > q^2}\Big(1 - \frac{\lambda^2(p)}{p^{1 + q/\log x}} \Big)^{- \lceil q \rceil}\prod_{p > q^2}\Big(1+O\Big(\frac{d_{\lceil q \rceil}(p^2)}{p^{2(1+q/\log x)}}\Big)\Big).
\end{align}

  Using the well-known inequality $1+x \leq e^x$ for all real $x$, we see that
\begin{align*}
\begin{split}
 \prod_{p > q^2}\Big(1+O\Big(\frac{d_{\lceil q \rceil}(p^2)}{p^{2(1+q/\log x)}}\Big)\Big) \leq \exp \Big(\sum_{p > q^2}\Big(O\frac{d_{\lceil q \rceil}(p^2)}{p^{2(1+q/\log x)}}\Big)\Big) \ll 1.
\end{split}
\end{align*}

  We thus conclude that
\begin{align}
\label{hlambdaupper}
\begin{split}
 \Big\| \sum_{\substack{n \leq x/m, \\ p \mid n \Rightarrow p > q^2}} h(n)\lambda(n) \Big\|_{2q} \leq & \sqrt{\frac{x}{m}} e^{O(q)} \prod_{p > q^2}\Big(1 - \frac{\lambda^2(p)}{p^{1 + q/\log x}}\Big)^{- \lceil q \rceil/2} \\
=&\sqrt{\frac{x}{m}} e^{O(q)} \prod_{p }\Big(1 - \frac{\lambda^2(p)}{p^{1 + q/\log x}}\Big)^{- \lceil q \rceil/2}\prod_{p \leq q^2}\Big(1 - \frac{\lambda^2(p)}{p^{1 + q/\log x}}\Big)^{\lceil q \rceil/2}.
\end{split}
\end{align}
In the interest of brevity, we write
\begin{align}
\label{Fsdef}
\begin{split}
 F(s)=\prod_{p }\Big(1 - \frac{\lambda^2(p)}{p^{s}}\Big)^{- 1}, \quad \mbox{for} \; \Re (s) >1.
\end{split}
\end{align}
  Then from \eqref{lambdabound}, for $\Re(s)>1$, 
\begin{align*}
\begin{split}
 F(s)=\zeta(s)\prod_{p }\Big(1 - \frac{1}{p^{s}}\Big)\Big(1 - \frac{\lambda^2(p)}{p^{s}}\Big)^{- 1} & =\zeta(s)\prod_{p }\Big(1 - \frac{1}{p^{s}}\Big)\Big(1 + \frac{\lambda^2(p)}{p^{s}}+O\Big(\frac 1{p^{2s}}\Big)\Big) \\
 & =\zeta(s)\prod_{p }\Big(1 + \frac{\lambda^2(p)-1}{p^{s}}+O\Big(\frac 1{p^{2s}}\Big)\Big).
\end{split}
\end{align*}
  Now, by \eqref{alphalambdarel}, we see that $\lambda^2(p)-1=\lambda(p^2)$, so that by \eqref{lambdabound} and \eqref{Lsymexp} we have
\begin{align}
\label{Fsdecomp}
\begin{split}
 F(s)= \zeta(s)\prod_{p }\Big(1 + \frac{\lambda(p^2)}{p^{s}}+O\Big(\frac 1{p^{2s}}\Big)\Big)= & \frac {\zeta(s) L(s, \operatorname{sym}^2 f)}{\zeta(2s)}\prod_{p}\left( 1-\frac {\lambda(p^2)}{p^s}+\frac {\lambda(p^2)}{p^{2s}}-\frac {1}{p^{3s}} \right)\Big(1 + \frac{\lambda(p^2)}{p^{s}}+O\Big(\frac 1{p^{2s}}\Big)\Big) \\
=& \frac {\zeta(s) L(s, \operatorname{sym}^2 f)}{\zeta(2s)}\prod_{p}\Big(1+O\Big(\frac {1}{p^{2s}}\Big)\Big). 
\end{split}
\end{align}
  We substitute the above into \eqref{hlambdaupper} to see that
\begin{align}
\label{hlambdaupper1}
\begin{split}
 \Big\| \sum_{\substack{n \leq x/m, \\ p \mid n \Rightarrow p > q^2}} h(n)\lambda(n) \Big\|_{2q} \leq &\sqrt{\frac{x}{m}} e^{O(q)} \frac {\zeta(1 + q/\log x)^{\lceil q \rceil/2} L(1 + q/\log x, \operatorname{sym}^2 f)^{\lceil q \rceil/2}}{\zeta(2(1 + q/\log x))^{\lceil q \rceil/2}} \\
  & \hspace*{2cm} \times \prod_{p}\Big(1+O\Big(\frac {1}{p^{2(1 + q/\log x)}}\Big)\Big)^{\lceil q \rceil/2}\prod_{p \leq q^2}\Big(1 - \frac{\lambda^2(p)}{p^{1 + q/\log x}}\Big)^{\lceil q \rceil/2} \\
= &\sqrt{\frac{x}{m}} e^{O(q)} \frac {\zeta(1 + q/\log x)^{\lceil q \rceil/2} L(1 + q/\log x, \operatorname{sym}^2 f)^{\lceil q \rceil/2}}{\zeta(2(1 + q/\log x))}\prod_{p \leq q^2}\Big(1 - \frac{\lambda^2(p)}{p^{1 + q/\log x}}\Big)^{\lceil q \rceil/2}.
\end{split}
\end{align}  

It follows from the proof of \cite[Lemma 2.4]{G&Zhao25-11} that
\begin{align}
\label{Lsymbound}
\begin{split}
 L(1 + \tfrac q{\log x}, \operatorname{sym}^2 f) \ll 1.
\end{split}
\end{align}  
  Also, from \cite[Corollary 1.7]{MVa1} and \cite[Theorem 6.7]{MVa1}, we deduce that
\begin{align}
\label{zeta2sbound1}
\begin{split}
   \zeta(1 + q/\log x) = \frac {\log x}{q}+O(1), \quad \frac {1}{\zeta(2(1 + q/\log x))} \ll  1. 
\end{split}
\end{align}
  We apply \eqref{Lsymbound} and \eqref{zeta2sbound1} to see from \eqref{hlambdaupper1} that
\begin{align*}
\begin{split}
 & \Big\| \sum_{\substack{n \leq x/m, \\ p \mid n \Rightarrow p > q^2}} h(n)\lambda(n) \Big\|_{2q} 
\leq \sqrt{\frac{x}{m}} e^{O(q)} \Big(\frac {\log x}{q}\Big)^{\lceil q \rceil/2}\prod_{p \leq q^2}\Big(1 - \frac{\lambda^2(p)}{p^{1 + q/\log x}}\Big)^{\lceil q \rceil/2}.
\end{split}
\end{align*}  
Now
\begin{align*}
\begin{split}
 \prod_{p \leq q^2}\Big(1 - \frac{\lambda^2(p)}{p^{1 + q/\log x}}\Big)^{\lceil q \rceil/2}=\exp \Big(\frac {\lceil q \rceil}2\sum_{p \leq q^2}\Big(1- \frac{\lambda^2(p)}{p^{1 + q/\log x}}\Big)\Big)=e^{O(q)} \exp \Big(-\frac {\lceil q \rceil}2\sum_{p \leq q^2}\frac{\lambda^2(p)}{p^{1 + q/\log x}} \Big).
\end{split}
\end{align*}  
  We may now take the small constant $c$ in the statement of Theorem \ref{upperboundsfixedmodmeanS} to satisfy $c<1$ so that we have $q\log p <\log x$ for $\log\log x \leq q \leq \frac{c\log x}{\log\log x}, p \leq q^2$. It follows from this, \eqref{lambdabound} and \eqref{merten1} that 
\begin{align*}
\begin{split}
 \sum_{p \leq q^2}\frac{\lambda^2(p)}{p^{1 + q/\log x}}=& \sum_{p \leq q^2}\frac{\lambda^2(p)}{p}+\sum_{p \leq q^2}\frac{\lambda^2(p)}{p}(e^{-q\log p/\log x}-1)  \\
 = & \log \log q+O\Big(1+\sum_{p \leq q^2}\frac{q\log p}{p\log x}\Big) = \log \log q+O\Big(1+\frac{q\log q}{\log x}\Big),
\end{split}
\end{align*}  
 where the last estimation above follows from \eqref{merten2-0} and partial summation. We observe that $q\log q/\log x \ll 1$ when $\log\log x \leq q \leq c\log x/\log\log x$ so that we have
\begin{align}
\label{hlambdaupper4}
\begin{split}
 \sum_{p \leq q^2}\frac{\lambda^2(p)}{p^{1 + q/\log x}}
=& \log \log q+O(1). 
\end{split}
\end{align}  
From \eqref{hlambdaupper1}--\eqref{hlambdaupper4}
\begin{align*}
\begin{split}
 & \Big\| \sum_{\substack{n \leq x/m, \\ p \mid n \Rightarrow p > q^2}} h(n)\lambda(n) \Big\|_{2q} 
\leq \sqrt{\frac{x}{m}} e^{O(q)} \Big(\frac {\log x}{q\log q}\Big)^{\lceil q \rceil/2}.
\end{split}
\end{align*}  
  We now sum the above over $m$ by noting that 
  \[ \sum_{\substack{m \leq x, \\ m \; \text{is} \; q^2 \; \text{smooth}}} \frac{\lambda^2(m)}{\sqrt{m}} \leq \exp \Big( \sum_{p \leq q^2} O(1/\sqrt{p}) \Big) \leq \exp\Big( O(q) \Big). \]
  This leads to the estimation given in \eqref{UpperboundqlargeS}. 

\subsubsection{Upper bounds, Rademacher case}
 Similar to the Steinhaus case, in order to prove  \eqref{mainestimationupper} for the Rademacher case when $\log\log x \leq q \leq c\log x/\log\log x$, it suffices to show that, for Rademacher random multiplicative $h(n)$, we have
\begin{align}
\label{UpperboundqlargeR}
\begin{split}  \Big\| \sum_{n \leq x} h(n)\lambda(n) \Big\|_{2q} \leq \exp \Big( -q\log q - q\log\log(2q) + O(q) \Big) \sqrt{x} \log^{q}x. 
\end{split}
\end{align} 

Using Minkowski's inequality, Proposition \ref{prop3}, and then Rankin's trick, we get
\begin{align*}
\begin{split}
\Big\| \sum_{n \leq x} h(n)\lambda(n) \Big\|_{2q}  \leq & \sum_{\substack{m \leq x, \\ m \; \text{is} \; q^2 \; \text{smooth}}}  \lambda^2(m)\Big\| \sum_{\substack{n \leq x/m, \\ p \mid n \Rightarrow p > q^2}} h(n)\lambda(n) \Big\|_{2q} \leq \sum_{\substack{m \leq x, \\ m \; \text{is} \; q^2 \; \text{smooth}}} \lambda^2(m)\Biggl (\sum_{\substack{n \leq x/m, \\ p \mid n \Rightarrow p > q^2}}d_{2\lceil q \rceil - 1}(n)\lambda^2(n)  \Biggr)^{1/2} \\
 \leq & \sum_{\substack{m \leq x, \\ m \; \text{is} \; q^2 \; \text{smooth}}} \sqrt{\frac{x}{m}}\lambda^2(m) e^{O(q)} \Biggl (\sum_{\substack{ n \\ p \mid n \Rightarrow p > q^2}} \frac{d_{2\lceil q \rceil-1}(n)\lambda^2(n)}{n^{1+q/\log x}}\Biggr)^{1/2}. 
\end{split}
\end{align*} 
 We now estimate the last expression above in a way similar to the Steinhaus case.  This lead to the bound in \eqref{UpperboundqlargeR}.

\subsection{Proof of \eqref{mainestimationupper}, small $q$}
\label{seccasesqsmall}

Note that the estimation given in \eqref{mainestimationupper} is valid when $q=1$, by evaluating $\E |\sum_{n \leq x} h(n)\lambda(n)|^{2}$ directly via expanding out the square, using the independence of $h(p)$ and \eqref{lambdasquareasymp}. We may thus assume that $q>1$ in what follows. \newline

  Similar to what is shown in \cite[Section 5]{harper2020moments}, we see that in order to \eqref{mainestimationupper}, it suffices to establish the following result. 
\begin{prop}
 With the notation as above and let $q_0 = (1 + \sqrt{5})/2$. For all large $x$, we have uniformly for $1 \leq q \leq \log^{100}x$ and $-1 \leq k \leq \mathcal{L} = \lfloor (\log\log x)/10 \rfloor$ and $- \frac{e^k}{\log x} \leq \sigma \leq \frac{1}{100\log(2q)}$, 
\begin{align}
\label{Ehalfq}
\begin{split} 
 \E\Big( \int\limits_{-1/2}^{1/2} |F_{k,f}(\tfrac12 + \sigma + it)|^2 \dif t \Big)^q \ll 
\begin{cases}
\frac{e^{O(q^2)}}{\log^{q-1}x} \Big( \frac{\log x}{\log 2q} \Big)^{q^2} \min\Big\{\frac{1}{e^{k+1}}, \frac{1}{|\sigma| \log x} \Big\}^{q^2 - q + 1}, \quad &
\text{if $h$ is Steinhaus}, \\
   \frac{e^{O(q^2)}}{\log^{q}x} \Big(1 + \min\Big\{\log\log x, \frac{1}{|q-q_0|} \Big\} \Big) \Big( \frac{\log x}{\log 2q} \Big)^{\max\{2q^2 - q, q^2 +1\}} & \\
\hspace*{1cm} \times \min\Big\{\frac{1}{e^{k+1}}, \frac{1}{|\sigma| \log x} \Big\}^{\max\{2q^2 - 2q, q^2 - q + 1\}}, \quad & \text{if $h$ is Rademacher}.
\end{cases}
\end{split}
\end{align}
  Furthermore, for any Rademacher random multiplicative function $h(n)$, we have for $|N| \geq 1$, 
\begin{align}
\label{EhalfqNlargeR}
\begin{split} 
\E\Big( \int\limits_{N-\frac{1}{2}}^{N+\frac{1}{2}} |F_{k,f}(\tfrac{1}{2} + \sigma + it)|^2 \dif t \Big)^q  \ll & \min\Big\{|N|^{\frac{1}{100}}, \frac{\log x}{e^{k+1} \log 2q}, \frac{1}{|\sigma| \log 2q}\Big\}^{q(q+1)}  \\
& \hspace*{1cm} \times \frac{e^{O(q^2)}}{\log^{q-1}x} \Big( \frac{\log x}{\log 2q} \Big)^{q^2} \min\Big\{\frac{1}{e^{k+1}}, \frac{1}{|\sigma| \log x} \Big\}^{q^2 - q + 1} .
\end{split}
\end{align}
\end{prop}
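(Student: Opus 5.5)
We follow the strategy of Harper's Key Propositions 1--2 in \cite{harper2020moments}, replacing his Euler product computations by Proposition \ref{prop1} and its Rademacher counterpart; the factors $\lambda^2(p)$ only change constants because of \eqref{merten1} and \eqref{merten4}.

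\textbf{Splitting the product.} Write $P=x^{e^{-(k+1)}}$ and split $F_{k,f}(\tfrac12+\sigma+it)=F^{\rm small}(t)F^{\rm large}(t)$. Here $F^{\rm small}$ is the product over $p\le q^{2}$, or more precisely $p\le 100(1+q^2)$, so that the hypothesis of Proposition \ref{prop1} holds for the remaining primes. $F^{\rm large}$ is the product over $q^2<p\le P$. The small primes cost only $e^{O(q^2)}$ in the Steinhaus case. To see this, for $\Re s=\tfrac12+\sigma$ with $\sigma\ge -1/\log P$ each local factor is $\ll (1-p^{-1/2+o(1)})^{-2}$, and $\sum_{p\le q^2}p^{-1/2}\ll q/\log q$; raising to the power $2q$ gives $e^{O(q^2)}$. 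The same bound holds in the Rademacher case. With $F^{\rm small}$ removed, the factor $(\log x/\log 2q)^{q^2}$ appears as $\prod_{q^2<p\le P}$ in place of $\prod_{p\le P}$.

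\textbf{Integer $q$, Steinhaus case.} For integer $q$ I would expand
\[
\Big(\int_{-1/2}^{1/2}|F|^2\,dt\Big)^q=\int_{[-1/2,1/2]^q}\prod_{j=1}^q |F(t_j)|^2\,dt_1\cdots dt_q,
\]
and take expectations inside. The expectation of a product of $q$ factors at different shifts factorises over primes. Each prime contributes $\exp\big(\lambda^2(p)p^{-1-2\sigma}\sum_{j,l}\cos((t_j-t_l)\log p)\big)$, which is the multi-point analogue of \eqref{Eprod}, proved identically. Summing over primes via \eqref{merten4} gives
\[
\prod_{j<l}\min\Big\{\frac{\log P}{\log 2q},\ \frac{1}{|t_j-t_l|\log 2q}\Big\}^2\Big(\frac{\min\{\log P,\,1/|\sigma|\}}{\log 2q}\Big)^q .
\]
Integrating over the $t_j$ then reproduces the stated exponents $q^2-q+1$ and the $\log^{-(q-1)}x$ loss, exactly as in Harper.

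\textbf{General real $q$.} For non-integer $q$ the expansion above is not available, and this is where I expect the main difficulty. Following Harper, I would condition on the primes up to a level $x^{e^{-j}}$. Using H\"older's inequality, I would reduce to estimating
\[
\E\,|F(t_1)|^{2}\,|F(t_2)|^{2(q-1)}
\]
at two points only, which Proposition \ref{prop1} supplies with $\alpha=1$ and $\beta=q-1$. The obstacle is controlling the range where $|t_1-t_2|$ is small, where the short-interval integral behaves like a maximum. For this I would invoke Harper's barrier, or Girsanov-type, argument: I would restrict to the event that the partial random walks $\log|F_{j}|$ stay below $(q+\epsilon)\log\log$ barriers. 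On the complement I would use Proposition \ref{prop4} (Doob) together with \eqref{Eprod1} to bound the contribution. This step is what produces the factor $(\log x/\log 2q)^{q^2}$ with only an $e^{O(q^2)}$ loss, the small-$q$ factor $\log 2q$ coming from $\sigma\le 1/(100\log 2q)$.

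\textbf{Rademacher case.} Here I would use \eqref{EprodR}--\eqref{Eprod1R1}. The additional cosine terms $\cos(2t\log p)$ and $\cos((t_1+t_2)\log p)$ create extra mass near $t=0$. This yields the competing exponents $2q^2-q$ versus $q^2+1$, and the transition at $q_0$, where $2q^2-2q=q^2-q+1$. There the $t$-integral $\int|t|^{-(\cdot)}$ gives the factor $1+\min\{\log\log x,1/|q-q_0|\}$. For \eqref{EhalfqNlargeR}, with $|N|\ge1$ the terms in $t_1+t_2$ are harmless. The $(|t_1|+|t_2|)^{1/100}$ correction in \eqref{Eprod1R1} gives the additional factor $\min\{|N|^{1/100},\dots\}^{q(q+1)}$, and otherwise the argument is the Steinhaus one.
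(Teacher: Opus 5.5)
\textbf{Overall.} Your proposal covers $q=1$ and $q\ge 2$, but it has a genuine gap for non-integer $1<q<2$.

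\textbf{What works.} Your integer-$q$ expansion is a legitimate alternative route when $q$ is an integer. One slip there: the off-diagonal factors must be capped at $\mathcal{X}:=\min\{\log x/e^{k+1},1/|\sigma|\}$, not at $\log P$, or you overshoot when $\sigma>0$. Your two-point H\"older reduction is the paper's argument for real $q\ge 2$. After discretising $t$ at spacing $1/\mathcal{X}$ and using translation invariance, convexity of $y\mapsto y^{q-1}$ with weights $(|m|+1)^{-2}$ gives
\[
\Big(\sum_{|m|\le \mathcal{X}}|F_m|^2\Big)^{q-1}\le e^{O(q)}\sum_{|m|\le\mathcal{X}}(|m|+1)^{2q-4}|F_m|^{2(q-1)},\qquad F_m:=F_{k,f}(\tfrac12+\sigma+\tfrac{im}{\mathcal{X}}).
\]
Then \eqref{Eprod1} with $\alpha=1$, $\beta=q-1$ finishes; no barrier is needed.

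\textbf{The gap: non-integer $1<q<2$.} This range also contains the Rademacher transition point $q_0$, so the factor $1+\min\{\log\log x,1/|q-q_0|\}$ is produced exactly where your argument is missing.
\begin{itemize}
\item There $y\mapsto y^{q-1}$ is concave, so no H\"older step reduces the problem to two-point expectations. The best substitute is $(\sum_m a_m)^{q-1}\le\sum_m a_m^{q-1}$. Combined with $\E|F_0|^2|F_m|^{2(q-1)}\asymp\mathcal{X}^{q^2}(|m|+1)^{-2(q-1)}$ (up to $e^{O(q^2)}$ and powers of $\log 2q$), it gives $\mathcal{X}^{q^2+3-2q}$ instead of $\mathcal{X}^{q^2}$ for $q<3/2$. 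That is a power loss.
\item Interpolating between integer moments also loses a power $\mathcal{X}^{(q-1)(2-q)}$, because $q^2-q+1$ is convex in $q$.
\item Your barrier/Girsanov paragraph does not repair this. It is the low-moment device: on the barrier event the contribution only gets smaller, and all the mass of a $q>1$ moment sits on the complement. You dispose of the complement with ``Doob plus \eqref{Eprod1}'' without naming the submartingale or the maximum being controlled.
\end{itemize}

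\textbf{The missing ingredient.} What you need is Harper's multiscale argument, which the paper uses.
\begin{enumerate}
\item Set $C=e^{1/(q-1)}$ and $D=\lfloor 1/(q-1)\rfloor$, and split $m$ into shells $C^{d-1}\le|m|\le C^d$.
\item Factor $|F_m|^2=G_d(m)H_d(m)$, where $G_d$ is the Euler product over $p\le e^{\mathcal{X}/C^d}$. On these primes the shift $m/\mathcal{X}$ is essentially invisible.
\item Group the $d$'s into blocks of length $D$ and use
\[
|F_0|^2S_d^{q-1}=\big(G_d^qH_dC^{2(q-1)d}\big)^{2-q}\big(G_d^{q-1}H_dC^{-2(2-q)d}S_d\big)^{q-1},\qquad S_d:=\sum_{C^{d-1}\le|m|\le C^d}G_d(m)H_d(m).
\]
\item H\"older with exponents $1/(2-q)$ and $1/(q-1)$ then splits the problem into two parts. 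The first is a sum of factorised two-point terms $\E G_d^{q-1}G_d(m)\,\E H_dH_d(m)$, which \eqref{Eprod1} handles. The second is the maximal term $\E\max_{(r-1)D<d\le rD}G_d^qH_dC^{2(q-1)d}$.
\item The maximal term is controlled by tilting by $|F_0|^2$ and applying Doob's inequality (Proposition \ref{prop4}) in $L^{1.01}$ to the submartingale $L_d/\lambda_d$ under $\tilde{\p}$.
\end{enumerate}
This maximal inequality over scales is the idea your proposal lacks. The same scheme, with \eqref{EprodR}--\eqref{Eprod1R1}, is needed for Rademacher $1<q<2$.
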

\begin{proof}
 We set $\mathcal{X} := \min\{ \log x/e^{k+1}, 1/|\sigma| \}$, and note that we have $\mathcal{X} \geq 100\log(2q)$ under our hypotheses. As shown in the section ``Preliminary Maneuvres" of \cite{harper2020moments}, we see that for $q \geq 1$ and $h(n)$ being either a Steinhaus or a Rademacher random multiplicative function, 
\begin{eqnarray}
\Big\| \int\limits_{-1/2}^{1/2} |F_{k,f}(\tfrac12+\sigma + it)|^2 \dif t \Big\|_{q} \leq \frac{1}{\mathcal{X}} \Big( \int\limits_{-1/(2\mathcal{X})}^{1/(2\mathcal{X})} \mathcal{X} \; \E \Big(\sum_{|n| \leq \mathcal{X}/2 + 1} |F_{k,f}(\tfrac12+\sigma + i(\tfrac{n}{\mathcal{X}} + t))|^2 \Big)^q \dif t \Big)^{1/q} . \nonumber
\end{eqnarray}

If $h(n)$ is a Steinhaus random multiplicative function, the above further simplifies to 
\begin{align}
\label{upperq-1}
\begin{split} 
\Big\| \int\limits_{-1/2}^{1/2} |F_{k,f}(\tfrac12+\sigma + it)|^2 \dif t \Big\|_{q}  \ll \frac{1}{\mathcal{X}} \Big( \mathcal{X} \; \E |F_{k,f}(\tfrac{1}{2}+\sigma)|^2 \Big( \sum_{|m| \leq \mathcal{X}} |F_{k,f}(\tfrac{1}{2}+\sigma + \tfrac{im}{\mathcal{X}})|^2 \Big)^{q-1} \Big)^{1/q}.
\end{split}
\end{align}

Moreover, if $h(n)$ is a Rademacher random multiplicative function, then, for any integer $N$, 
\begin{align}
\label{radupperq-1}
\begin{split} 
 \Big\| & \int\limits_{N-1/2}^{N+1/2} |F_{k,f}(\tfrac{1}{2}+\sigma + it)|^2 \dif t \Big\|_{q} \\
\leq & \frac{1}{\mathcal{X}} \Big(\int\limits_{-1/(2\mathcal{X})}^{1/(2\mathcal{X})} \mathcal{X} \; \E \sum_{|n| \leq \mathcal{X}/2 + 1} |F_{k,f}(\tfrac{1}{2}+\sigma + i(\tfrac{n}{\mathcal{X}} + N + t))|^2  \Big( \sum_{|m| \leq \mathcal{X}/2 + 1} |F_{k,f}(\tfrac{1}{2}+\sigma + i(\frac{m}{\mathcal{X}} + N + t))|^2 \Big)^{q-1} \dif t \Big)^{1/q} . 
\end{split}
\end{align}

  We now proceed to prove \eqref{Ehalfq} by considering the cases $q \geq 2$ and $1\leq q<2$ separately.
\newline
\newline
{\it i) The case $q \geq 2$.}
\newline
\newline
  We argue as in the Proof of Key Proposition 5.1 for $q \geq 2$ in \cite{harper2020moments}.  If $q \geq 2$, we have for $h$ being Steinhaus, 
\begin{align}
\label{holderbigq-1} 
\begin{split} 
\E |F_{k,f}(\tfrac{1}{2}+\sigma)|^2 & \Big( \sum_{|m| \leq \mathcal{X}} |F_{k,f}(\tfrac{1}{2}+\sigma + \tfrac{im}{\mathcal{X}})|^2 \Big)^{q-1} \\
\leq &e^{O(q)} \sum_{|m| \leq \mathcal{X}} \frac{1}{(|m|+1)^2}\cdot (|m|+1)^{2(q-1)} \E |F_{k,f}( \tfrac12+\sigma)|^{2} |F_{k,f}(1/2+\sigma + \tfrac{im}{\mathcal{X}})|^{2(q-1)} .
\end{split}
\end{align}

 We now use the definition of $F_{k,f}(s)$ and apply the trivial bound 
 \[ \exp \Big( O\Big(\sum_{p \leq 100q^2} \frac{q}{p^{1/2+\sigma}} \Big) \Big) = \exp \Big( O\Big( \sum_{p \leq 100q^2} \frac{q}{\sqrt{p}} \Big) \Big)) = \exp \Big( O \Big(\frac{q^2}{\log q } \Big) \Big) \]
  in view of $- e^k/\log x \leq \sigma \leq 1/(100\log(2q))$ to see that
\begin{align}
\label{upperq-2}
\begin{split}  
&  (|m|+1)^{2(q-1)} \E |F_k(1/2+\sigma)|^{2} |F_{k,f}(1/2+\sigma + \frac{im}{\mathcal{X}})|^{2(q-1)} \\
& = \exp \Big( O \Big(\frac{q^2}{\log q } \Big) \Big)  (|m|+1)^{2(q-1)} \\
& \hspace*{0.5cm} \times \E \prod_{100q^2 < p \leq x^{e^{-(k+1)}}} \left|1 - \frac{\alpha_ph(p)}{p^{1/2+\sigma}}\right|^{-2}\left|1 - \frac{\beta_ph(p)}{p^{1/2+\sigma}}\right|^{-2}  \left|1 - \frac{\alpha_ph(p)}{p^{1/2+\sigma + im/\mathcal{X}}}\right|^{-2(q-1)}\left|1 - \frac{\beta_ph(p)}{p^{1/2+\sigma + im/\mathcal{X}}}\right|^{-2(q-1)} . 
\end{split}
\end{align}

 Note that when  $- e^k/\log x \leq \sigma \leq 1/(100\log(2q))$, we have $\mathcal{X} = \min\{ \log x/e^{k+1}, 1/|\sigma| \} = 1/\sigma$, then \eqref{alpha} and \eqref{Eprod} imply that the expectation of the part of the Euler product over primes $e^{1/\sigma} < p \leq x^{e^{-(k+1)}}$ equals to 
 \[ \exp\Big (O \Big(\sum_{e^{1/\sigma} < p \leq x^{e^{-(k+1)}}} \frac{q^2}{p^{1+2\sigma}} + \frac{q^3}{e^{1/(2\sigma)}} \Big) \Big)=e^{O(q^2)}. \]
 Otherwise, if $|\sigma| \leq e^{k+1}/\log x$, then $\mathcal{X} = \log x/e^{k+1}$. Thus in either case,  we deduce from the above discussions and the independence of $h(p)$ for different primes $p$ that for $- e^k/\log x \leq \sigma \leq 1/(100\log(2q))$,
\begin{align*}
\begin{split}  
 &  (|m|+1)^{2(q-1)} \E |F_k(\tfrac12+\sigma)|^{2} |F_{k,f}( \tfrac12+\sigma + \tfrac{im}{\mathcal{X}})|^{2(q-1)} \\
& = e^{O(q^{2})} (|m|+1)^{2(q-1)} \\
& \hspace*{1cm} \times \E \prod_{100q^2 < p \leq e^{\mathcal{X}}} \left|1 - \frac{\alpha_ph(p)}{p^{1/2+\sigma}}\right|^{-2}\left|1 - \frac{\beta_ph(p)}{p^{1/2+\sigma}}\right|^{-2}  \left|1 - \frac{\alpha_ph(p)}{p^{1/2+\sigma + im/\mathcal{X}}}\right|^{-2(q-1)}\left|1 - \frac{\beta_ph(p)}{p^{1/2+\sigma + im/\mathcal{X}}}\right|^{-2(q-1)} .
\end{split}
\end{align*}
Here we observe that our assumptions on $\sigma$, $k$ and $q$ imply that $e^{\mathcal{X}}>100q^2$ so that the product above is not empty. We then apply \eqref{Eprod1}, getting
\begin{align*}
\begin{split}  
 (|m|+1)^{2(q-1)} & \E |F_k(\tfrac12+\sigma)|^{2} |F_{k,f}(\tfrac12+\sigma + \tfrac{im}{\mathcal{X}})|^{2(q-1)} \\
& = e^{O(q^{2})} (|m|+1)^{2(q-1)} \Big(\frac{\mathcal{X}}{\log q} \Big)^{1 + (q-1)^2} \Big(1 + \frac{\mathcal{X}}{(|m|+1)\log q} \Big)^{2(q-1)} = e^{O(q^{2})} \Big(\frac{\mathcal{X}}{\log q} \Big)^{q^2} .
\end{split}
\end{align*}

 We apply the above with \eqref{upperq-1}, \eqref{holderbigq-1} and \eqref{upperq-2} to see that
\begin{align*}
\begin{split} 
\Big\| \int\limits_{-1/2}^{1/2} |F_{k,f}(\tfrac12+\sigma + it)|^2 \dif t \Big\|_{q}  
 \ll \frac{1}{\mathcal{X}} \Big( \mathcal{X} \cdot e^{O(q^2)} \sum_{|m| \leq \mathcal{X}} \frac{1}{(|m|+1)^2} \Big(\frac{\mathcal{X}}{\log q} \Big)^{q^2} \Big)^{1/q} = \frac{1}{\mathcal{X}} \Big( \mathcal{X} \cdot e^{O(q^2)} \Big(\frac{\mathcal{X}}{\log q} \Big)^{q^2} \Big)^{1/q} .
\end{split}
\end{align*}
 One checks that this readily leads to the estimation given in \eqref{Ehalfq} for the case of $h(n)$ being a Steinhaus random multiplicative function. \newline

We next proceed as in the Proof of Key Proposition 5.2 for $q \geq 2$ in \cite{harper2020moments} to see that for $h$ being Rademacher, when $q \geq 2$ and $|N| \geq 1$, we have for any $|n| \leq \mathcal{X}/2 + 1$ and any $|t| \leq 1/(2\mathcal{X})$, 
\begin{align*}
\begin{split} 
\Big( \sum_{|m| \leq \frac{\mathcal{X}}{2} + 1} |F_{k,f}(\tfrac12+\sigma + i(\tfrac{m}{\mathcal{X}} + N + t))|^2 \Big)^{q-1}   \leq  e^{O(q)} \sum_{|m| \leq \mathcal{X}/2 + 1 } \frac{(|m-n|+1)^{2(q-1)} }{(|m-n|+1)^2} |F_{k,f}(\tfrac12+\sigma + i(\tfrac{m}{\mathcal{X}} + N + t))|^{2(q-1)} .
\end{split}
\end{align*}
  In view of \eqref{radupperq-1}, we see that in order to establish \eqref{EhalfqNlargeR}, it remains to estimate terms of the form 
$$ (|m-n|+1)^{2(q-1)} \E |F_{k,f}(\tfrac{1}{2}+\sigma + i(\tfrac{n}{\mathcal{X}} + N + t))|^{2} |F_{k,f}(\tfrac{1}{2}+\sigma + i(\tfrac{m}{\mathcal{X}} + N + t))|^{2(q-1)} . $$
   We now use the definition of $F_{k,f}(s)$ to write it as an Euler product to see that, similar to the Steinhaus case, the contribution from primes $p \leq 100q^2$ to the expectation above is trivially $e^{O(q^{2}/\log q)}$. Also, by \eqref{alpha} and \eqref{EprodR}, the contribution from primes $e^{\mathcal{X}} < p \leq x^{e^{-(k+1)}}$ is $e^{O(q^2)}$. Moreover, observe that the imaginary shifts $\frac{n}{\mathcal{X}} + N + t, \frac{m}{\mathcal{X}} + N + t$ are $\asymp |N| \gg 1$, we then apply \eqref{Eprod1R}, \eqref{Eprod1R1} to treat the contribution from primes $100q^2 \leq p \leq e^{\mathcal{X}}$ to see that
\begin{align*}
\begin{split} 
& (|m-n|+1)^{2(q-1)} \E |F_{k,f}(\tfrac{1}{2}+\sigma + i(\tfrac{n}{\mathcal{X}} + N + t))|^{2} |F_{k,f}(\tfrac{1}{2}+\sigma + i(\tfrac{m}{\mathcal{X}} + N + t))|^{2(q-1)}\\
\ll & e^{O(q^2)} \min\Big\{\frac{\mathcal{X}}{\log q}, |N|^{\frac{1}{100}}\Big\}^{(q-1)^2 + 3(q-1)} (|m-n|+1)^{2(q-1)} \Big(\frac{\mathcal{X}}{\log q} \Big)^{1 + (q-1)^2} \Big(\frac{\mathcal{X}}{(|m-n|+1)\log q} \Big)^{2(q-1)} .
\end{split}
\end{align*}
 We then proceed as in the Steinhaus case to see that the estimation given in \eqref{EhalfqNlargeR} is valid. \newline

It remains to prove \eqref{Ehalfq} for the Rademacher case. For this, we apply H\"{o}lder's inequality as in the Steinhaus case.  For any $|t| \leq 1/(2\mathcal{X})$, 
\begin{align*}
\begin{split} 
& \sum_{|n| \leq \mathcal{X}/2 + 1} |F_{k,f}(\tfrac{1}{2}+\sigma + i(\tfrac{n}{\mathcal{X}} + t))|^2 \Big( \sum_{|m| \leq \mathcal{X}/2 + 1} |F_{k,f}( \tfrac12+\sigma + i(\tfrac{m}{\mathcal{X}} + t))|^2 \Big)^{q-1} \\
 \leq & e^{O(q)} \sum_{|n| \leq \mathcal{X}/2 + 1} |F_{k,f}(\tfrac{1}{2}+\sigma + i(\tfrac{n}{\mathcal{X}} + t))|^2  \cdot \sum_{|m| \leq \mathcal{X}/2 + 1} \frac{(|m|+1)^{2(q-1)}}{(|m|+1)^2} |F_{k,f}(\tfrac12+\sigma + i(\tfrac{m}{\mathcal{X}} + t))|^{2(q-1)} .
\end{split}
\end{align*}
  With the above estimation, we then apply \eqref{Eprod1R}, \eqref{Eprod1R1} again and follow the proof of Key Proposition 5.2 for $q \geq 2$ in \cite{harper2020moments} to see that the desired estimation given in \eqref{Ehalfq} is valid for the Rademacher case. 
\newline
\newline
{\it ii) The case $1 < q < 2$. }
\newline
\newline
 The proof of Rademacher case is similar to the Steinhaus case as shown in the Proof of Key Proposition 5.2 for $1<q<2$ in \cite{harper2020moments}. We will only consider the Steinhaus case here.  We set $C = C(q) = e^{1/(q-1)}$ and $D := \lfloor \frac{1}{q-1} \rfloor$.  For each $d \geq 1$ and $|m| \leq \mathcal{X}$, we set 
$$ G_{d}(m) := \prod_{p \leq e^{\mathcal{X}/C^d}}  \left|1 - \frac{\alpha_ph(p)}{p^{1/2+\sigma+im/\mathcal{X}}}\right|^{-2}\left|1 - \frac{\beta_ph(p)}{p^{1/2+\sigma+im/\mathcal{X}}}\right|^{-2} , \;\;\;\;\; \text{and} \;\;\;\;\; H_{d}(m) := \frac{|F_{k,f}(\frac{1}{2}+\sigma + \frac{im}{\mathcal{X}})|^2}{G_{d}(m)}, $$
 where we define an empty product to be $1$ throughout the paper. We also write for brevity $G_{d} = G_{d}(0)$ and $H_{d} = H_{d}(0)$.

 It is shown in the proof of Key Proposition 5.1 for $1< q < 2$ in \cite{harper2020moments} that we have 
\begin{align*}
\begin{split}  
  \E & |F_{k,f}(\tfrac{1}{2}+\sigma)|^2 \Big( \sum_{|m| \leq \mathcal{X}} |F_{k,f}(\tfrac{1}{2}+\sigma + \tfrac{im}{\mathcal{X}})|^2 \Big)^{q-1} \\
& \ll D^{q-1} \sum_{r \leq \frac{(q-1)\log\mathcal{X}}{D} + 1} \Big( \E \max_{(r-1)D < d \leq rD} G_{d}^{q} H_{d} C^{2(q-1)d} \Big)^{2-q} \Big( \sum_{(r-1)D < d \leq rD} \E \frac{G_{d}^{q-1} H_{d}}{C^{2(2-q)d}} \sum_{C^{d-1} \leq |m| \leq C^{d}} G_{d}(m) H_{d}(m) \Big)^{q-1} \\
& =  D^{q-1} \sum_{r \leq \frac{(q-1)\log\mathcal{X}}{D} + 1} \Big( \E \max_{(r-1)D < d \leq rD} G_{d}^{q} H_{d} C^{2(q-1)d} \Big)^{2-q} \\
& \hspace*{3cm} \times \Big( \sum_{(r-1)D < d \leq rD} \frac{1}{C^{2(2-q)d}} \sum_{C^{d-1} \leq |m| \leq C^{d}} \E G_{d}^{q-1}G_{d}(m) \E H_{d} H_{d}(m) \Big)^{q-1},
\end{split}
\end{align*}
 where the last relation above follows from the independence of $h(p)$ for different primes. Here we adopt the convention that the term $m=0$ is included in $\sum_{C^{d-1} \leq |m| \leq C^{d}}$ when $d=1$, and that any terms with $|m| > \mathcal{X}$ or  $C^{d-1} > \mathcal{X}$ are omitted from all sums since the corresponding sum is empty. \newline

  We now treat $\E G_{d}^{q-1}G_{d}(m) \E H_{d} H_{d}(m)$ by writing $G_{d}^{q-1}G_{d}(m)$ and $H_{d} H_{d}(m)$ into Euler products using their definitions, and then applying Proposition \ref{prop1} to estimate them.  In this process, we note that similar to our discussions on the case $q \geq 2$ above, we see that the products over $p \leq 100q^2$ and $p > e^{\mathcal{X}}$ are uniformly bounded. We then apply \eqref{Eprod1} to deal with the complementary products over $100q^2 <p \leq e^{\mathcal{X}}$. Analogue to what is shown in the proof of Key Proposition 5.2 for $1< q < 2$ in \cite{harper2020moments}, we obtain that
\begin{eqnarray}
 \sum_{(r-1)D < d \leq rD} \frac{1}{C^{2(2-q)d}} \sum_{C^{d-1} \leq |m| \leq C^{d}} \E G_{d}^{q-1}G_{d}(m) \E H_{d} H_{d}(m) \ll  C^{O(1)} \mathcal{X}^{q^2} C^{-(q-1)^{2}(r-1)D} .  \nonumber
\end{eqnarray}

 Next, we estimate $\E \max_{(r-1)D < d \leq rD} G_{d}^{q} H_{d} C^{2(q-1)d}$. To this end, we first use the definition of $G_{d}, H_{d}$.  This leads to
\begin{align*}
\begin{split}   
 \E \max_{(r-1)D < d \leq rD} & G_{d}^{q} H_{d} C^{2(q-1)d} \\
 & = \E |F_{k,f}(\tfrac{1}{2}+\sigma)|^{2} \cdot \tilde{\E} \max_{(r-1)D < d \leq rD} \prod_{p \leq e^{\frac{\mathcal{X}}{C^d}}}  \left|1 - \frac{\alpha_ph(p)}{p^{1/2+\sigma}}\right|^{-2(q-1)}\left|1 - \frac{\beta_ph(p)}{p^{1/2+\sigma}}\right|^{-2(q-1)} C^{2(q-1)d} , 
\end{split}
\end{align*}
where we define $\tilde{\E}$ to be the expectation under the measure $\tilde{\p}(A) = \frac{\E |F_{k,f}(1/2+\sigma)|^{2} \textbf{1}_{A}}{\E |F_{k,f}(1/2+\sigma)|^{2}}$ for each event $A$, and where $\textbf{1}$ denotes the indicator function. Here, as noted in the proof of Key Proposition 5.1 for $1< q < 2$ in \cite{harper2020moments}, the independence of the $h(p)$ means that if an event $A$ does not involve a particular prime, then the expectation of that term will factor out and cancel between the numerator and denominator. Similarly, the random variables $h(p)$ are still independent under the measure $\tilde{\p}$. \newline

  We now apply Proposition \ref{prop1} to evaluate $\E |F_{k,f}(\frac{1}{2}+\sigma)|^{2}$. Similar to our discussions above, apart from a constant factor,  we may restrict the Euler product involved over $100q^2 < p \leq e^{\mathcal{X}}$. We then apply \eqref{Eprod3} to see that $\E |F_{k,f}(\frac{1}{2}+\sigma)|^{2} \asymp \mathcal{X}$. We now set $\lambda_{d} := \tilde{\E} L_d$, where
$$L_{d} := \prod_{p \leq e^{\mathcal{X}/C^d}}  \left|1 - \frac{\alpha_ph(p)}{p^{1/2+\sigma}}\right|^{-2(q-1)/1.01}\left|1 - \frac{\beta_ph(p)}{p^{1/2+\sigma}}\right|^{-2(q-1)/1.01}.$$ 

  By the independence of the $h(p)$ and \eqref{Eprod3},
\begin{align*}
\begin{split}
\lambda_{d} = & \frac{\E \prod_{p \leq e^{\mathcal{X}/C^d}}  \left|1 - \frac{\alpha_p h(p)}{p^{1/2+\sigma}}\right|^{-2(1+(q-1)/1.01)}\left|1 - \frac{\beta_p h(p)}{p^{1/2+\sigma}}\right|^{-2(1+(q-1)/1.01)}}{\E \prod_{p \leq e^{\mathcal{X}/C^d}}  \left|1 - \frac{\alpha_p h(p)}{p^{1/2+\sigma}}\right|^{-2}\left|1 - \frac{\beta_p h(p)}{p^{1/2+\sigma}}\right|^{-2}} \asymp  \Big(1 + \frac{\mathcal{X}}{C^d} \Big)^{2(q-1)/1.01 + (q-1)^2/1.01^2}, \\
\tilde{\E} L_{d}^{1.01} \asymp & \Big(1 + \frac{\mathcal{X}}{C^d} \Big)^{2(q-1) + (q-1)^2}. 
\end{split}
\end{align*}
 We deduce from the above and argue as in the proof of Key Proposition 5.2 for $1< q < 2$ in \cite{harper2020moments} that
\begin{eqnarray}
\E \max_{(r-1)D < d \leq rD} G_{d}^{q} H_{d} C^{2(q-1)d} 
& \ll & \mathcal{X}^{1+2(q-1)} \Big(1 + \frac{\mathcal{X}}{C^{(r-1)D + 1}}\Big)^{(q-1)^2/1.01} \tilde{\E} \max_{(r-1)D < d \leq rD} \Big(\frac{L_d}{\lambda_{d}} \Big)^{1.01}. \nonumber
\end{eqnarray}
 Similar to what is shown in the proof of Key Proposition 5.1 for $1< q < 2$ in \cite{harper2020moments}, we see that the sequence of random variables $\left( L_{rD}/\lambda_{rD} \right), \left( L_{rD-1}/\lambda_{rD-1} \right), ..., \left( L_{(r-1)D+1}/\lambda_{(r-1)D+1} \right)$  form a non-negative submartingale relative to $\tilde{\p}$ and to the sigma algebras generated by $(h(p))_{p \leq e^{\mathcal{X}/C^{rD}}}, (h(p))_{p \leq e^{\mathcal{X}/C^{rD-1}}}, ..., (h(p))_{p \leq e^{\mathcal{X}/C^{(r-1)D+1}}}$. We may thus apply Proposition \ref{prop4} and follow the arguments in the proof of Key Proposition 5.2 for $1< q < 2$ in \cite{harper2020moments} to see that the expression given in \eqref{Ehalfq} is valid for $1 < q < 2$.  This concludes our proof of the proposition. 
\end{proof}

\subsection{Proof of \eqref{mainestimationlower}}
\label{secmainlower}

\subsubsection{Lower bounds, Steinhaus case}
\label{rmfiisteinhauslowermain}

We first apply Proposition \ref{propstlower} to prove \eqref{mainestimationlower} for the Steinhaus case.  Proceeding as in the proof of the lower bound in the Steinhaus case in \cite{harper2020moments} upon recalling that $F_f(s)$ denotes the Euler product of $h(n)$ over $x$-smooth numbers for the Steinhaus case, for $q \geq 1$, we get, writing $R = 1/(2 \log x)$ for convenience,
\begin{align*}
\begin{split}
 \E \Big( \int\limits_{- 1/2}^{1/2}  & |F_f( \tfrac12+ \tfrac{4Vq}{\log x} + it)|^2 \dif t \Big)^q \gg  \log x \cdot \E \Big( \int\limits_{-R}^{R} |F_f(\tfrac12+ \tfrac{4Vq}{\log x} + it)|^2 \dif t \Big)^q  \\
 \gg & \frac 1{\log^{q}x} \E \exp\Big(2q \int\limits_{- R}^{R} \log x \cdot \log|F_f(\tfrac12+\tfrac{4Vq}{\log x} + it)| \dif t \Big ) \\
& = \frac 1{\log^{q}x} \E \prod_{p \leq x} \exp\Big(-2q \int\limits_{- R}^{R} \log x \cdot \Re\log\Big(1 - \frac{\alpha_ph(p)}{p^{1/2+4Vq/\log x + it}}\Big) \dif t \\
& \hspace*{4cm} -2q \int\limits_{-R}^{R} \log x \cdot \Re\log\Big(1 - \frac{\beta_ph(p)}{p^{1/2+4Vq/\log x + it}}\Big) \dif t \Big) .
\end{split}
\end{align*}

 We then apply \eqref{alphalambdarel} and argue as in the proof of the lower bound in the Steinhaus case in \cite{harper2020moments} to see that
\begin{align}
\label{stlowerstep1}
\begin{split}
\E \Big( \int\limits_{- 1/2}^{1/2} |F_f(\tfrac12+ \tfrac{4Vq}{\log x} + it)|^2 \dif t \Big)^q 
\geq  \frac{e^{O(q)}}{\log^{q-1}x} \prod_{p \leq x} \E \exp\Big(2q \Re \Big( \frac{\lambda(p)h(p)}{p^{1/2+4Vq/\log x}} \log x \int\limits_{- R}^{R} e^{-it\log p} dt + \frac{(\alpha^2_p+\beta^2_p)h(p)^2}{2 p^{1+8Vq/\log x}} \Big)\Big).
\end{split}
\end{align}

  We further proceed as in the proof of the lower bound in the Steinhaus case in \cite{harper2020moments} with the observation that $\E (\Re h(p))^2 = 1/2$, arriving at 
\begin{align*}
\begin{split}
 \E \exp\Big(2q & \Re \Big( \frac{\lambda(p)h(p)}{p^{1/2+4Vq/\log x}} \log x \int\limits_{- R}^{R} e^{-it\log p} \dif t + \frac{(\alpha^2_p+\beta^2_p)h(p)^2}{2 p^{1+8Vq/\log x}} \Big)\Big)  \\
= &
\begin{cases}
  \exp\Big( O\Big(\frac{q}{\sqrt{p}}\Big) \Big),  \quad & p < 100q^2, \\
 1 + \frac{\lambda^2(p)q^2}{p^{1+8Vq/\log x}} + O\Big(\frac{q^2 \log p}{p^{1 + 8Vq/\log x} \log x} + \frac{q^3}{p^{3/2}}\Big), \quad & 100q^2 \leq p \leq x. 
\end{cases}
\end{split}
\end{align*}

 We apply the above estimation in \eqref{stlowerstep1} to see that
\begin{align}
\label{Elower}
\begin{split}
\E \Big( \int\limits_{- 1/2}^{1/2} & |F_f( \tfrac12+\tfrac{4Vq}{\log x} + it)|^2 \dif t \Big)^q  \\
& \geq \frac{e^{O(q)}}{\log^{q-1}x} \prod_{p < 100q^2} \exp \Big( O\Big(\frac{q}{\sqrt{p}}\Big) \Big) \prod_{100q^2 \leq p \leq x} \exp \Big(\frac{\lambda^2(p)q^2}{p^{1+8Vq/\log x}} + O\Big(\frac{q^2 \log p}{p^{1 + 8Vq/\log x} \log x} + \frac{q^3}{p^{3/2}} \Big) \Big)  \\
 & = \frac{e^{O(q^{2}/\log(2q))}}{\log^{q-1}x} \prod_{100q^2 \leq p \leq x} \exp \Big(\frac{\lambda^2(p)q^2}{p^{1+8Vq/\log x}} \Big). 
\end{split}
\end{align}

Recasting the last product as
\begin{align}
\label{Eprodpmiddlrange}
\begin{split}
& \prod_{100q^2 \leq p \leq x} \exp \Big(\frac{\lambda^2(p)q^2}{p^{1+8Vq/\log x}} \Big)=\exp \Big (q^2\sum_{p} \frac{\lambda^2(p)}{p^{1+8Vq/\log x}} -q^2\sum_{ p \leq 100q^2} \frac{\lambda^2(p)}{p^{1+8Vq/\log x}} + q^2\sum_{p > x} \frac{\lambda^2(p)}{p^{1+8Vq/\log x}} \Big).
\end{split}
\end{align}
 
By \eqref{lambdabound}, \eqref{merten} and partial summation, we get
\begin{align}
\label{Eprodplarge}
\begin{split}
& -q^2\sum_{ p >x} \frac{\lambda^2(p)}{p^{1+8Vq/\log x}}=O\Big(q^2\sum_{ p >x} \frac{1}{p^{1+8Vq/\log x}}\Big)=O(1).
\end{split}
\end{align}

  Observe also that by \eqref{merten} and \eqref{merten1},
\begin{align*}
\begin{split}
& -q^2\sum_{ p \leq 100q^2} \frac{\lambda^2(p)}{p^{1+8Vq/\log x}}=-q^2\sum_{ p \leq 100q^2} \frac{\lambda^2(p)}{p}+O\Big(\sum_{ p \leq 100q^2} \frac{q^2\log p}{p\log x}\Big)=-q^2 \log \log q+O(q^2).
\end{split}
\end{align*}
 It follows that
\begin{align}
\label{Eprodsmall}
\begin{split}
& \exp\left (-q^2\sum_{ p \leq 100q^2} \frac{\lambda^2(p)}{p^{1+8Vq/\log x}} \right )=\frac{e^{O(q^2)}}{\log^{q^2}(2q)}.
\end{split}
\end{align}

  We next deduce from \eqref{Fsdef} and \eqref{Fsdecomp} that
\begin{align}
\label{Eprodall}
\begin{split}
 \exp\left (q^2\sum_{p} \frac{\lambda^2(p)}{p^{1+8Vq/\log x}} \right )=& e^{O(q^2)}F^{q^2}(1+8Vq/\log x) \\
 & =e^{O(q^2)}\frac {\zeta^{q^2}(1+8Vq/\log x) L^{q^2}(1+8Vq/\log x, \operatorname{sym}^2 f)}{\zeta^{q^2}(2(1+8Vq/\log x))} = e^{O(q^2)} \Big(\frac{\log x}{Vq}\Big)^{q^2}.
\end{split}
\end{align}
 where the last equality above follows from \eqref{Lsymbound} and \eqref{zeta2sbound1}.  Applying \eqref{Eprodpmiddlrange}--\eqref{Eprodall} in \eqref{Elower}, we obtain
\begin{align*}
\begin{split}
\E \Big( \int\limits_{- 1/2}^{1/2} |F_f( \tfrac12+\tfrac{4Vq}{\log x} + it)|^2 \dif t \Big)^q  
 \geq   \frac{e^{O(q^{2})}}{\log^{q-1}x} \Big(\frac{\log x}{Vq \log(2q)}\Big)^{q^2} . 
\end{split}
\end{align*}

  Substituting the above into \eqref{hlambdalower1} allows us to obtain that
$$ \Big\| \sum_{n \leq x} h(n)\lambda(n) \Big\|_{2q} \gg \sqrt{\frac{x}{\log x}} \Big(\frac{e^{O(q)}}{\log^{(q-1)/2q}x} \Big(\frac{\log x}{Vq \log(2q)} \Big)^{q/2}  - \frac{C}{e^{Vq/2}} \Big\| \int\limits_{-1/2}^{1/2} |F_f(1/2 + \frac{2Vq}{\log x} + it)|^2 \dif t \Big\|_{q}^{1/2} \Big) . $$

  We then apply \eqref{Ehalfq} with $k=-1$ to deduce that, when $2Vq/\log x \leq 1/(100\log(2q))$, we have
$$ \Big\| \sum_{n \leq x} h(n)\lambda(n) \Big\|_{2q} \gg \sqrt{\frac{x}{\log x}} \Biggl(\frac{e^{O(q)}}{\log^{(q-1)/2q}x} \left(\frac{\log x}{Vq \log(2q)}\right)^{q/2}  - \frac{C e^{O(q)}}{e^{Vq/2}} \frac{(Vq)^{(q-1)/2q}}{\log^{(q-1)/2q}x} \left(\frac{\log x}{Vq \log(2q)}\right)^{q/2} \Biggr) . $$
  We then set $V$ to be a sufficiently large fixed constant so that, the subtracted term will be negligible compared with the first term, and our Theorem 1 lower bound will be proved. It only remains to note that the condition $2Vq/\log x \leq 1/(100\log(2q))$ is then satisfied provided $q \leq c\log x/\log\log x$, for a sufficiently small fixed constant $c > 0$. 

\subsubsection{Lower bounds for the Rademacher case}

   We now apply \eqref{hlambdalower2} to prove \eqref{mainestimationlower} for the Rademacher case. For this, we proceed as in the proof of the lower bound in the Rademacher case in \cite{harper2020moments} upon recalling that $F_f(s)$ denotes the Euler product of $h(n)$ over $x$-smooth numbers for the Rademacher case to see that for $q \geq 1$ we have, writing $R=1/(2 \log x)$ as before, $R_1 = (k-1/2)/\log x$ and $R_2 = (k+1/2)/\log x$
\begin{align}
\label{radlowerstep1}
\begin{split}
& \E\Big( \int\limits_{- 1/2}^{1/2} |F_f(1/2+ \frac{4Vq}{\log x} + it)|^2 \dif t \Big)^q \gg \frac 1{\log^{q}x} \E \exp\Big(2q \int\limits_{- R}^{R} \log x \cdot \log|F_f(1/2+\frac{4Vq}{\log x} + it)| \dif t \Big )   \\
 = & \frac{1}{\log^{q}x} \sum_{|k| \leq \frac{\log x - 1}{2}} \prod_{p \leq x} \E \exp\Big( 2q \int\limits_{R_1}^{R_2} \log x \cdot \Re \log\Big( 1 + \frac{\lambda(p)h(p)}{p^{1/2+4Vq/\log x + it}} \Big) \dif t \Big)  \\
 = & \frac{1}{\log^{q}x} \sum_{|k| \leq \frac{\log x - 1}{2}} \prod_{p \leq x} \E \exp\Big(2q \int\limits_{R_1}^{R_2} \log x \cdot \Big( \frac{\lambda(p)h(p) \cos(t\log p)}{p^{1/2+4Vq/\log x}} - \frac{\lambda^2(p)\cos(2t\log p)}{2p^{1+8Vq/\log x}} + O\Big(\frac{1}{p^{3/2}} \Big) \Big) \dif t \Big) \\
\geq & \frac{e^{O(q)}}{\log^{q}x} \sum_{|k| \leq \frac{\log x - 1}{2}} \prod_{p \leq x} \E \exp\Big(2q \Big( \frac{\lambda(p)h(p)}{p^{1/2+4Vq/\log x}} \log x \int\limits_{R_1}^{R_2} \cos(t\log p) \dif t - \frac{\lambda^2(p)\cos(\frac{2k\log p}{\log x})}{2 p^{1+8Vq/\log x}} \Big)\Big).
\end{split}
\end{align}

Next, when $100q^2 \leq p \leq x$ we have, mindful the Taylor expansion of the exponential (and the fact that $h(p)^2 \equiv 1$), that
\begin{equation*}
\begin{split}
 \E \exp\Big( 2q & \Big( \frac{\lambda(p)h(p)}{p^{1/2+4Vq/\log x}} \log x \int\limits_{R_1}^{R_2} \cos(t\log p) \dif t - \frac{\lambda^2(p)\cos(\frac{2k\log p}{\log x})}{2 p^{1+8Vq/\log x}} \Big)\Big) \\
& =  \E \Big( 1 + 2q \Big( \frac{\lambda(p)h(p)}{p^{1/2+4Vq/\log x}} \log x \int\limits_{R_1}^{R_2} \cos(t\log p) \dif t - \frac{\lambda^2(p)\cos(\frac{2k\log p}{\log x})}{2 p^{1+8Vq/\log x}} \Big) + \frac{2q^2 \lambda^2(p)\cos^2(\frac{k\log p}{\log x})}{p^{1 + 8Vq/\log x}}  \\
& \hspace*{2cm} + O\Big(\frac{q^2 \log p}{p^{1 + 8Vq/\log x} \log x} + \frac{q^3}{p^{3/2}} \Big) \Big) . 
\end{split}
\end{equation*}
Using the cosine identity $\cos^2 \theta = \tfrac12(1 + \cos( 2 \theta))$, and the fact that $\E h(p) = 0$, we find the above equals
$$ = 1 + \frac{\lambda^2(p)(q^2 + (q^2 - q)\cos(\frac{2k\log p}{\log x}))}{p^{1+8Vq/\log x}} + O\Big(\frac{q^2 \log p}{p^{1 + 8Vq/\log x} \log x} + \frac{q^3}{p^{3/2}}\Big) . $$
When $p < 100q^2$, we shall instead use the trivial bound $\exp( O(q/\sqrt{p}) )$. Inserting these into \eqref{radlowerstep1}, we get
\begin{align}
\label{ElowerR}
\begin{split}  \E\left( \int\limits_{- 1/2}^{1/2} |F_f(\tfrac12+\tfrac{4Vq}{\log x} + it)|^2 \dif t \right)^q \geq \frac{e^{O(\frac{q^{2}}{\log(2q)})}}{\log^{q}x} \sum_{|k| \leq \frac{\log x - 1}{2}} \prod_{100q^2 \leq p \leq x} \exp\Big( \frac{\lambda^2(p)(q^2 + (q^2 - q)\cos(\frac{2k\log p}{\log x}))}{p^{1+8Vq/\log x}} \Big ) . \end{split}
\end{align}

For $2 \leq q \leq c\log x/\log\log x$, we keep only the $k=0$ term on the right-hand side above and then argue as in \eqref{Eprodall} to get 
\begin{align*}
\begin{split} 
\E \Big( \int\limits_{- 1/2}^{1/2} |F_f( \tfrac12+\tfrac{4Vq}{\log x} + it)|^2 \dif t \Big)^q \geq \frac{e^{O(\frac{q^{2}}{\log(2q)})}}{\log^{q}x} \prod_{100q^2 \leq p \leq x} \exp \Big( \frac{\lambda^2(p)(2q^2 - q)}{p^{1+8Vq/\log x}} \Big ) = \frac{e^{O(q^{2})}}{\log^{q}x} \Big(\frac{\log x}{Vq \log(2q)}\Big)^{2q^2 - q}.
\end{split}
\end{align*}
 
  Inserting this into \eqref{hlambdalower2}, and applying \eqref{Ehalfq} with $k=-1$ and $\sigma = 2Vq/\log x$ to control the subtracted term there, we find that
$$ \Big\| \sum_{n \leq x} h(n)\lambda(n) \Big\|_{2q} \gg \sqrt{\frac{x}{\log x}} \Big(\frac{e^{O(q)}}{\log^{1/2}x} \Big(\frac{\log x}{Vq \log(2q)}\big)^{q - 1/2}  - \frac{C e^{O(q)}}{e^{Vq/2}} \frac{(Vq)^{1/2}}{\log^{1/2}x} \Big(\frac{\log x}{Vq \log(2q)}\Big)^{q - 1/2} \Big) . $$
If $V$ is a sufficiently large constant, the subtracted term is negligible compared with the first term and this leads to the validity of \eqref{mainestimationlower} for $2 \leq q \leq c\log x/\log\log x$. \newline

When $1 \leq q <2$, we first apply \eqref{merten1} and partial summation to see that
\begin{eqnarray*}
\sum_{100q^2 \leq p \leq x} \frac{\lambda^2(p)\cos(\frac{2k\log p}{\log x})}{p^{1+8Vq/\log x}} & = & \sum_{2 \leq p \leq x^{1/V}} \frac{\lambda^2(p)\cos(\frac{2k\log p}{\log x})}{p} + O(1) = \int\limits_{\log 2}^{\log(x^{1/V})} \frac{\cos(\frac{2k}{\log x} u)}{u} \dif u + O(1) \\
& = & \log\min\{ \log(x^{1/V}) , \log(x^{1/(1+|k|)}) \} + O(1) .
\end{eqnarray*}
  
  We apply the above estimation in \eqref{ElowerR} and argue as the case $2 \leq q \leq \frac{c\log x}{\log\log x}$ to see that
\begin{align*}
\begin{split} 
\E\Big( \int\limits_{- 1/2}^{1/2} |F_f(\tfrac12+\tfrac{4Vq}{\log x} + it)|^2 \dif t \Big)^q \gg & \frac{1}{\log^{q}x} \Big(\frac{\log x}{V}\Big)^{q^2} \sum_{|k| \leq \frac{\log x - 1}{2}} \min \Big\{ \frac{\log x}{V}, \frac{\log x}{1+|k|} \Big\}^{q^2 - q} \\
\gg & \frac{1}{\log^{q}x} \Big(\frac{\log x}{V} \Big)^{q^2 + \max\{1,q^{2}-q\}} \min\Big\{\log\log x, \frac{1}{|q-q_0|} \Big\}.
\end{split}
\end{align*}
Again, inserting this in \eqref{hlambdalower2} leads to the validity of \eqref{mainestimationlower} for $1 \leq q <2$.

\vspace*{.5cm}

\noindent{\bf Acknowledgments.} P. G. is supported in part by NSFC grant 12471003 and L. Z. by the FRG Grant PS71536 at the University of New South Wales.

\bibliography{biblio}
\bibliographystyle{amsxport}

\end{document}